\pgfplotsset{compat=1.5}
\newcounter{extralabel}[section]
\newtheorem{ittheorem}{Theorem}
\newtheorem{itlemma}{Lemma}
\newtheorem{itproposition}{Proposition}
\newtheorem{itdefinition}{Definition}
\newtheorem{itcorollary}{Corollary}
\newtheorem{itconjecture}{Conjecture}
\newtheorem{itremark}{Remark}
\newtheorem{itassumption}{Assumption}
\newenvironment{theorem}{\addtocounter{extralabel}{1}
	\begin{ittheorem}}{\end{ittheorem}}
\newenvironment{proposition}{\addtocounter{extralabel}{1}
	\begin{itproposition}}{\end{itproposition}}
\newenvironment{definition}{\addtocounter{extralabel}{1}
	\begin{itdefinition}}{\end{itdefinition}}
\newenvironment{corollary}{\addtocounter{extralabel}{1}
	\begin{itcorollary}}{\end{itcorollary}}
\newenvironment{remark}{\addtocounter{extralabel}{1}
	\begin{itremark}}{\end{itremark}}
\newenvironment{assumption}{\addtocounter{extralabel}{1}
	\begin{itassumption}}{\end{itassumption}}
\def\1{{\mathchoice {1\mskip-4mu\mathrm l} 
		{1\mskip-4mu\mathrm l}
		{1\mskip-4.5mu\mathrm l} {1\mskip-5mu\mathrm l}}}
\def\1{{\mathchoice {\rm 1\mskip-4mu l} {\rm 1\mskip-4mu l}
		{\rm 1\mskip-4.5mu l} {\rm 1\mskip-5mu l}}}
\def\N{{\mathbb N}}
\def\Z{{\mathbb Z}}
\def\R{{\mathbb R}}
\newcommand{\E}{{\mathbb E}}
\renewcommand{\P}{{\mathbb P}}
\newcommand{\dd}{{\rm d}}
\newcommand{\ee}{{\rm e}}
\newcommand{\rhosi}{{\rho_i^{\rm{stat}}}}
\newcommand{\rhosz}{{\rho_0^{\rm{stat}}}}
\newcommand{\rhoso}{{\rho_1^{\rm{stat}}}}
\newcommand{\rhoho}{{\rho_1^{\rm{hom}}}}
\newcommand{\rhohz}{{\rho_0^{\rm{hom}}}}
\begin{document}
	
	
	\title{Switching interacting particle systems:\\[0.2cm]
		scaling limits, uphill diffusion and boundary layer}
	
	\author{Simone Floreani$^1$, Cristian Giardin\`{a}$^2$, Frank den Hollander$^3$,\\ 
		Shubhamoy Nandan$^4$, Frank Redig$^5$}
	
	\date{\today}
	
	\maketitle
	
	\begin{abstract}
		In this paper we consider three classes of interacting particle systems on $\Z$: independent random walks, the exclusion process, and the inclusion process. We allow particles to switch their jump rate (the rate identifies the \textit{type} of particle) between $1$ (\textit{fast particles}) and $\epsilon\in[0,1]$ (\textit{slow particles}). 
		The switch between the two jump rates happens at rate $\gamma\in(0,\infty)$. In the exclusion process, the interaction is such that each site can be occupied by at most one particle of each type. In the inclusion process, the interaction takes places between particles of the same type at different sites and between particles of different type at the same site.
		
		We derive the macroscopic limit equations for the three systems, obtained after scaling space by $N^{-1}$, time by $N^2$, the switching rate by $N^{-2}$, and letting $N\to\infty$. The limit equations for the macroscopic densities associated to the fast and slow particles is the well-studied double diffusivity model. This system of reaction-diffusion equations was introduced to model polycrystal diffusion and dislocation pipe diffusion, with the goal to overcome the limitations imposed by Fick's law. In order to investigate the microscopic out-of-equilibrium properties, we analyse the system on $[N]=\{1,\ldots,N\}$, adding boundary reservoirs at sites $1$ and $N$ of fast and slow particles, respectively. Inside $[N]$ particles move as before, but now particles are injected and absorbed at sites $1$ and $N$ with prescribed rates that depend on the particle type. We compute the steady-state density profile and the steady-state current. It turns out that uphill diffusion is possible, i.e., the total flow can be in the direction of increasing total density. This phenomenon, which cannot occur in a single-type particle system, is a violation of Fick's law made possible by the switching between types. We rescale the microscopic steady-state density profile and steady-state current and obtain the steady-state solution of a boundary-value problem for the double diffusivity model.

		\medskip\noindent
		\emph{Keywords:} Switching random walks, fast and slow particles, duality, scaling limits, uphill diffusion, Fick's law.
		
		\medskip\noindent
		\emph{MSC2020:} 
		Primary: 
		60J70 
		60K35. 
		Secondary: 
		82C26 
		92D25. 
		
		\medskip\noindent
		\emph{Acknowledgement:} The research in this paper was supported by the Netherlands Organisation for Scientific Research (NWO) through grant TOP1.17.019. S.F. thanks Antonio Agresti for several enlightening discussions and Mark Veraar for useful suggestions.
	\end{abstract}
	
	\bigskip
	
	\footnoterule
	\noindent
	\hspace*{0.3cm} {\footnotesize $^{1)}$ 
		Delft Institute of Applied Mathematics, TU Delft, Delft, The Netherlands,
		{\sc s.floreani@tudelft.nl}}\\ 
	\hspace*{0.3cm} {\footnotesize $^{2)}$ 
		Modena and Reggio Emilia University, Modena, Italy, 
		{\sc cristian.giardina@unimore.it}}\\
	\hspace*{0.3cm} {\footnotesize $^{3)}$ 
		Mathematical Institute, Leiden University, Leiden, The Netherlands, 
		{\sc denholla@math.leidenuniv.nl}}\\
	\hspace*{0.3cm} {\footnotesize $^{4)}$ 
		Mathematical Institute, Leiden University, Leiden, The Netherlands,
		{\sc s.nandan@math.leidenuniv.nl}}\\
	\hspace*{0.3cm} {\footnotesize $^{5)}$ 
		Delft Institute of Applied Mathematics, TU Delft, Delft, The Netherlands,
		{\sc f.h.j.redig@tudelft.nl}}

	
	
	\section{Introduction}
	\label{s.intro}
	
	Section~\ref{ss.back} provides the background and the motivation for the paper. Section~\ref{ss.model} defines the model. Section~\ref{ss.dual} identifies the dual and the stationary measures. Section~\ref{ss.outline} gives a brief outline of the remainder of the paper. 
	
	
	\subsection{Background and motivation}
	\label{ss.back} 
	
	Interacting particle systems are used to model and analyse properties of \emph{non-equilibrium systems}, such as macroscopic profiles, long-range correlations and macroscopic large deviations. Some models have additional structure, such as duality or integrability properties, which allow for a study of the fine details of non-equilibrium steady states, such as microscopic profiles and correlations. Examples include zero-range processes, exclusion processes, and models that fit into the algebraic approach to duality, such as inclusion processes and related diffusion processes, or models of heat conduction, such as the Kipnis-Marchioro-Presutti model \cite{CGGR13,DLS02,DEHP93,GKRV09,KMP82}. Most of these models have indistinguishable particles of which the total number is conserved, and so the relevant macroscopic quantity is the \emph{density} of particles.
	
	Turning to more complex models of non-equilibrium, various exclusion processes with \emph{multi-type particles} have been studied \cite{FM06,FM07,K2019}, as well as reaction-diffusion processes \cite{BL07,BDP92,DMP91,DFL85,DFL86}, where non-linear reaction-diffusion equations are obtained in the hydrodynamic limit, and large deviations around such equations have been analysed. In the present paper, we focus on a reaction-diffusion model that on the one hand is simple enough so that via duality a complete microscopic analysis of the non-equilibrium profiles can be carried out, but on the other hand exhibits interesting phenomena, such as \emph{uphill diffusion} and \emph{boundary-layer effects}. In our model we have two types of particles, \textit{fast} and \textit{slow}, that jump at rate $1$ and $\epsilon\in[0,1]$, respectively. Particles of identical type are allowed to interact via exclusion or inclusion. There is no interaction between particles of different type that are at different sites. Each particle can change type at a rate that is adapted to the particle interaction (exclusion or inclusion), and is therefore interacting with particles of different type at the same site. An alternative and equivalent view is to consider two layers of particles, where the layer determines the jump rate (rate $1$ for bottom layer, rate $\epsilon$ for top layer) and where on each layer the particles move according to exclusion or inclusion, and to let particles change layer at a rate that is appropriately chosen in accordance with the interaction. In the limit as $\epsilon\downarrow0$, particles are immobile on the top layer. 
	
	We show that the \emph{hydrodynamic limit} of all three dynamics is a linear reaction-diffusion system known under the name of \textit{double diffusivity model}, namely, 
	\begin{equation}
	\begin{cases}
	\partial_t \rho_0 = \Delta \rho_0 + \Upsilon(\rho_1-\rho_0),\\
	\partial_t \rho_1 = \epsilon \Delta \rho_1 + \Upsilon(\rho_0-\rho_1),
	\end{cases}
	\end{equation}
	where $\rho_i$, $i\in\{0,1\}$, are the macroscopic densities of the two types of particles, and $\Upsilon\in(0,\infty)$ is the scaled switching rate. The above system was introduced in \cite{A79} to model polycrystal diffusion (more generally, diffusion in inhomogeneous porous media) and dislocation pipe diffusion, with the goal to overcome the restrictions imposed by Fick's law. Non-Fick behaviour is immediate from the fact that the total density $\rho=\rho_0+\rho_1$ does not satisfy the classical diffusion equation.
	
	The double diffusivity model was studied extensively in the PDE literature \cite{HA80I,HA80II,H81}, while its discrete counterpart was analysed in terms of a single random walk switching between two layers \cite{H80}. The same macroscopic model was studied independently in the mathematical finance literature in the context of switching diffusion processes \cite{YZ10}. Thus, we have a family of interacting particle systems whose macroscopic limit is relevant in several contexts. Another context our three dynamics fit into are models of interacting active random walks with an internal state that changes randomly (e.g.\ activity, internal  energy) and that determines their diffusion rate and or drift \cite{DM18,FM2018,GPB16,KSS86, MJKKSMRS18,PKS16, ABBS21}. 
	
	An additional motivation to study two-layer models comes from population genetics. Individuals live in colonies, carry different genetics types, and can be either active or dormant. While active, individuals resample by adopting the type of a randomly sampled individual in the same colony, and migrate between colonies by hopping around. Active individuals can become dormant, after which they suspend resampling and migration, until they become active again. Dormant individuals reside in what is called a \emph{seed bank}. The overall effect of dormancy is that extinction of types is slowed down, and so genetic diversity is enhanced by the presence of the seed bank. A wealth of phenomena can occur, depending on the parameters that control the rates of resampling, migration, falling asleep and waking up \cite{BKpr,GdHOpr}. Dormancy not only affects the long-term behaviour of the population quantitatively. It may also lead to qualitatively different equilibria and time scales of convergence. For a panoramic view on the role of dormancy in the life sciences, we refer the reader to \cite{LdHWBBpr}.
	
	From the point of view of non-equilibrium systems driven by boundary reservoirs, switching interacting particle systems have not been studied. On the one hand, such systems have both reaction and diffusion and therefore exhibit a richer non-equilibrium behaviour. On the other hand, the macroscopic equations are linear and exactly solvable in one dimension, and so these systems are simple enough to make a detailed microscopic analysis possible. As explained above, the system can be viewed as an interacting particle system on two layers. Therefore duality properties are available, which allows for a detailed analysis of the system coupled to reservoirs, dual to an absorbing system. In one dimension the analysis of the microscopic density profile reduces to a computation of the absorption probabilities of a simple random walk on a two-layer system absorbed at the left and right boundaries. From the analytic solution, we can identify both the density profile and the current in the system. This leads to two interesting phenomena. The first phenomenon is \emph{uphill diffusion} (see e.g.\ \cite{CMP18, CGGV17, CDP, DMP21, R15}), i.e., in a well-defined parameter regime the current can go against the particle density gradient: when the total density of particles at the left end is higher than at the right end, the current can still go from right to left. The second phenomenon is \emph{boundary-layer behaviour}: in the limit as $\epsilon\downarrow0$, in the macroscopic stationary profile the densities in the top and bottom layer are equal, which for unequal boundary conditions in the top and bottom layer results in a \emph{discontinuity} in the stationary profile. Corresponding to this jump in the macroscopic system,  we identify a boundary layer of size $\sqrt{\epsilon}\,\log(1/\epsilon)$ in the microscopic system where the densities are unequal. The quantification of the \emph{size} of this boundary layer is an interesting corollary of the exact macroscopic stationary profile that we obtain from the microscopic system  via duality.
	
	
	\subsection{Three models}
	\label{ss.model}
	
	For $\sigma\in\{-1,0,1\}$ we introduce an interacting particle system on $\Z$ where the particles randomly switch their jump rate between two possible values, $1$ and $\epsilon$, with $\epsilon\in[0,1]$. For $\sigma=-1$ the particles are subject to the exclusion interaction, for $\sigma=0$ the particles are independent, while for $\sigma=1$ the particles are subject to the inclusion interaction. Let
	$$
	\begin{aligned}
	\eta_0(x) &:= \text{number of particles at site } x \text{ jumping at rate } 1,\\
	\eta_1(x) &:= \text{number of particles at site } x \text{ jumping at rate } \epsilon.
	\end{aligned}
	$$
	The configuration of the system is 
	$$
	\eta := \{\eta(x)\}_{x\in \Z} \in \mathcal X 
	= \begin{cases} 
	\{0,1\}^\Z\times \{0,1\}^\Z,
	&\text{if }\sigma=-1,\\ 
	\N_0^{\Z}\times \N_0^{\Z},
	&\text{if }\sigma=0,1,
	\end{cases}
	$$
	where 
	$$
	\eta(x) := (\eta_0(x),\eta_1(x)), \qquad x \in \Z.
	$$ 
	We call $\eta_0=\{\eta_0(x)\}_{x\in \Z}$ and $\eta_1=\{\eta_1(x)\}_{x\in \Z}$ the configurations of \textit{fast particles}, respectively, \textit{slow particles}. When $\epsilon = 0$ we speak of  \textit{dormant particles} (see Fig.~\ref{fig:sleepingnew}). 
	
	\begin{figure}[htbp]
		\begin{subfigure}{\linewidth}	
			\centering
			\includegraphics[width=0.6\linewidth,height=0.2\textheight]{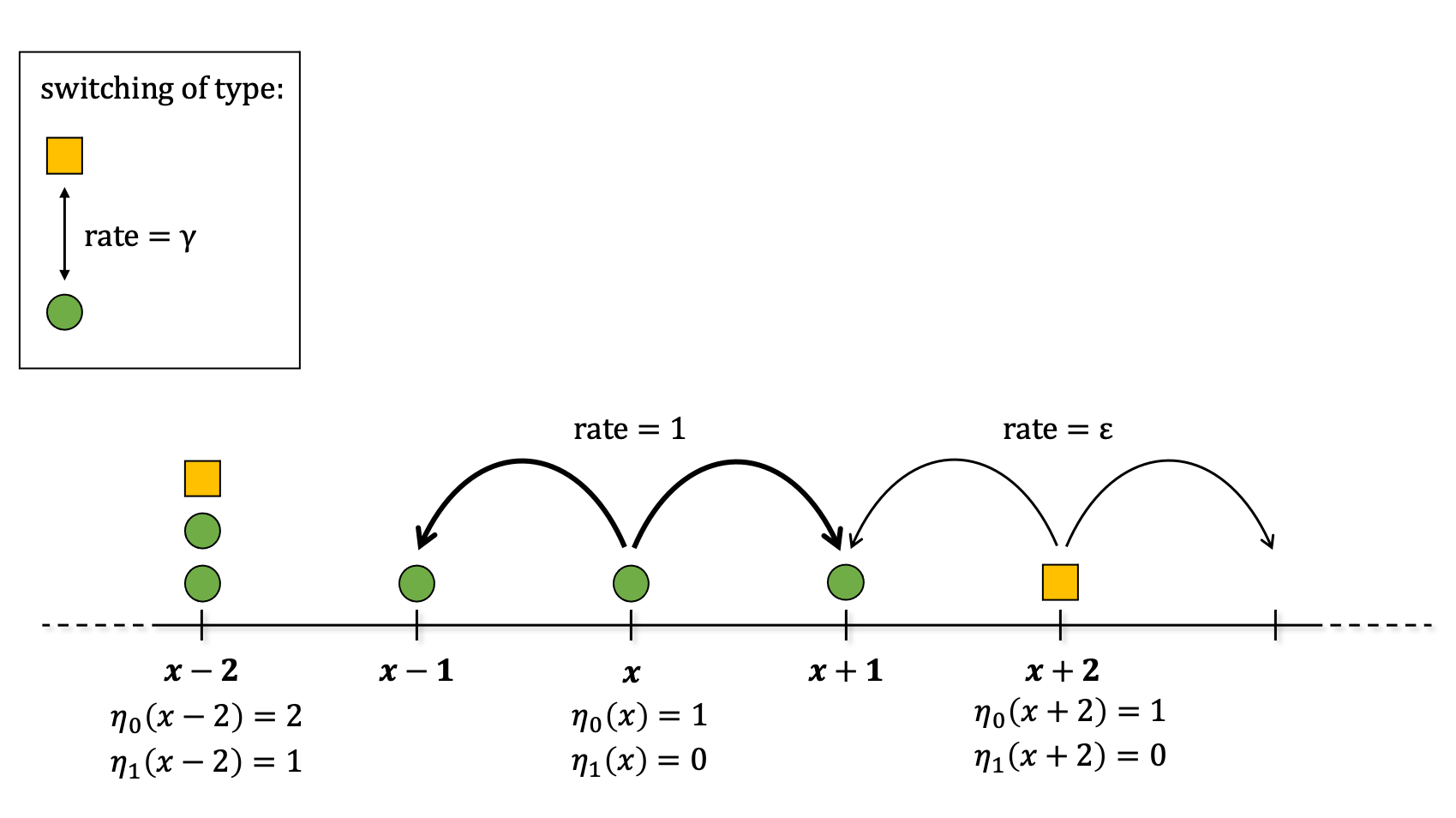}
			\caption{Representation via \emph{slow} and \emph{fast} particles moving on the one-layer graph $\Z$.}
		\end{subfigure}
		\begin{subfigure}{\linewidth}	
			\centering
			\includegraphics[width=0.6\linewidth,height=0.2\textheight]{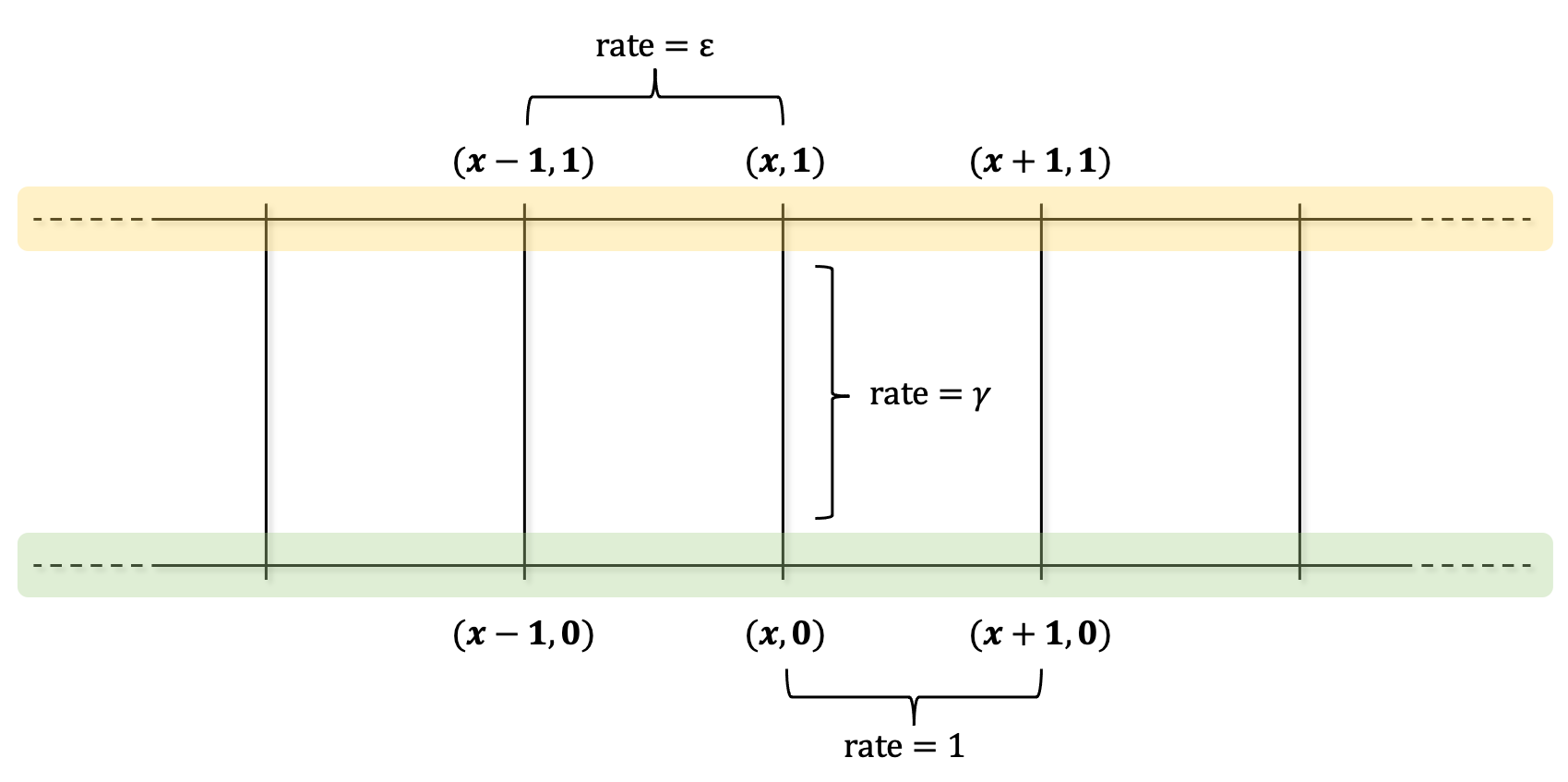}
			\caption{Representation via particles moving on the two-layer graph $\Z\times I$.}
		\end{subfigure}
		\caption{Two equivalent representations of switching independent random walks ($\sigma=0$).}
		\label{fig:sleepingnew}
	\end{figure}
	
	\begin{definition}{\bf [Switching interacting particle systems]}
		\label{def:system}
		{\rm For $\epsilon \in [0,1]$ and $\gamma \in (0,\infty)$, let $L_{\epsilon,\gamma}$ be the generator 
			\begin{equation}
			\label{eq: generator IRW with sleeping particles}
			\begin{aligned}
			L_{\epsilon,\gamma} := L_0 +\epsilon L_1 + \gamma L_{0 \updownarrow 1},
			\end{aligned}
			\end{equation}
			acting on bounded cylindrical functions $f\colon\mathcal X\to \R$ as
			\begin{equation*}
			\begin{aligned}
			&(L_0 f)(\eta) = \sum_{|x-y|=1} \Big\{\eta_0(x)(1+\sigma\eta_0(y))\,\big[f((\eta_0-\delta_x+\delta_y,\eta_1))-f(\eta)\big]\\[-0.2cm]
			&\qquad\qquad\qquad\qquad + \eta_0(y)(1+\sigma\eta_0(x))\,\big[f((\eta_0+\delta_x-\delta_y,\eta_1))-f(\eta)\big]\Big\},\\[0.2cm]
			&(L_1 f)(\eta)=  \sum_{|x-y|=1} \Big\{\eta_1(x)(1+\sigma\eta_1(y))\,\big[f((\eta_0,\eta_1-\delta_x+\delta_y))-f(\eta)\big]\\[-0.2cm]
			&\qquad\qquad\qquad\qquad + \eta_1(y)(1+\sigma\eta_1(x))\, \big[f((\eta_0,\eta_1+\delta_x-\delta_y))-f(\eta)\big]\Big\},\\[0.2cm]
			&(L_{0 \updownarrow 1}f)(\eta) = \gamma \sum_{x\in \Z^d} \Big\{\eta_0(x)(1+\sigma\eta_1(x))\, 
			\big[f((\eta_0-\delta_x,\eta_1+\delta_x))-f(\eta)\big]\\[-0.2cm]
			&\qquad\qquad\qquad\qquad +\eta_1(x)(1+\sigma\eta_0(x))\, \big[f((\eta_0+\delta_x,\eta_1-\delta_x))-f(\eta)\big]\Big\}.
			\end{aligned}
			\end{equation*}
			The Markov process $\{\eta(t)\colon\,t\ge 0\}$ on state space $\mathcal X$ with 
			$$
			\eta(t) := \{\eta(x,t)\}_{x\in\Z} = \big\{(\eta_0(x,t),\eta_1(x,t))\big\}_{x\in\Z},
			$$
			hopping rates $1,\epsilon$ and switching rate $\gamma$ is called \textit{switching exclusion process} for $\sigma=-1$, \textit{switching random walks} for $\sigma=0$ (see Fig.~\ref{fig:sleepingnew}), and \textit{switching inclusion process} for $\sigma=1$.} \hfill $\spadesuit$
	\end{definition}
	
	
	\subsection{Duality and stationary measures}
	\label{ss.dual}

	The systems defined in \eqref{eq: generator IRW with sleeping particles} can be equivalently formulated as jump processes on the graph (see Fig.~\ref{fig:sleepingnew}) with vertex set $\{(x,i) \in \Z^d \times I\}$, with $I = \{0,1\}$ labelling the two layers, and edge set given by the nearest-neighbour relation  
	$$
	(x,i)\sim (y,j) \quad \text{ when } \quad
	\begin{cases}
	|x-y|=1 \text{ and } i=j,\\
	x=y \text{ and } |i-j| =1.
	\end{cases}
	$$ 
	In this formulation the particle configuration is 
	$$
	\eta=(\eta_i(x))_{(x,i)\in\Z\times I}
	$$ 
	and the generator $L$ is given by 
	\begin{equation}
	\begin{aligned}
	\label{eq: equivalent generator}
	&(Lf)(\eta)
	= \sum_{i\in I}\sum_{|x-y|=1}\epsilon^i\eta_i(x) (1+\sigma\eta_i(y))\,[f(\eta-\delta_{(x,i)}+\delta_{(y,i)})-f(\eta)]\\
	&\qquad\qquad\qquad \quad+\epsilon^i\eta_i(y) (1+\sigma\eta_i(x))\,[f (\eta-\delta_{(y,i)}+\delta_{(x,i)})-f(\eta)]\\
	&\qquad\qquad +\sum_{i\in I}\gamma\sum_{x\in\Z} \eta_i(x)(1+\sigma\eta_{1-i})\,[f(\eta-\delta_{(x,i)}+\delta_{(x,1-i)}) -f(\eta)].
	\end{aligned}
	\end{equation}
	Thus, a single particle (when no other particles are present) is subject to two movements:
	\begin{itemize}
		\item[i)]
		\textit{Horizontal movement}: 
		In layer $i=0$ and $i=1$ the particle performs a nearest-neighbour random walk on $\Z$ at rate $1$, respectively, $\epsilon$.
		\item[ii)]
		\textit{Vertical movement}: 
		The particle switches layer at the same site at rate $\gamma$.
	\end{itemize} 
	
	It is well known (see e.g. \cite{RS18}) that for these systems  there exists a one-parameter family of reversible product measures
	$$
	\big\{\mu_\theta=\otimes_{(x,i)\in\Z\times I}\nu_{(x,i),\theta}\colon\, \theta \in \Theta\big\}
	$$
	with $\Theta=[0,1]$ if $\sigma=-1$ and $\Theta=[0,\infty)$ if $\sigma\in\{0,1\}$, and with marginals given by 
	\begin{align}
	\label{eq:marginals}
	\nu_{(x,i),\theta} = \begin{dcases}  
	\text{Bernoulli}\,(\theta),  &\sigma = -1,\\[.15cm]
	\text{Poisson}\,(\theta), &\sigma = 0,\\[.15cm]
	\text{Negative--Binomial}\,(1,\tfrac{\theta}{1+\theta}), 
	&\sigma = 1.
	\end{dcases}
	\end{align}
	Moreover, the \textit{classical self-duality relation} holds, i.e., for all configurations $\eta, \, \xi \in \mathcal X$ and for all times $t\ge0$,
	$$\E_\eta[D(\xi,\eta_t)]=\E_\xi[D(\xi_t,\eta)],$$ with $\{\xi(t): \ t\ge 0\}$ and $\{\eta(t):\ t\ge 0\}$ two copies of the process with generator given in \eqref{eq: generator IRW with sleeping particles} and self-duality function $D\colon\,\mathcal{X} \times \mathcal{X} \to \R$ given by
	\begin{equation}
	\label{eq: duality function}
	D(\xi,\eta) := \prod_{(x,i) \in \Z^d \times I} d(\xi_i(x),\eta_i(x)),
	\end{equation}
	with 
	\begin{equation}
	\label{eq: single site duality}
	d(k,n) := \frac{n!}{(n-k)!}\frac{1}{w(k)}\,\mathbf{1}_{\{k\leq n\}}
	\end{equation}
	and 
	\begin{equation}
	w(k):= \begin{cases}   
	\frac{\Gamma(1+k)}{\Gamma(1)}, &\sigma=1,\\
	1, &\sigma=-1,0. 
	\end{cases}
	\end{equation}
	
	\begin{remark}{\bf [Possible extensions]}
		{\rm Note that we could allow for more than two layers, inhomogeneous rates and non-nearest neighbour jumps as well, and the same duality relation would still hold (see e.g.\ \cite{FRSHDL} for an inhomogeneous version of the exclusion process). More precisely, let $\{\omega_i(\{x,y\})\}_{x,y\in\Z}$ and $\{\alpha_i(x)\}_{x\in\Z}$ be collections of bounded weights for $i\in I_M=\{0,1,\ldots, M\}$ with $M<\infty$. Then the interacting particle systems with generator
\begin{equation}
			\begin{aligned}
			(L_{D,\gamma}f)(\eta)
			&=\sum_{i=0}^M D_i \sum_{|x-y|=1} 
			\omega_i(\{x,y\})\,\Big\{\eta_i(x)\,(\alpha_i(y)+\sigma\eta_i(y))\,\big[f(\eta-\delta_{(x,i)}+\delta_{(y,i)})-f(\eta)\big]\\
			&\qquad\qquad\qquad\qquad \qquad+ \eta_i(y)\,(\alpha_i(x)+\sigma\eta_i(x))\,\big[f (\eta-\delta_{(y,i)}+\delta_{(x,i)})-f(\eta)\big]\Big\}\\
			&\quad + \sum_{i=0}^{M-1} \gamma_{\{i,i+1\}}\,\sum_{x\in \Z} \Big\{\eta_i(x)\,
			\big[f(\eta-\delta_{(x,i)}+\delta_{(x,i+1)})-f(\eta)\big]\\
			&\qquad\qquad\qquad\qquad +\eta_{i+1}(x)\,\big[f(\eta-\delta_{(x,i+1)}+\delta_{(x,i)})-f(\eta)\big]\Big\},
			\end{aligned}
			\end{equation}
			with 	$\eta=(\eta_i(x))_{(x,i)\in\Z\times I_M}$, $\{D_i\}_{i\in I_M}$ a bounded decreasing collection of weights in $[0,1]$	and $\gamma_{\{i,i+1\}}\in (0,\infty)$, are still self-dual with duality function as in \eqref{eq: duality function}, but with $I$ replaced by $I_M$ and single-site duality functions given by $d_{(x,i)}(k,n)=\frac{n!}{(n-k)!}\frac{1}{w_{(x,i)}(k)}\,\mathbf{1}_{\{k\leq n\}}$ with
			\begin{align*}
			w_{(x,i)}(k)\ := \begin{dcases}
			\frac{\alpha_i(x)!}{(\alpha_i(x)-k)!}\1_{\{k \leq \alpha_i(x)\}}, 
			&\sigma = -1,\\
			\alpha_i(x)^k, 
			&\sigma = 0,\\
			\frac{\Gamma(\alpha_i(x)+k)}{\Gamma(\alpha_i(x))}, &\sigma = 1.
			\end{dcases}
			\end{align*}
			In the present paper we prefer to stick to the two-layer homogeneous setting in order not to introduce extra notations. However, it is straightforward to extend many of our results to the inhomogeneous multi-layer model.}\hfill $\spadesuit$
	\end{remark}

	\subsection{Outline}
	\label{ss.outline}
	
	Section~\ref{s.hydro} identifies and analyses the \textit{hydrodynamic limit} of the system in Definition~\ref{def:system} after scaling space, time and switching rate diffusively. We thereby exhibit a class of interacting particle systems whose microscopic dynamics scales to a macroscopic dynamics called the double diffusivity model. Moreover, we provide a discussion on the solutions of this model, connecting mathematical literature applied to material science and to financial mathematics. Section~\ref{s.bdres} looks at what happens, both microscopically and macroscopically, when \textit{boundary reservoirs} are added, resulting in a non-equilibrium flow. Here the possibility of \textit{uphill diffusion} becomes manifest, which is absent in single-layer systems, i.e., the two layers interact in a way that allows for a violation of Fick's law. We characterise the parameter regime for uphill diffusion. Moreover, we show that, in the limit as  $\epsilon\downarrow 0$, the macroscopic stationary profile of the type-1 particles adapts to the microscopic stationary profile of the type-0 particles, resulting in a \emph{discontinuity} at the boundary in the case of unequal boundary conditions on the top layer and the bottom layer. Appendix~\ref{s.app*} provides the inverse of a certain boundary-layer matrix. 

	
	\section{The hydrodynamic limit}
	\label{s.hydro}
	
	In this section we scale space, time and switching diffusively, so as to obtain a \textit{hydrodynamic limit}. In Section~\ref{ss.micmac} we scale space by $1/N$, time by $N^2$, the switching rate by $1/N^2$, introduce scaled microscopic empirical distributions, and let $N\to\infty$ to obtain a system of macroscopic equations. In Section~\ref{ss.repr} we recall some known results for this system, namely, there exists  a unique solution that can be represented in terms of an underlying diffusion equation or, alternatively, via a Feynman-Kac formula involving the switching diffusion process.
	
	
	\subsection{From microscopic to macroscopic}
	\label{ss.micmac}
	
	Let $N \in \N$, and consider the scaled generator $L_{\epsilon,\gamma_N}$ (recall \eqref{eq: generator IRW with sleeping particles}) with $\gamma_N = \Upsilon/N^2$ for some $\Upsilon \in (0,\infty)$, i.e., the reaction term is slowed down by a factor $N^2$ in anticipation of the diffusive scaling we are going to consider.
	
	In order to study the collective behaviour of the particles after scaling of space and time, we introduce the following empirical density fields, which are Radon measure-valued càdlàg processes: 
	$$
	\mathsf X^{N}_0(t) := \frac{1}{N} \sum_{x\in \Z} \eta_{0}(x,tN^2)\,\delta_{x/N},\qquad
	\mathsf X^{N}_1(t) := \frac{1}{N} \sum_{x\in \Z} \eta_{1}(x,tN^2)\,\delta_{x/N}.
	$$
	In order to derive the hydrodynamic limit for the switching interacting particle systems, we need the following set of assumptions. In the following we denote by $C^\infty_c(\R)$ the space of infinitely differentiable functions with values in $\R$ and compact support, by $C_b(\R;\sigma)$ the space of bounded and continuous functions  with values in $\R_+$ for $\sigma\in\{0,1\}$ and with values in $[0,1]$ for $\sigma=-1$, by $C_0(\R)$ the space of continuous functions vanishing at infinity, by $C^2_0(\R)$ the space of twice differentiable functions vanishing at infinity and by $M$ the space of Radon measure on $\R$.
	
	\begin{assumption}{\bf [Compatible initial conditions]}
		\label{assumption: initial condition}
		{\rm Let $
			\bar\rho_i\in C_b(\R;\sigma)$  for $i\in\{0,1\}$
			be two given functions, called initial macroscopic profiles. We say that a sequence $(\mu_N)_{N\in \N}$ of measures on $\mathcal X$ is a sequence of compatible initial conditions when:
			\begin{itemize}
				\item[(i)] 
				For any $i\in\{0,1\}$, $g\in C^\infty_c(\R)$ and $\delta >0,$ 
				$$
				\lim_{N\to \infty}\mu_N\left( \left|  \langle\mathsf X^{N}_0(t),g\rangle - \int_\R \dd x\ \bar \rho_i(x)g(x)   \right| >\delta  \right)=0.
				$$  
				\item[(ii)] 
				There exists a constant $C<\infty$ such that
				\begin{equation}
				\label{eq: bound initial condition}
				\sup_{(x,i) \in \Z \times I} \E_{\mu_N}[\eta_i(x)^2] \le C.
				\end{equation}
			\end{itemize}
		}\hfill $\spadesuit$
	\end{assumption}
	\noindent
	Note that Assumption \ref{assumption: initial condition}(ii) is the same as employed in \cite[Theorem  1, Assumption (b)]{CS2020} and  is trivial for the exclusion process.
	\begin{theorem}{\bf [Hydrodynamic scaling]}
		\label{theorem:HDL IRW with slow particles}
		Let $\bar \rho_0,\bar \rho_1 \in C_b(\R;\sigma)$ be two initial macroscopic profiles, and let $(\mu_N)_{N\in\N}$ be a sequence of compatible initial conditions. Let $\P_{\mu_N}$ be the law of the measure-valued process 
		$$
		\{X^N(t)\colon\,t \geq 0\} , \qquad X^N(t) := (X_0^N(t), X_1^N(t)),
		$$ 
		induced by the initial measure $\mu_N$. Then, for any $T, \delta>0$ and $g\in C^\infty_c(\R)$,
		$$
		\lim_{N\to \infty}\P_{\mu_N}\left(\sup_{t\in[0,T]} \left|\,\langle\mathsf X^{N}_i(t),g\rangle
		- \int_\R \dd x\, \rho_i(x,t)g(x)\,\right| >\delta \right)=0, \qquad i \in I,
		$$
		where $\rho_0$ and $\rho_1$ are the unique continuous and bounded strong solutions of the system 
		\begin{equation}
		\label{eq:HDL IRW with slow particles}
		\begin{cases}
		\partial_t \rho_0 = \Delta \rho_0 + \Upsilon(\rho_1-\rho_0),\\
		\partial_t \rho_1 = \epsilon \Delta \rho_1 + \Upsilon(\rho_0-\rho_1),
		\end{cases}
		\end{equation}
		with initial conditions
		\begin{equation}
		\label{eq:initial conditions}
		\begin{cases}
		\rho_0(x,0) = \bar \rho_0(x),\\
		\rho_1(x,0) = \bar \rho_1(x).
		\end{cases}
		\end{equation}
	\end{theorem}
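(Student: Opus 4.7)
The plan is to run the classical Dynkin martingale plus tightness plus uniqueness programme. The simplifying algebraic fact, common to all three values of $\sigma\in\{-1,0,1\}$, is the identity
\begin{equation*}
\eta_i(x)(1+\sigma\eta_i(y)) - \eta_i(y)(1+\sigma\eta_i(x)) = \eta_i(x)-\eta_i(y),
\end{equation*}
together with the analogous cancellation for the vertical switching transitions. This implies that the action of $L_{\epsilon,\gamma_N}$ on a linear observable of the empirical fields is again a linear observable, with \emph{no} two-point correlations appearing, so the weak formulation of the limiting PDE closes automatically, without any local equilibrium or replacement lemma argument.

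Concretely, for $g\in C^\infty_c(\R)$ and $i\in I$, set $F^{N,i}_g(\eta):=\langle\mathsf X^N_i,g\rangle$. A direct computation using the above cancellations gives
\begin{equation*}
N^2\,L_{\epsilon,\gamma_N} F^{N,i}_g(\eta) = \epsilon^i\,\langle\mathsf X^N_i,\Delta_N g\rangle + \Upsilon\,\langle\mathsf X^N_{1-i}-\mathsf X^N_i,g\rangle,
\end{equation*}
where $\Delta_N g(x/N):=N^2[g((x+1)/N)+g((x-1)/N)-2g(x/N)]$ converges uniformly on compacts to $\Delta g$. Writing $M^{N,i}_g$ for the associated Dynkin martingale, the identity shows that $(\mathsf X^N_0,\mathsf X^N_1)$ already solves a closed discrete weak formulation of \eqref{eq:HDL IRW with slow particles} up to the martingale correction.

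To turn this into a true hydrodynamic limit, I would first show $\E_{\mu_N}[\sup_{t\le T}|M^{N,i}_g(t)|^2]\to 0$. Each elementary jump changes $F^{N,i}_g$ by $O(N^{-2})\|g'\|_\infty$, so the carré du champ computation yields
\begin{equation*}
\E_{\mu_N}\big[\langle M^{N,i}_g\rangle(T)\big] \le \frac{C(g,T)}{N}\,\sup_{s\le T,\,x\in\Z,\,j\in I}\E_{\mu_N}\big[\eta_j(x,sN^2)^2\big],
\end{equation*}
whose right-hand side is bounded in $N$: automatic for $\sigma=-1$, and for $\sigma\in\{0,1\}$ the bound \eqref{eq: bound initial condition} is propagated in time via the two-particle self-duality, which expresses $\E[\eta_j(x,t)\eta_k(y,t)]$ through a two-walker dual expectation whose initial value is controlled by \eqref{eq: bound initial condition}. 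Doob's inequality then kills the martingale in $L^2$, and Aldous--Rebolledo tightness of $(\mathsf X^N_0,\mathsf X^N_1)$ in $D([0,T],M\times M)$ follows from the uniformly bounded drifts and the vanishing martingale. Any subsequential limit $(\pi_0,\pi_1)$ therefore satisfies, for every $g\in C^\infty_c(\R)$ and $i\in I$,
\begin{equation*}
\langle\pi_i(t),g\rangle=\langle\pi_i(0),g\rangle+\int_0^t\Big[\epsilon^i\langle\pi_i(s),\Delta g\rangle+\Upsilon\langle\pi_{1-i}(s)-\pi_i(s),g\rangle\Big]\,\dd s,
\end{equation*}
with $\pi_i(0)=\bar\rho_i(x)\,\dd x$ by Assumption~\ref{assumption: initial condition}(i); the uniform second-moment bound forces $\pi_i(t)$ to be absolutely continuous with density $\rho_i(\cdot,t)\in L^\infty$, so $(\rho_0,\rho_1)$ is a bounded weak solution of \eqref{eq:HDL IRW with slow particles}--\eqref{eq:initial conditions}. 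Uniqueness of such solutions, to be recalled in Section~\ref{ss.repr} via a Feynman-Kac representation through the switching diffusion, promotes subsequential convergence to convergence of the full sequence.

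I expect the main obstacle to be the uniform-in-$(N,t)$ control of $\E_{\mu_N}[\eta_j(x,tN^2)^2]$ for the inclusion case $\sigma=1$, since the attractive interaction can in principle amplify fluctuations and no deterministic occupation cap is available. This is precisely where self-duality pays off: the expectation of the self-duality function \eqref{eq: duality function} with a two-particle dual configuration satisfies a closed linear equation driven by two switching random walkers with bounded per-particle rates, so the initial second-moment bound is automatically transported in time without any entropy or free-energy input.
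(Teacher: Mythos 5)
Your proposal is correct and follows essentially the same route as the paper's proof: the closed action of the generator on linear observables, the Dynkin martingale controlled via the carr\'{e} du champ and Doob's inequality, propagation of the second-moment bound \eqref{eq: bound initial condition} in time through the two-particle self-duality, and then tightness plus PDE uniqueness. The only cosmetic difference is that you invoke the Aldous--Rebolledo criterion for tightness where the paper verifies compact containment and equi-continuity directly, following \cite{S05}.
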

	\begin{proof} 
		The proof follows the standard route presented in \cite[Section 8]{S05} (see also \cite{DMP91, CS2020}). We still explain the main steps because the two-layer setup is not standard.
		First of all, note that the macroscopic equation \eqref{eq:HDL IRW with slow particles} can be straightforwardly identified by computing the action of the rescaled generator $L^N = L_{\epsilon,\Upsilon/N^2}$ on the cylindrical functions $f_i(\eta):=\eta_i(x), \ i\in \{0,1\}$, namely ,
		$$
		(L^N f_i)(\eta) = \epsilon^i \left[ \eta_i(x+1)-2\eta_i(x)+\eta_i(x-1)\right]
		+ \frac{\Upsilon}{N^2}\left[\eta_{1-i}(x)-\eta_i(x)\right]
		$$
		and hence, for any $g\in C^{\infty}_c(\R)$,
		$$
		\begin{aligned}
		\int_0^{tN^2} \dd s\,L^N(\langle\mathsf X^{N}_i(s),g\rangle)
		&= \int_0^{tN^2}\dd s\, \frac{\epsilon^i}{N} \sum_{x\in\Z} \eta_i(x,s)\,
		\tfrac{1}{2}\,\big[g((x+1)/N)-2g(x/N)+g((x-1)/N)\big]\\
		&\quad +\int_0^{tN^2} \dd s\,\frac{1}{N} \sum_{x\in\Z} g(x/N)\, 
		\frac{\Upsilon}{N^2}\,\big[\eta_{1-i}(x,s)-\eta_i(x,s)\big],\\
		\end{aligned}
		$$
		where we moved the generator of the simple random walk to the test function by using reversibility w.r.t. the counting measure. By the regularity of $g$, we thus have 
		$$
		\begin{aligned}
		&\int_0^{tN^2} \dd s\,L^N(\langle\mathsf X^{N}_i(s),g\rangle)
		= \int_0^{t}\dd s\, \langle\mathsf X^{N}_i(sN^2),\epsilon \Delta g\rangle  +\int_0^{tN^2} \dd s\,
		\frac{\Upsilon}{N^2}\,\big[\langle\mathsf X^{N}_{1-i}(s),g\rangle-\langle\mathsf X^{N}_i(s),g\rangle\big]+ o(\tfrac{1}{N^2}),
		\end{aligned}
		$$
		which is the discrete counterpart of the weak formulation of the right-hand side of \eqref{eq:HDL IRW with slow particles}, i.e., $\int_0^t\dd s\int_\R \dd x\, \rho_{i}(x,s)\Delta g(x) + \Upsilon \int_0^t\dd s\int_\R\dd x\, [\rho_{1-i}(x,s)-\rho_i(x,s)]g(x)$.
		Thus, as a first step, we show that 
		\begin{multline*}\lim_{N\to \infty}\P_{\mu_N}\left( \sup_{t\in[0,T]}\left|  \langle\mathsf X^{N}_i(t),g\rangle-  \langle\mathsf X^{N}_i(0),g\rangle   -  \int_0^{t}\dd s\, \langle\mathsf X^{N}_i(sN^2),\epsilon^i\Delta g\rangle  \right.\right.\\\left. \left.-\int_0^{tN^2} \dd s\,
		\frac{\Upsilon}{N^2}\,\big[\langle\mathsf X^{N}_{1-i}(s)-\mathsf X^{N}_i(s),g\rangle\big]   \right|   >\delta     \right)=0.
		\end{multline*}
		In order to prove the above convergence we employ the Dynkin's formula for Markov processes, which gives that the process defined as 
		\begin{align*}
		M^N_i(g,t):=  \langle\mathsf X^{N}_i(t),g\rangle- \langle\mathsf X^{N}_i(0),g\rangle-\int_0^{tN^2} \dd s\,L^N(\langle\mathsf X^{N}_i(s),g\rangle)
		\end{align*}
		is a martingale w.r.t. the natural filtration generated by the process $\{\eta_t\}_{t\ge 0}$ and with predictable quadratic variation expressed in terms of the carr\'{e} du champ, i.e.,
		$$\langle M^N_i(g,t), M^N_i(g,t)  \rangle = \int_0^t \dd s\,\E_{\mu_N}\left[ \Gamma^N_i(g,s)  \right]$$
		with 
		$$ \Gamma^N_i(g,s) =L^N \left( \langle\mathsf X^{N}_i(s),g\rangle\ \right)^2 -\langle\mathsf X^{N}_i(s),g\rangle L^N \left( \langle\mathsf X^{N}_i(s),g\rangle\ \right).$$
		We then have, by Chebyshev's inequality and Doob's martingale inequality, 
		\begin{align}\label{eq: Chebyshev argumebt }
		\nonumber&\P_{\mu_N}\left(  \sup_{t\in[0,T]}\left|  \langle\mathsf X^{N}_0(s),g\rangle-  \langle\mathsf X^{N}_0(s),g\rangle   -  \int_0^{t}\dd s\, \langle\mathsf X^{N}_0(sN^2),\epsilon\Delta g\rangle  \right.\right.\\&\qquad \qquad \qquad \qquad \qquad \quad \quad\qquad \qquad \qquad\left.\left.-\int_0^{tN^2} \dd s\,
		\frac{\Upsilon}{N^2}\,\big[\langle\mathsf X^{N}_1(s),g\rangle-\langle\mathsf X^{N}_0(s),g\rangle\big]   \right|   >\delta     \right)\\
		\nonumber&\le \frac{1}{\delta^2}\E_{\mu_N}\left[  \sup_{t\in[0,T]}\left |M^N_i(g,s)\right|^2 \right]\le \frac{4}{\epsilon^2}\E_{\mu_N}\left[ \left |M^N_i(g,T)\right|^2 \right]=\frac{4}{\epsilon^2}\E_{\mu_N}\left[    \langle   M^N_i(g,T), M^N_i(g,T)  \rangle ^2 \right]\\
		&=\frac{4}{\delta^2N^2}\E_{\mu_N}\left[ \int_0^{N^2T}\dd s\,\sum_{x\in\Z^d}\eta_i(x,s)(1+\sigma \eta_i(x\pm 1,s))\left( g\left(\frac{x\pm 1}{N}\right)   -g\left(\frac{x}{N}\right)     \right)  \right]\\
		&\nonumber+\frac{4\Upsilon}{\delta^2N^4}\E_{\mu_N}\left[  \int_0^{N^2T} \dd s\, \sum_{x\in\Z^d} (\eta_i(x,s)+\eta_{1-i}(x,s)+2\sigma\eta_i(x,s)\eta_{1-i}(x,s)) g^2\left(\frac{x}{N}\right)   \right],
		\end{align}
		where in the last equality we explicitly computed the carré du champ.
		Let $k\in\N$ be such that the support of $g$ is in $[-k,k]$. Then, by the regularity of $g$, \eqref{eq: Chebyshev argumebt } is bounded by
		\begin{multline}\frac{4}{\delta^2N^2}(N^2T)(2k+1)N \frac{\| \nabla g\|_{\infty}}{N^2}\sup_{x,\in\Z,s\in[0,N^2T]}\E_{\mu_N}\left[ \eta_i(x,s)(1+\sigma \eta_i(x + 1,s)) \right]\\+\frac{4\Upsilon}{\delta^2N^4}(N^2T)(2k+1)N\| g \|_{\infty}\sup_{x,\in\Z,s\in[0,N^2T]}\E_{\mu_N}\left[ \eta_i(x,s)+\eta_{1-i}(x,s)+2\sigma\eta_i(x,s)\eta_{1-i}(x,s) \right] .
		\end{multline}
		We now show that, as a consequence of \eqref{eq: bound initial condition},  for any $(x,i),\ (y,j)\in\Z\times I,$
		\begin{equation}\label{eq: bound at later time}
		\E_{\mu_N}\left[ \eta_i(x,s)\right]\le C, \quad \E_{\mu_N}\left[ \eta_i(x,s) \eta_j(y,s)\right]\le C,
		\end{equation}
		from which we obtain 
		\begin{align}
		\nonumber&\P_{\mu_N}\left(  \sup_{t\in[0,T]}\left|  \langle\mathsf X^{N}_0(s),g\rangle-  \langle\mathsf X^{N}_0(s),g\rangle   -  \int_0^{t}\dd s\, \langle\mathsf X^{N}_0(sN^2),\epsilon\Delta g\rangle \right.\right.\\&\qquad \qquad \qquad \qquad \qquad \quad \quad\qquad \qquad \qquad\left.\left. -\int_0^{tN^2} \dd s\,
		\frac{\Upsilon}{N^2}\,\big[\langle\mathsf X^{N}_1(s),g\rangle-\langle\mathsf X^{N}_0(s),g\rangle\big]   \right|   >\delta     \right)\\
		\nonumber&\le \frac{8T}{\delta^2N}(2k+1)\| \nabla g\|_{\infty}C+\Upsilon\frac{16 T}{\delta^2N}(2k+1)\| g \|_{\infty} C,
		\end{align}
		and the desired convergence follows.
		In order to prove \eqref{eq: bound at later time}, first of all note 	that, by the Cauchy-Schwartz inequality, it follows from \eqref{eq: bound initial condition}  that, for any $(x,i),\ (y,j)\in\Z\times I$,
		\begin{equation}\label{eq: bound x ne y}
		\E_{\mu_N}\left[ \eta_i(x)\eta_j(y) \right]\le C.
		\end{equation}
		Moreover, recalling the duality functions given in \eqref{eq: duality function} and defining the configuration $\xi=\delta_{(x,i)}+\delta_{(y,j)}$ for $(x,i)\ne(y,j)$, we have that
		\begin{align*}
		&\E_{\mu_N}\left[ \eta_i(x,t)\eta_j(y,t) \right]=\E_{\mu_N}[D(\xi,\eta_t)]=\int_{\mathcal X} \E_\eta[D(\xi,\eta_t)]\,\dd\mu_N(\eta)\\&=\int_{\mathcal X} \E_\xi[D(\xi_t,\eta)]\,\dd\mu_N(\eta)=\E_\xi\left[  \E_{\mu_N}[D(\xi_t,\eta)] \right]
		\end{align*}
		and, labeling the particles in the dual configuration as $(X_t,i_t)$ and $(Y_t,j_t)$ with initial conditions $(X_0,i_0)=(x,i)$ and $(Y_0,j_0)=(y,j)$, we obtain
		\begin{align}\label{eq: bound duality any time}
		\nonumber&	\E_{\mu_N}\left[ \eta_i(x,t)\eta_j(y,t) \right]=\E_\xi\left[  \E_{\mu_N}[\eta_{i_t}(X_t)\eta_{j_t}(Y_t) \1 _{(X_t,i_t)\ne (Y_t,j_t)}] +\E_{\mu_N}[\eta_{i_t}(X_t)(\eta_{i_t}(X_t)-1) \1 _{(X_t,i_t)=(Y_t,j_t)}] \right] \\
		&\le \E_\xi\left[  \E_{\mu_N}[\eta_{i_t}(X_t)\eta_{j_t}(Y_t)] \right]\le \E_\xi\left[ \sup_{(x,i),(y,j)\in\Z\times\{0,1\}} \E_{\mu_N}[\eta_{i}(x)\eta_{j}(y)] \right]\le C,
		\end{align}
		where we used \eqref{eq: bound x ne y} in the last inequality. Similarly, for $\xi=\delta_{(x,i)}$ and $(X_t,i_t)$ the  dual particle with initial condition $(X_0,i_0)=(x,i)$,	we have that $\E_{\mu_N}\left[ \eta_i(x,t) \right]\le \E_{\mu_N}\left[D(\xi,\eta_t   )\right]=\E_\xi[ \E_{\mu_N}  [\eta_{i_t}(X_t)  ]  ]$.  Using that $\eta_i(x)\le \eta_i(x)^2$ for any $(x,i)\in \Z\times I$ and using  \eqref{eq: bound initial condition}, we obtain \eqref{eq: bound at later time}. 
		
		The proof is concluded after showing the following:
		\begin{itemize}
			\item[i)] Tightness holds for the sequence of distributions of the processes $\{\mathsf X^N_i\}_{N\in\N}$, denoted by $\{Q_N\}_{N\in \N}$.
			\item[ii)] All limit points coincide and  are supported by the unique path $\mathsf X _i(t, \dd x)=\rho_i(x,t)\,\dd x$, with $\rho_i$ the unique weak (and in particular strong) bounded and continuous solution  of \eqref{eq:HDL IRW with slow particles}.
		\end{itemize}
		While for (i) we provide an explanation, we skip the proof of (ii) because it is standard and based on PDE arguments, namely, the existence and the uniqueness of the solutions in the class of continuous-time functions with values in $C_b(\R,\sigma)$ (we refer to  \cite[Lemma 8.6 and 8.7]{S05} for further details), and the fact that Assumption \ref{assumption: initial condition}(i) ensures that  the initial condition of \eqref{eq:HDL IRW with slow particles} is also matched.

		Tightness of the sequence $\{Q_N\}_{N\in\N}$ follows from the compact containment condition on the one hand, i.e., for any $\delta>0$ and $t>0$ there exists a compact set $K\subset M$ such that $\P_{\mu_N}(X^N_i\in K)>1-\delta$, and the equi-continuity condition on the other hand, i.e., $\limsup_{N\to \infty}\P_{\mu_N}(  \omega(\mathsf X_i^N,\delta,T))\ge \mathfrak e)\le \mathfrak e$ for $\omega(\alpha,\delta,T):=\sup\{  d_M(\alpha(s),\alpha(t)): s,t\in[0,T], |s-t|\le \delta    \}$ with $d_M$ the metric on Radon measures defined as 
		$$d_M(\nu_1,\nu_2):=\sum_{j\in\N}2^{-j}\left(   1 \wedge \left | \int_\R \phi_j \dd \nu_1 -\int_\R \phi_j \dd \nu_2 \right |  \right)$$ for an appropriately chosen sequence of functions $(  \phi_j )_{j\in\N}$ in $C^{\infty}_c(\R)$. We refer to \cite[Section A.10]{S05} for details on the above metric and to the proof of \cite[Lemma 8.5]{S05} for the equi-continuity condition. We conclude by proving the compact containment condition. Define 
		$$K:=\Big\{ \nu \in M\ s.t. \ \exists \, k\in\N \ s.t.  \ \nu[-\ell,\ell]\le A(2\ell+1)\ell^2   \ \forall \ \ell\in[k,\infty]\cap \N \Big\}$$
		with $A>0$ such that $\frac{C\pi}{6A}<\epsilon$.
		By \cite[Proposition A.25]{S05}, we have that $K$ is a pre-compact subset of $M$. Moreover, by the Markov inequality and Assumption \ref{assumption: initial condition}(ii), it follows that 
		\begin{align*}
		&Q_N(\bar K^c)\le \sum_{\ell\in\N}\P_{\mu_N}\left(  \mathsf X_i^N([-\ell,\ell])\ge A(2\ell+1)\ell^2  \right) \le \sum_{\ell\in\N}\frac{1}{ A(2\ell+1)\ell^2 } \E_{\mu_N}\left[   \mathsf X_i^N([-\ell,\ell])  \right] \\
		&=\sum_{\ell\in\N}\frac{1}{ A(2\ell+1)\ell^2 } \sum_{x\in [-\ell,\ell]\cap\frac{\Z}{N}}\E_{\mu_N}\left[ \eta_i(x, tN^2) \right]\le \sum_{\ell\in\N} \frac{1}{ A(2\ell+1)\ell^2 } \frac{2\ell N+1}{N}C\le \frac{C}{A} \sum_{\ell\in\N} \frac{1}{ \ell^2 }<\epsilon,
		\end{align*}
		from which it follows that $Q_N(\bar K)>1-\epsilon$ for any $N$.
	\end{proof}

	\begin{remark}{\bf [Total density]}
		{\rm (i) If $\rho_0,\rho_1$ are smooth enough and satisfy \eqref{eq:HDL IRW with slow particles}, then by taking extra derivatives we see that the total density $\rho:=\rho_0+\rho_1$ satisfies the \textit{thermal telegrapher equation} 
			\begin{equation}
			\label{eq: telegrapher}
			\partial_t \left(\partial_t \rho + 2\gamma\rho\right)
			= - \epsilon\Delta(\Delta\rho) + (1+\epsilon)\Delta\left(\partial_t \rho + \rho\right),
			\end{equation}
			which is second order in $\partial_t$ and fourth order in $\partial_x$ (see \cite{HA80I,HA80II} for a derivation). Note that \eqref{eq: telegrapher} shows that the total density does not satisfy the usual diffusion equation. This fact will be investigated in detail in the next section, where we will analyse the non-Fick property of $\rho$.\\
			(ii) If $\epsilon=1$, then \eqref{eq: telegrapher} simplifies to the \textit{heat equation} $\partial_t\rho =  \Delta\rho$.\\
			(iii) If $\epsilon=0$, then \eqref{eq: telegrapher} reads
			$$
			\partial_t\left(\partial_t \rho + 2\lambda\rho\right)
			= \Delta\left(\partial_t \rho + \rho\right),
			$$
			which is known as the \textit{strongly damped wave equation}. The term $\partial_t(2\lambda\rho)$ is referred to as frictional damping, the term $\Delta(\partial_t \rho)$ as Kelvin-Voigt damping (see \cite{CCT17}).} \hfill $\spadesuit$
	\end{remark}
	
	
	\subsection{Existence, uniqueness and representation of the solution}
	\label{ss.repr}
	
	The existence and uniqueness of a continuous-time solution $(\rho_0(t),\, \rho_1(t))$ with values in $C_b(\R,\sigma)$ of the system in \eqref{eq:HDL IRW with slow particles} can be proved by standard Fourier analysis. Below we recall some known results that have a more probabilistic interpretation.
	
	
	\paragraph{Stochastic representation of the solution.}
	
	The system in \eqref{eq:HDL IRW with slow particles} fits in the realm of switching diffusions (see e.g.\ \cite{YZ10}), which are widely studied in the mathematical finance literature. Indeed, let $\{i_t\colon\,t \geq 0\}$ be the pure jump process on state space $I=\{0,1\}$ that switches at rate $\Upsilon$, whose generator acting on bounded functions $g\colon\,I \to \R$ is 
	$$
	(Ag)(i) := \Upsilon(g(1-i)-g(i)), \qquad i \in I.
	$$
	Let $\{X_t\colon\,t \geq 0\}$ be the stochastic process on $\R$ solving the stochastic differential equation
	$$
	\dd X_t = \psi(i_t)\,\dd W_t,
	$$
	where $W_t=B_{2t}$ with $\{B_t\colon\, t \geq 0\}$ standard Brownian motion, and $\psi\colon\,I\to \{D_0,D_1\}$ is given by 
	$$
	\psi := D_0\,\boldsymbol1_{\{0\}} + D_1\,\boldsymbol1_{\{1\}},
	$$
	with $D_0=1$ and $D_1=\epsilon$ in our setting. Let $\mathcal{L} = \mathcal{L}_{\epsilon,\Upsilon}$ be the generator defined by 
	$$
	(\mathcal L f)(x,i) := \lim_{t\downarrow 0} \frac{1}{t}\,\E_{x,i}[f(X_t,i_t)-f(x,i)]
	$$
	for  $f\colon\,\R \times I \to \R$ such that $f(\cdot,i)\in C^2_0(\R)$. Then, via a standard computation (see e.g.\ \cite[Eq.(4.4)]{F85}),
	it follows that
	$$
	\begin{aligned}
	(\mathcal L f)(x,i) &=\psi(i)(\Delta f)(x,i)+ \Upsilon[f(x,1-i)-f(x,i)]\\[0.2cm]
	&=\begin{cases}
	\Delta f(x,0)+\Upsilon\,[f(x,1)-f(x,0)],
	&i=0,\\
	\epsilon \Delta f(x,1)+\Upsilon\,[f(x,0)-f(x,1)], 
	&i=1.
	\end{cases}
	\end{aligned}
	$$
	We therefore have the following result that corresponds to \cite[Chapter 5, Section 4, Theorem 4.1]{F85}(see also \cite[Theorem 5.2]{YZ10}).
	
	\begin{theorem}{{\bf [Stochastic representation of the solution]}}
		Suppose that $\bar \rho_i\colon\,\R\to\R$ for $i\in I$ are continuous and bounded. Then \eqref{eq:HDL IRW with slow particles} has a unique solution given by 
		$$
		\rho_i(x,t)=\E_{(x,i)}[ \bar\rho_{i_t}(X_t)], \qquad i \in I.
		$$
	\end{theorem}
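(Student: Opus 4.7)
The plan is to verify that the candidate $u_i(x,t) := \E_{(x,i)}[\bar\rho_{i_t}(X_t)]$ solves the system \eqref{eq:HDL IRW with slow particles} with initial data $(\bar\rho_0, \bar\rho_1)$ by exploiting the Markov semigroup structure of the switching diffusion $(X_t, i_t)$, and then to establish uniqueness via a standard martingale argument.

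For existence, I would view $u$ as the image under the semigroup $P_t f(x,i) := \E_{(x,i)}[f(X_t, i_t)]$ of the bounded continuous function $f(x,i) := \bar\rho_i(x)$ on $\R \times I$. When $\bar\rho_i \in C^2_0(\R)$, the Kolmogorov backward equation for the Feller process $(X_t, i_t)$ yields $\partial_t P_t f = \mathcal{L} P_t f$ with $P_0 f = f$. Writing $\mathcal{L}$ componentwise as in the computation preceding the statement and identifying $\rho_0(x,t) := u(x,0,t)$ and $\rho_1(x,t) := u(x,1,t)$, one reads off that the two components satisfy exactly \eqref{eq:HDL IRW with slow particles} with the correct initial data. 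For general continuous bounded $\bar\rho_i$, I would approximate by a sequence $(\bar\rho_i^{(n)})_n$ in $C^2_0(\R)$, apply the argument above, and pass to the limit using the contractivity of $P_t$ on $C_b$ together with interior smoothing for $t>0$, which is available from the constant-coefficient structure of $\mathcal{L}$ (for instance via Fourier diagonalisation of the $2 \times 2$ reaction matrix, which essentially decouples the system into two scalar heat equations with a bounded zero-order perturbation).

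For uniqueness, let $(v_0, v_1)$ be another bounded continuous strong solution of \eqref{eq:HDL IRW with slow particles} with the same initial data, fix $T>0$, and set $M_s := v_{i_s}(X_s, T-s)$ for $s \in [0,T]$. An application of It\^o's formula, combined with the compensator for the pure jump component $\{i_s\colon s \ge 0\}$, shows that the drift of $M_s$ equals $(-\partial_t v + \mathcal{L} v)\big|_{(X_s,i_s,T-s)}$, which vanishes by assumption on $v$. Boundedness of $v$ upgrades the resulting local martingale to a true martingale, so $v_i(x,T) = \E_{(x,i)}[M_0] = \E_{(x,i)}[M_T] = u_i(x,T)$, and uniqueness follows. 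The main obstacle I anticipate is the regularity needed to interpret the classical Laplacian in \eqref{eq:HDL IRW with slow particles} and to justify It\^o's formula for $v$, since the probabilistic formula itself is well defined for merely continuous bounded data; both issues are resolved by the mollification step together with the parabolic smoothing of the decoupled system, after which one may alternatively invoke the general result in \cite[Chapter~5, Section~4, Theorem~4.1]{F85} verbatim.
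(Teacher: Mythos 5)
Your argument is correct and is essentially the proof the paper has in mind: the paper verifies that $\mathcal{L}$ acts componentwise as the right-hand side of \eqref{eq:HDL IRW with slow particles} and then simply cites \cite[Chapter 5, Section 4, Theorem 4.1]{F85}, whose content is precisely the semigroup/backward-equation existence step and the It\^o--martingale uniqueness step you reconstruct. The one loose phrase is that diagonalising the reaction matrix does not literally decouple the system into two scalar heat equations when $\epsilon\neq 1$, since $\mathrm{diag}(1,\epsilon)$ and the reaction matrix do not commute; however, the interior parabolic smoothing you invoke is still available (e.g.\ via a Duhamel bootstrap treating the reaction term as a bounded zero-order perturbation), so nothing essential is affected.
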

	
	Note that if there is only one particle in the system \eqref{eq: generator IRW with sleeping particles}, then we are left with a single random walk, say $\{Y_t\colon\,t \geq 0\}$, whose generator, denoted by $\mathsf A$, acts on bounded functions $f\colon\,\Z\times I\to \R$ as
	$$
	(\mathsf Af)(y,i) = \psi(i)\left[\sum_{z\sim y} [f(z,i)-f(y,i)]\right] + \Upsilon\,[f(y,1-i)-f(y,i)].
	$$
	After we apply the generator to the function $f(y,i)=y$, we get 
	$$
	\mathsf (Af)(y,i)=0,
	$$
	i.e., the position of the random walk is a martingale. Computing the quadratic variation via the carr\'{e} du champ, we find
	$$
	\mathsf A(Y_t^2) =\psi(i_t) [(Y_t+1)^2-Y_t^2] + \psi(i_t)[(Y_t-1)^2-Y_t^2] = 2\psi(i_t).
	$$
	Hence the predictable quadratic variation is given by 
	$$
	\int_0^t \dd s\,2\psi(i_s).
	$$
	Note that for $\epsilon=0$ the latter equals the total amount of time the random walk is not \textit{dormant} up to time $t$.
	
	When we diffusively scale the system (scaling the reaction term was done at the beginning of Section \ref{s.hydro}), the quadratic variation becomes
	$$
	\int_0^{tN^2} \dd s\, \psi(i_{N,s}) = \int_0^{t} \dd r\,\psi(i_r).
	$$
	As a consequence, we have the following invariance principle: 
	\begin{itemize}
		\item[]
		Given the path of the process $\{i_t\colon\, t\geq 0\}$,
		$$
		\lim_{N\to \infty} \frac{Y_{N^2t}}{N} = W_{\int_0^{t} \dd r\,\sqrt{\psi(i_r)}},
		$$
		where $W_t=B_{2t}$ with $\{B_t\colon\, t \geq 0\}$ is standard Brownian motion.
	\end{itemize}	
	Thus, if we knew the path of the process $\{i_r\colon\, r\ge 0\}$, then we could express the solution of the system in \eqref{eq:HDL IRW with slow particles} in terms of a time-changed Brownian motion. However, even though $\{i_r\colon\, r\ge 0\}$ is a simple flipping process, we cannot say much explicitly about the random time $\int_0^{t} \dd r\,\sqrt{\psi(i_r)}$. We therefore look for a simpler formula, where the relation to a Brownian motion with different velocities is more explicit. We achieve this by looking at the resolvent of the generator $\mathcal L$. In the following, we denote by $\{S_t, \, t\ge 0\}$ the semigroup on $C_b(\R)$ of $\{W_{t}:\,t\ge 0\}$.
	
	\begin{proposition}{\bf [Resolvent]}
		Let $f\colon\,\R\times I\to \R$ be a bounded and smooth function. Then, for $\lambda>0$, $\epsilon\in(0,1]$ and $i\in I$,
		\begin{equation}
		\begin{aligned}
		\label{eq: resolvent epsilon >0}
		&(\lambda\boldsymbol I-\mathcal L)^{-1}	f(x,i)\\
		&=\int_0^\infty \dd t\, \frac{1}{\epsilon^i}\, \ee^{-\tfrac{1+\epsilon}{\epsilon}\ell(\Upsilon,\lambda)t}
		\left(\cosh(tc_\epsilon(\Upsilon,\lambda))+  \frac{1-\epsilon}{\epsilon}\ell_\epsilon(\Upsilon,\lambda)\frac{\sinh(tc_\epsilon(\Upsilon,\lambda))}{c_\epsilon(\lambda)}\right)\,(S_tf(\cdot,i))(x) \\
		&+  \int_0^\infty  \dd t \,\ee^{-\tfrac{1+\epsilon}{\epsilon}\ell(\Upsilon,\lambda)t}
		\left(\frac{\Upsilon}{\epsilon}\sinh(tc_\epsilon(\Upsilon,\lambda))\right)\,(S_tf(\cdot,1-i))(x)
		,
		\end{aligned}
		\end{equation}
		where $	c_\epsilon(\Upsilon,\lambda)= \sqrt{\left(\tfrac{1-\epsilon}{\epsilon}\right)^2\ell(\Upsilon,\lambda)^2+\frac{\Upsilon^2}{\epsilon}}$ and $\ell(\Upsilon,\lambda)=\frac{\Upsilon+\lambda}{2}$, while for $\epsilon=0$, 
		\begin{align}
		\label{eq: resolvent epsilon =0}
		(\lambda \boldsymbol I-\mathcal L)^{-1}f(x,i)=\int_0^\infty \dd t\,\ee^{-\lambda\tfrac{2\Upsilon+\lambda}{\Upsilon+\lambda}t}\left(  \left(\tfrac{\Upsilon}{\lambda+\Upsilon}\right)^i(S_tf(\cdot,0))(x)+  \left(\tfrac{\Upsilon}{\Upsilon+\lambda}\right)^{i+1}(S_tf(\cdot,1))(x) \right).
		\end{align}
	\end{proposition}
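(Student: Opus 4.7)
The plan is to compute the resolvent algebraically by reducing the two-component resolvent equation to a single scalar equation in which the spatial operator enters only through $-\Delta$, and then reading off the stated expression via partial fractions and the Laplace representation of $(-\Delta+\mu)^{-1}$.

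Writing $(\lambda\boldsymbol I-\mathcal L)u=f$ componentwise gives the coupled system
$$
(\lambda+\Upsilon-\Delta)u_0-\Upsilon u_1 = f_0,\qquad (\lambda+\Upsilon-\epsilon\Delta)u_1-\Upsilon u_0 = f_1.
$$
In the case $\epsilon\in(0,1]$, I would solve one equation for the off-component and substitute into the other, thereby obtaining a scalar equation of the form $p(-\Delta)u_i=r_i(-\Delta)f_i+\Upsilon f_{1-i}$, with
$$
p(z)=\epsilon z^2+(1+\epsilon)(\lambda+\Upsilon)z+\lambda(\lambda+2\Upsilon),\qquad r_i(z)=\lambda+\Upsilon+\epsilon^{1-i}z.
$$

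The next step is to factorise $p$. A direct discriminant calculation gives $p(z)=\epsilon(z+\mu_+)(z+\mu_-)$ with
$$
\mu_\pm=\tfrac{1+\epsilon}{\epsilon}\ell(\Upsilon,\lambda)\pm c_\epsilon(\Upsilon,\lambda),
$$
so that $\tfrac12(\mu_++\mu_-)=\tfrac{1+\epsilon}{\epsilon}\ell$ and $\tfrac12(\mu_+-\mu_-)=c_\epsilon$; this is precisely what is needed to produce the exponential rate and the hyperbolic arguments appearing in \eqref{eq: resolvent epsilon >0}. I then apply the partial fractions
$$
\frac{r_i(z)}{p(z)}=\frac{A_i}{z+\mu_+}+\frac{B_i}{z+\mu_-},\qquad \frac{\Upsilon}{p(z)}=\frac{\Upsilon}{2\epsilon c_\epsilon}\Big[\frac{1}{z+\mu_-}-\frac{1}{z+\mu_+}\Big],
$$
together with the Laplace identity $(-\Delta+\mu_\pm)^{-1}=\int_0^\infty \ee^{-\mu_\pm t}S_t\,\dd t$, valid on $C_0(\R)$ because $\mu_\pm>0$. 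Collecting terms via
$A_i\ee^{-\mu_+ t}+B_i\ee^{-\mu_- t}=\ee^{-\tfrac{1+\epsilon}{\epsilon}\ell t}\bigl[(A_i+B_i)\cosh(tc_\epsilon)+(B_i-A_i)\sinh(tc_\epsilon)\bigr]$, a short computation of the residues $A_i,B_i$ reproduces both the $1/\epsilon^i$ prefactor and the $\cosh+\tfrac{(1-\epsilon)\ell}{\epsilon\, c_\epsilon}\sinh$ combination multiplying $S_tf(\cdot,i)$, while the $\Upsilon/p(z)$ piece gives the pure sinh coefficient multiplying $S_tf(\cdot,1-i)$.

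The degenerate case $\epsilon=0$ is treated separately. Here $p$ is linear in $z$: the second resolvent equation becomes the algebraic relation $u_1=(f_1+\Upsilon u_0)/(\lambda+\Upsilon)$, and substituting into the first yields
$$
(-\Delta+\lambda')u_0=f_0+\tfrac{\Upsilon}{\lambda+\Upsilon}f_1,\qquad \lambda':=\tfrac{\lambda(\lambda+2\Upsilon)}{\lambda+\Upsilon},
$$
which is exactly the rate appearing in \eqref{eq: resolvent epsilon =0}. A single application of the Laplace identity for $(-\Delta+\lambda')^{-1}$ produces the $i=0$ formula, and $u_1$ follows from the algebraic relation and the geometric factors $(\Upsilon/(\lambda+\Upsilon))^i$ shown in the statement.

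The main obstacle is bookkeeping: correctly computing the partial-fraction residues $A_i,B_i$ as linear combinations of $1$ and $\ell/(\epsilon c_\epsilon)$ while keeping track of the dependence on $i$ through the factor $\epsilon^{1-i}$ in $r_i$ (which ultimately produces the $1/\epsilon^i$ prefactor), and justifying the interchange of the $t$-integral with the unbounded operator $-\Delta$. The latter is standard since $S_t$ is a contraction semigroup, the kernels $\ee^{-\tfrac{1+\epsilon}{\epsilon}\ell t}\cosh(tc_\epsilon)$ and $\ee^{-\tfrac{1+\epsilon}{\epsilon}\ell t}\sinh(tc_\epsilon)$ are exponentially integrable in $t$ (as $\tfrac{1+\epsilon}{\epsilon}\ell>c_\epsilon$), and $f(\cdot,i)$ is bounded and smooth enough for $S_tf(\cdot,i)$ to be controlled uniformly.
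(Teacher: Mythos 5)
Your proof is correct in strategy and, where I have checked the algebra, in substance, but it takes a genuinely different route from the paper. The paper writes $\mathcal L=\psi(i)(\Delta+\tilde A)$, uses Feynman--Kac to decouple the spatial motion from the layer process, and then evaluates the resulting $2\times 2$ kernel $K_\epsilon(t,\lambda)=\ee^{t\psi_\epsilon^{-1}(-\lambda\boldsymbol I+A)}\psi_\epsilon^{-1}$ via the explicit formula for the exponential of a $2\times2$ matrix; the case $\epsilon=0$ is then obtained by letting $\epsilon\downarrow0$ in that kernel. You instead eliminate one component from the resolvent system, factorise the resulting symbol $p(z)=\epsilon(z+\mu_+)(z+\mu_-)$, and use partial fractions together with $(-\Delta+\mu_\pm)^{-1}=\int_0^\infty\ee^{-\mu_\pm t}S_t\,\dd t$. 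The two computations are equivalent (your $\mu_\pm=\tfrac{1+\epsilon}{\epsilon}\ell\pm c_\epsilon$ are exactly the eigenvalues of $-\psi_\epsilon^{-1}(-\lambda\boldsymbol I+A)$, and your residues $A_i+B_i=\epsilon^{-i}$, $B_i-A_i=\pm\tfrac{(1-\epsilon)\ell}{\epsilon^{1+i}c_\epsilon}$ are the matrix entries), but your route is more elementary, avoids quoting the matrix-exponential formula, and — importantly — handles $\epsilon=0$ directly rather than through a limit of kernels. Note in passing that your residue computation gives the sinh coefficient with a \emph{minus} sign for $i=1$ and an extra factor $1/c_\epsilon$ in the off-diagonal term, in agreement with the matrix displayed in the paper's proof; the displayed statement of \eqref{eq: resolvent epsilon >0} is loose on both points.

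One step is asserted too quickly. In the case $\epsilon=0$ you say that $u_1$ ``follows from the algebraic relation and the geometric factors shown in the statement'', but carrying out your own elimination gives $u_1=\tfrac{1}{\lambda+\Upsilon}f(x,1)+\tfrac{\Upsilon}{\lambda+\Upsilon}u_0$, which is the right-hand side of \eqref{eq: resolvent epsilon =0} \emph{plus} the local term $\tfrac{1}{\lambda+\Upsilon}f(x,1)$. That term is not optional: testing against $f(\cdot,0)\equiv0$, $f(\cdot,1)\equiv1$ gives the exact answer $u_1=\tfrac{\lambda+\Upsilon}{\lambda(\lambda+2\Upsilon)}$, which your formula reproduces and \eqref{eq: resolvent epsilon =0} does not (the paper's pointwise limit $\epsilon\downarrow0$ of the kernel $K_\epsilon^{(2,2)}$ loses a Dirac mass at $t=0$, which is precisely this term). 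So you should not claim to recover \eqref{eq: resolvent epsilon =0} verbatim; state the formula your method actually yields. Apart from this, the only remaining work is the bookkeeping you already flag, and your justification of interchanging $\int_0^\infty\dd t$ with $-\Delta$ (integrability of the kernels since $\tfrac{1+\epsilon}{\epsilon}\ell>c_\epsilon$, contractivity of $S_t$) is adequate.
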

	\begin{proof}
		The proof is split into two parts.
		
		\medskip\noindent
		\underline{Case $\epsilon>0$.} 
		We can split the generator $\mathcal L$ as
		$$
		\mathcal L = \psi(i) \tilde {\mathcal L}
		= \psi(i)\left(\Delta + \frac{1}{\psi(i)} A\right)
		= \psi(i)(\Delta + \tilde A),
		$$
		i.e., we decouple $X_t$ and $i_t$ in the action of the generator. We can now use the Feynman-Kac formula to express the resolvent of the operator $\mathcal L$ in terms of the operator $\tilde{\mathcal{L}}$. Denoting by $\tilde\E$ the expectation of the process with generator $\tilde{\mathcal L}$, we have, for $\lambda \in \R$, 
		$$
		(\lambda \boldsymbol I-\mathcal L)^{-1} f(x,i)
		= \left(\frac{\lambda \boldsymbol I}{\psi}-\tilde{\mathcal L} \right)^{-1}\left( \frac{f(x,i)}{\psi(i)}\right)\\
		= \int_0^\infty \dd t\,\,\tilde \E_{(x,i)} \left[\ee^{-\int_{0}^t  \dd s\,
			\frac{\lambda}{\psi(i_s)} }\,\, \frac{f(X_t,i_t)}{\psi(i_t)} \right],
		$$
		and by the decoupling of $X_t$ and $i_t$ under $\tilde{\mathcal L}$, we get 
		$$
		\begin{aligned}
		&(\lambda \boldsymbol I-\mathcal L)^{-1} f(x,i)\\
		&= \int_0^\infty \dd t\,\,\tilde\E_{i}\left[\ee^{-\lambda\int_{0}^t \dd s\,\frac{1}{\psi(i_s)}}\,\, 
		\frac{\boldsymbol 1_{\{0\}}(i_t)}{\psi(i_t)} \right] (S_{t}f(\cdot,0))(x) 
		+  \int_0^\infty \dd t\,\,\tilde\E_{i} \left[\ee^{-\lambda\int_{0}^t  \frac{1}{\psi(i_s)}}\,\, 
		\frac{\boldsymbol 1_{\{1\}}(i_t)}{\psi(i_t)} \right] (S_{t}f(\cdot,1))(x)\\  
		&= \int_0^\infty \dd t\,\,\tilde \E_{i} \left[\ee^{-\lambda\int_{0}^t \dd s\, \frac{1}{\psi(i_s)}}\,\, 
		\boldsymbol 1_{\{0\}}(i_t) \right](S_{t}f(\cdot,0))(x)
		+ \frac{1}{\epsilon} \int_0^\infty \dd t\,\,\tilde \E_{i} \left[\ee^{-\lambda\int_{0}^t \dd s\,\frac{1}{\psi(i_s)}}\,\, 
		\boldsymbol 1_{\{1\}}(i_t) \right] (S_{t}f(\cdot,1))(x).  
		\end{aligned}
		$$
		Defining  
		$$
		A:=
		\left[\begin{array}{c c c c}
		-\Upsilon & \Upsilon  \\
		\Upsilon& -\Upsilon \\
		\end{array}
		\right],
		\qquad 
		\psi_\epsilon:=
		\left[\begin{array}{c c c c}
		1 & 0  \\
		0 & \epsilon \\
		\end{array}
		\right],
		$$
		and using again the Feynman-Kac formula, we have
		$$
		(\lambda \boldsymbol I-\mathcal L)^{-1} \left[\begin{array}{c c c c}
		f(x,0)  \\
		f(x,1)  \\
		\end{array}
		\right] = \int_0^\infty \dd t\,K_{\epsilon}(t,\lambda) \left[\begin{array}{c c c c}
		(S_tf(\cdot,0))(x)\\
		(S_tf(\cdot,1))(x) \\
		\end{array}
		\right]
		$$
		with 
		$K_{\epsilon}(t,\lambda) = \ee^{t\psi_\epsilon^{-1}(-\lambda \boldsymbol I +A)}\psi_\epsilon^{-1}$.
		
		Using the explicit formula for the exponential of a $2\times 2$ matrix (see e.g.\ \cite[Corollary 2.4]{BS93}), we obtain
		\small
		\begin{equation}
		\ee^{t\psi_\epsilon^{-1}(-\lambda \boldsymbol I +A)}=\ee^{-\tfrac{1+\epsilon}{\epsilon}\ell(\Upsilon,\lambda)t}\left[\begin{array}{c c c c}
		\cosh(tc_\epsilon(\Upsilon,\lambda))+  \tfrac{1-\epsilon}{\epsilon}\ell(\Upsilon,\lambda)\tfrac{\sinh(tc_\epsilon(\Upsilon,\lambda))}{c_\epsilon(\Upsilon,\lambda)}& \Upsilon\tfrac{\sinh(tc_\epsilon(\Upsilon,\lambda))}{c_\epsilon(\Upsilon,\lambda)}  \\
		\tfrac{\Upsilon}{\epsilon}\tfrac{\sinh(tc_\epsilon(\Upsilon,\lambda))}{c_\epsilon(\Upsilon,\lambda)} &\cosh(tc_\epsilon(\Upsilon,\lambda))-  \tfrac{1-\epsilon}{\epsilon}\ell(\Upsilon,\lambda)\tfrac{\sinh(tc_\epsilon(\Upsilon,\lambda))}{c_\epsilon(\Upsilon,\lambda)}\\
		\end{array}
		\right]
		\end{equation}
		\normalsize
		with 
		$
		c_\epsilon(\Upsilon,\lambda)= \sqrt{\left(\tfrac{1-\epsilon}{\epsilon}\right)^2\ell(\Upsilon,\lambda)^2+\frac{\Upsilon^2}{\epsilon}}
		$ and $\ell(\Upsilon,\lambda)=\frac{\Upsilon+\lambda}{2}$, from which we obtain \eqref{eq: resolvent epsilon >0}.
		
		\medskip\noindent
		\underline{Case $\epsilon=0$.} 
		We derive $K_0(t,\lambda)$ by taking the limit  $\epsilon\downarrow0$ in the previous expression, i.e., $K_0(t,\lambda)=\lim_{\epsilon \downarrow 0} K_\epsilon(t,\lambda)$. We thus have that $K_0(t,\lambda)$ is equal to
		\small
		\begin{align*}
		&\lim_{\epsilon \downarrow 0}\ee^{-\tfrac{1+\epsilon}{\epsilon}\ell(\Upsilon,\lambda)t}\left[\begin{array}{c c c c}
		\cosh(tc_\epsilon(\Upsilon,\lambda))+  \tfrac{1-\epsilon}{\epsilon}\ell(\Upsilon,\lambda)\tfrac{\sinh(tc_\epsilon(\Upsilon,\lambda))}{c_\epsilon(\Upsilon,\lambda)}& \tfrac{\Upsilon}{\epsilon}\tfrac{\sinh(tc_\epsilon(\Upsilon,\lambda))}{c_\epsilon(\Upsilon,\lambda)}  \\
		\tfrac{\Upsilon}{\epsilon}\tfrac{\sinh(tc_\epsilon(\Upsilon,\lambda))}{c_\epsilon(\Upsilon,\lambda)} &\tfrac{1}{\epsilon}\cosh(tc_\epsilon(\Upsilon,\lambda))-  \tfrac{1-\epsilon}{\epsilon^2}\ell(\Upsilon,\lambda)\tfrac{\sinh(tc_\epsilon(\Upsilon,\lambda))}{c_\epsilon(\Upsilon,\lambda)}\\
		\end{array}
		\right]\\
		&=\ee^{-\lambda\tfrac{2\Upsilon+\lambda}{\Upsilon+\lambda}t}\left[\begin{array}{c c c c}
		1& \tfrac{\Upsilon}{\Upsilon+\lambda}  \\
	\tfrac{\Upsilon}{\Upsilon+\lambda}  &\left(\tfrac{\Upsilon}{\Upsilon+\lambda}\right)^2 \\
		\end{array}
		\right],
		\end{align*}
		\normalsize
		from which \eqref{eq: resolvent epsilon =0} follows.
	\end{proof}

\begin{remark}{\bf [Symmetric layers]}
		{\rm Note that for $\epsilon=1$ we have
			\begin{align*}
			(\lambda \boldsymbol I-\mathcal L)^{-1} f(x,i) =\int_0^\infty \dd t\, \ee^{-\lambda t}  
			\left( \tfrac{1+\ee^{-2\Upsilon t}}{2}\,(S_tf(\cdot,i))(x)+  \tfrac{1-\ee^{-2\Upsilon t}}{2}\,(S_tf(\cdot,1-i))(x)\right).
			\end{align*}
		}\hfill $\spadesuit$
	\end{remark}
	
	We conclude this section by noting that the system in \eqref{eq:HDL IRW with slow particles} was studied in detail in \cite{HA80I,HA80II}. By taking Fourier and Laplace transforms and inverting them, it is possible to deduce explicitly the solution, which is expressed in terms of solutions to the classical heat equation. More precisely, using formula \cite[Eq.2.2]{HA80II}, we have that 
	\begin{multline}
	\rho_0(x,t)=\ee^{-\Upsilon t}\,(S_t\,\bar\rho_0)(x)
	+\frac{\Upsilon}{1-\epsilon}\ee^{-\Upsilon t}\int_{\epsilon t}^t \dd s\, 
	\left( \left(\frac{s-\epsilon t}{t-s}\right)^{1/2}I_1(\upsilon(s))\,(S_s\,\bar\rho_0)(x)
	+I_0(\upsilon(s))\,(S_s\,\bar\rho_1)(x)\right)
	\end{multline}
	and 
	\begin{multline}\allowdisplaybreaks
	\rho_1(x,t)=\ee^{-\Upsilon t}(S_{\epsilon t}\,\bar\rho_1)(x)\\+\frac{\Upsilon}{1-\epsilon}
	\ee^{-\sigma t}\int_{\epsilon t}^t \dd s\,\left( \left(\frac{s-\epsilon t}{t-s}\right)^{-1/2}I_1(\upsilon(s))\,
	(S_s\,\bar\rho_1)(x)+I_0(\upsilon(s))\,(S_s\,\bar\rho_0)(x) \right),
	\end{multline}
	where 
	$\upsilon(s)=\frac{2\Upsilon}{1-\epsilon}((t-s)(s-\epsilon t))^{1/2}$, and $I_0(\cdot)$ and $I_1(\cdot)$ are the modified Bessel functions.

	
	\section{The system with boundary reservoirs}
	\label{s.bdres}
		In this section we consider a finite version of the switching interacting particle systems introduced in Definition~\ref{def:system} to which boundary reservoirs are added. Section~\ref{ss.modelres} defines the model. Section~\ref{ss.dualres} identifies the dual and the stationary measures. Section~\ref{ss.profileres} derives the non-equilibrium density profile, both for the microscopic system and the macroscopic system, and offers various simulations. In Section~\ref{ss.statcurrent} we compute the stationary horizontal current of slow and fast particles both for the microscopic system and the macroscopic system. Section~\ref{ss.uphillres} shows that in the macroscopic system, for certain choices of the rates, there can be a flow of particles uphill, i.e., against the gradient imposed by the reservoirs.	 Thus, as a consequence of the competing driving mechanisms of slow and fast particles, we can have a flow of particles from the side with lower density to the side with higher density.
	
	\subsection{Model}
	\label{ss.modelres}
	
	We consider the same system as in Definition~\ref{def:system}, but restricted to $V:=\{1,\ldots,N\}\subset\Z$. In addition, we set $\hat{V} := V\cup \{L,R\}$ and attach a \textit{left-reservoir} to $L$ and a \textit{right-reservoir} to $R$, both for fast and slow particles. To be more precise, there are four reservoirs (see Fig.~\ref{fig:sleepingres}): 	
	\begin{figure}[htbp]
		\begin{subfigure}{\linewidth}
			\centering
			\includegraphics[width=0.63\linewidth,height=0.2\textheight]{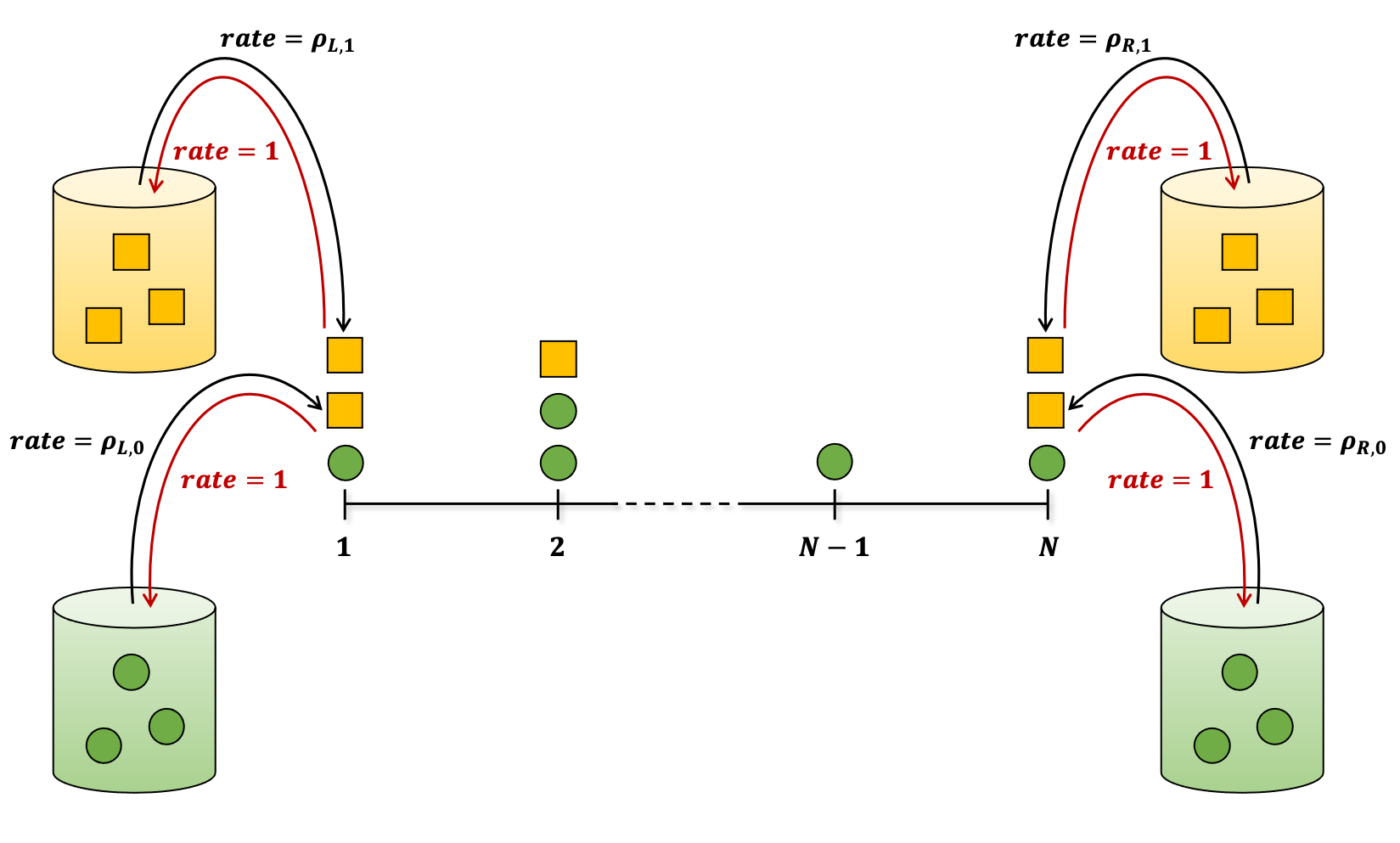}
			\caption{Representation via \emph{slow} and \emph{fast} particles moving on $V$.}
		\end{subfigure}
		\begin{subfigure}{\linewidth}
			\centering
			\includegraphics[width=0.63\linewidth,height=0.212\textheight]{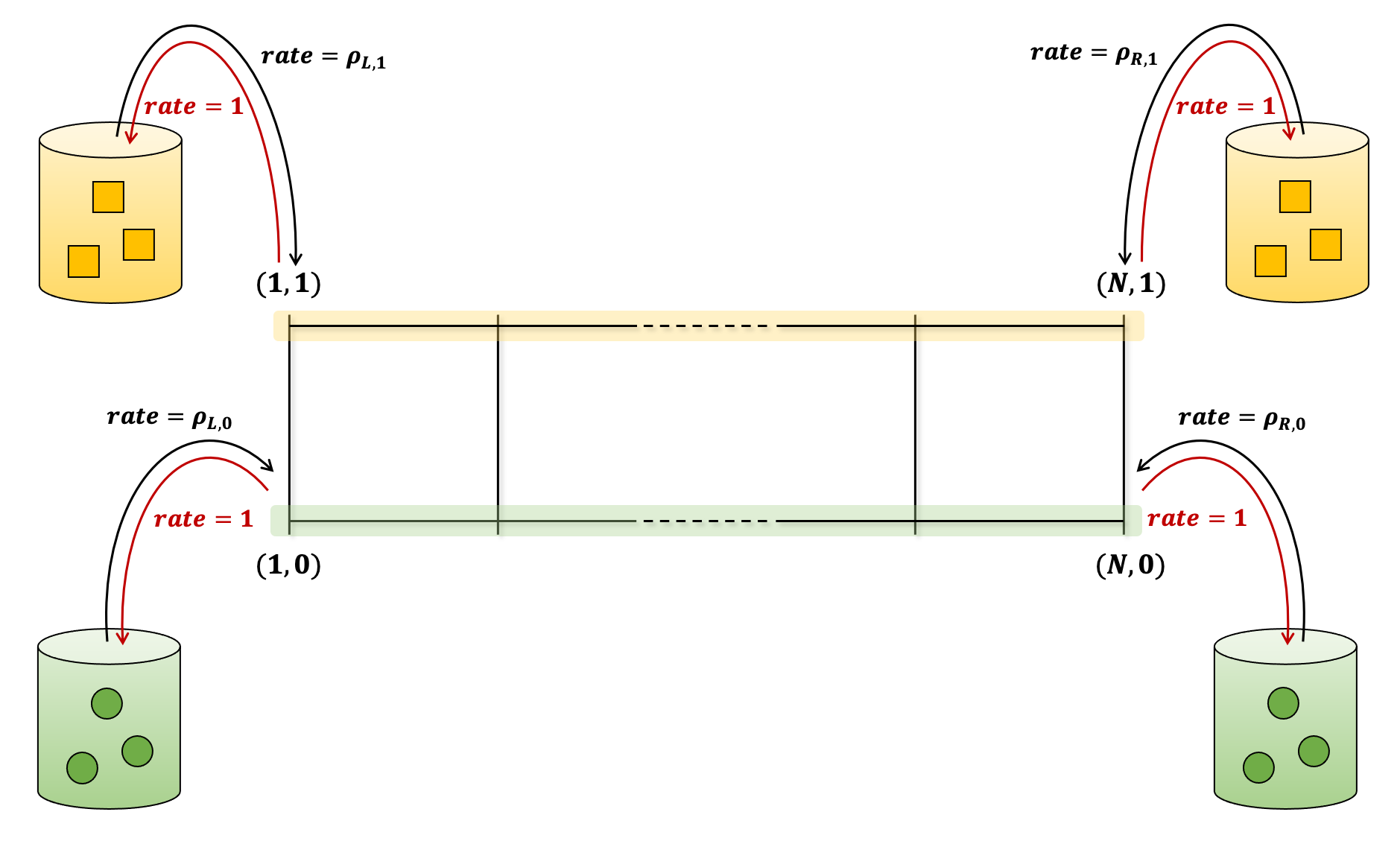}
			\caption{Representation via particles moving on $V\times I$.}
		\end{subfigure}
		\caption{Case $\sigma=0$, $\epsilon>0$ with boundary reservoirs: two equivalent representations.}
		\label{fig:sleepingres}
	\end{figure}
	
	\begin{itemize}
		\item[i)]
		For the fast particles, a left-reservoir at $L$ injects fast particles at $x=1$ at rate $\rho_{L,0}(1+\sigma\eta_0(1,t))$ and a right-reservoir at $R$ injects fast particles at $x=N$ at rate $\rho_{R,0}(1+\sigma\eta_0(N,t))$. The left-reservoir absorbs fast particles at rate $1+\sigma\rho_{L,0}$, while the right-reservoir does so at rate $1+\sigma\rho_{R,0}$.
		\item [ii)]
		For the slow particles, a left-reservoir at $L$ injects slow particles at $x=1$ at rate $\rho_{L,1}(1+\sigma\eta_1(1,t))$ and a right-reservoir at $R$ injects slow particles at $x=N$ at rate $\rho_{R,1}(1+\sigma\eta_1(N,t))$. The left-reservoir absorbs fast particles at rate $1+\sigma\rho_{L,1}$, while the right-reservoir does so at rate $1+\sigma\rho_{R,1}$.
	\end{itemize}
	Inside $V$, the particles move as before.

	For $i\in I$, $x \in V$ and $t\geq 0$, let $\eta_{i}(x,t)$ denote the number of particles in layer $i$ at site $x$ at time $t$. For $\sigma\in\{-1,0,1\}$,  the Markov process $\{\eta(t)\colon\,t \geq 0\}$ with
	$$
	\eta(t) = \{\eta_{0}(x,t),\eta_{1}(x,t)\}_{x \in V}
	$$
	has state space
	$$\mathcal X = \begin{cases} 
	\{0,1\}^V\times \{0,1\}^V, &\sigma=-1,\\ 
	\N_0^{V}\times \N_0^{V}, &\sigma=0,1,\
	\end{cases}
	$$
	and generator
	\begin{equation}
	\label{eq: generator IRW with sleeping particles + reservoirs}
	L  := L_{\epsilon,\gamma,N}= L^{\mathrm{bulk}} + L^{\mathrm{res}}
	\end{equation}
	with
	\begin{align}
	&L^{\mathrm{bulk}}:= L^{\mathrm{bulk}}_0 +\epsilon L^{\mathrm{bulk}}_1
	+ \gamma L^{\mathrm{bulk}}_{0 \updownarrow 1}
	\end{align}
	acting on bounded cylindrical functions $f\colon\,\mathcal X\to \R$ as
	\begin{align*}
	(L^{\mathrm{bulk}}_0 f)(\eta) 
	=&\sum_{x=1}^{N-1} \Big\{\eta_0(x)(1+\sigma\eta_0(x+1))\,\big[f(\eta_0-\delta_{x}
	+\delta_{x+1},\eta_1)-f(\eta_0,\eta_1)\big]\\
	& +\eta_0(x+1)(1+\sigma\eta_0(x))\,\big[f(\eta_0-\delta_{x+1}+\delta_{x},\eta)-f(\eta_0,\eta_1)\big]\Big\},
	\end{align*}
	\begin{align*}
	(L^{\mathrm{bulk}}_1 f)(\eta)=& \sum_{x=1}^{N-1} \Big\{\eta_1(x)(1+\sigma\eta_1(x+1))\,
	\big[f(\eta_0,\eta_1-\delta_{x}+\delta_{x+1})-f(\eta_0,\eta_1)\big]\\
	&+ \eta_1(x+1)(1+\sigma\eta_1(x))\,\big[f(\eta_0,\eta_1-\delta_{x+1}+\delta_{x})-f(\eta_0,\eta_1))\big]\Big\},
	\end{align*}
	\begin{align*}
	(L^{\mathrm{bulk}}_{0 \updownarrow 1}f)(\eta) =& \sum_{x=1}^N \Big\{\eta_0(x)(1+\sigma\eta_1(x))\,
	\big[f(\eta_0-\delta_{x},\eta_{1}+\delta_{x})-f(\eta_0,\eta_1)\big]\\
	&+ \eta_1(x)(1+\sigma\eta_0(x))\,\big[f(\eta_{0}+\delta_{x},\eta_{1}-\delta_{x})-f(\eta_{0},\eta_{1}))\big]\Big\},
	\end{align*}
	and
	\begin{align}
	L^{\mathrm{res}}:=L^{\mathrm{res}}_0+ L^{\mathrm{res}}_1
	\end{align}
	acting as
	\begin{align*}
	&(L^{\mathrm{res}}_0f)(\eta)= \eta_0(1)(1+\sigma\rho_{L,0})\,\big[f(\eta_0-\delta_{1},\eta_{1})-f(\eta_0,\eta_{1})\big]\\
	&\quad + \rho_{L,0}(1+\sigma\eta_0(1))\,\big[f(\eta_0+\delta_{1},\eta_{1})-f(\eta_0,\eta_{1})\big]\\[0.2cm]
	&\nonumber+\eta_0(N)(1+\sigma\rho_{R,0})\,\big[f(\eta_0-\delta_{N},\eta_{1})-f(\eta_0,\eta_{1})\big]
	+ \rho_{R,0}(1+\sigma\eta_0(N))\,\big[f(\eta_0+\delta_{N},\eta)-f(\eta_0,\eta_{1})\big],
	\end{align*}
	\begin{align*}
	& (L^{\mathrm{res}}_1f)(\eta)=\eta_1(1)(1+\sigma\rho_{L,1})\,\big[f(\eta_{0},\eta_1-\delta_{1})-f(\eta_0,\eta_{1})\big]\\
	&\quad +\rho_{L,1}(1+\sigma\eta_1(1))\,\big[f(\eta_{0},\eta_1+\delta_{1})-f(\eta_{0},\eta_{1})\big]\\[0.2cm]
	&+\eta_1(N)(1+\sigma\rho_{R,1})\,\big[f(\eta_{0},\eta_1-\delta_{N})-f(\eta_0,\eta_{1})\big]
	+ \rho_{R,1}(1+\sigma\rho_{R,N})\,\big[f(\eta_{0},\eta_1+\delta_{N})-f(\eta_0,\eta_{1})\big].
	\end{align*}
	
	
	\subsection{Duality and stationary measures}
	\label{ss.dualres}
	In \cite{CGGR13} it was shown that the partial exclusion process, a system of independent random walks and the symmetric inclusion processes on a finite set $V$, coupled with proper left and right reservoirs, are dual to the same particle system but with the reservoirs replaced by absorbing sites. As remarked in  \cite{FRSpr}, the same result holds for more general geometries, consisting of inhomogeneous rates (site and edge dependent), and for many proper reservoirs. Our model is a particular instance of the case treated in \cite[Remark 2.2]{FRSpr}), because we can think of the rate as conductances attached to the edges.
	
	More precisely, we consider the system where particles jump on two copies of 
	$$
	\hat V:= V\cup \{L,R\}
	$$ 
	and follow the same dynamics as before in $V$, but with the reservoirs at $L$ and $R$ absorbing. We denote by $\xi$ the configuration 
	$$
	\xi=(\xi_0,\xi_1):=(\{\xi_0(x)\}_{x\in \hat V},\{\xi_1(x)\}_{x\in \hat V}),
	$$
	where $\xi_i(x)$ denotes the number of particles at site $x$ in layer $i$. The state space is $\hat{\mathcal X} = \N_0^{\hat V}\times \N_0^{\hat V}$, and the generator is
	\begin{equation}
	\label{eq: dual generator IRW with sleeping particles + reservoirs}
	\hat L := \hat{L}_{\epsilon,\gamma,N} = \hat L^{\mathrm{bulk}} + \hat L^{L,R}
	\end{equation}
	with
	\begin{align*}
	&\hat L^{\mathrm{bulk}}:= \hat L_0^{\mathrm{bulk}}+\epsilon \hat L_1^{\mathrm{bulk}} 
	+ \gamma \hat L^{\mathrm{bulk}}_{0 \updownarrow 1}
	\end{align*}
	acting on cylindrical functions $f\colon\,\mathcal X \to \R$ as
	\begin{align*}
	&(\hat L_0^{\mathrm{bulk}}f)(\xi)=\sum_{x=1}^{N-1} \Big\{\xi_0(x)(1+\sigma\xi_0(x+1))\,
	\big[f(\xi_0-\delta_{x}+\delta_{x+1},\xi_1)-f(\xi_0,\xi_1)\big]\\
	&\quad+ \xi_0(x+1)(1+\sigma\xi_0(x))\,\big[f(\xi_0-\delta_{x+1}+\delta_{x},\xi_1)-
	f(\xi_0,\xi_1)\big]\Big\},\\
	&(\hat L_1^{\mathrm{bulk}}f)(\xi)=\sum_{x=1}^{N-1} \Big\{\xi_1(x)(1+\sigma\xi_1(x+1))\,
	\big[f(\xi_0,\xi_1-\delta_{x}+\delta_{x+1})-f(\xi_0,\xi_1)\big]\\
	&\quad+ \xi_1(x+1)(1+\sigma\xi_1(x))\,\big[f(\xi_0,\xi_1-\delta_{x+1}+\delta_{x})-f(\xi_0,\xi_1)\big]\Big\},\\
	&(\hat L^{\mathrm{bulk}}_{0 \updownarrow 1}f)(\eta) =\sum_{x=1}^N \Big\{\xi_0(x)(1+\sigma\xi_1(x))\,
	\big[f(\xi_0-\delta_{x},\xi_{1}+\delta_{x})-f(\xi_0,\xi_1)\big]\\
	&\quad+ \xi_1(x)(1+\sigma\xi_0(x))\,\big[f(\xi_{0}+\delta_{x},\xi_{1}-\delta_{x})-f(\xi_{0},\xi_{1})\big]\Big\},
	\end{align*}
	and
	\begin{align*}
	\hat L^{L,R}=\hat L_0^{L,R}+\hat L_1^{L,R}
	\end{align*}
	acting as
	\begin{align*}
	&(\hat L_0^{L,R}f)(\xi)= \xi_0(1)\,\big[f(\xi_0-\delta_{1},\xi_{1})-f(\xi_0,\xi_{1})\big] 
	+ \xi_0(N)\,\big[f(\xi_0-\delta_{N},\xi_1)-f(\xi_0,\xi_1)\big],\\[0.2cm]
	&(\hat L_1^{L,R}f)(\xi)= \xi_1(1)\,\big[f(\xi_0,\xi_1-\delta_{1})-f(\xi_0,\xi_1)\big]
	+ \xi_1(N)\,\big[f(\xi_0,\xi_1-\delta_{N})-f(\xi_0,\xi_1)\big].
	\end{align*}
	
	\begin{proposition}{{\bf [Duality]} {\rm \cite[Theorem 4.1]{CGGR13} and \cite[Proposition 2.3]{FRSpr}}}
		\label{proposition: duality absorbing sites}
		The Markov processes 
		\begin{align*}
		&\{\eta(t)\colon\,t \geq 0\}, \qquad \eta(t) = \{\eta_0(x,t),\eta_{1}(x,t)\}_{x \in V},\\
		&\{\xi(t)\colon\,t \geq 0\}, \qquad \xi(t) = \{\xi_0(x,t),\xi_{1}(x,t)\}_{x \in \hat V},
		\end{align*}
		with generators $L$ in \eqref{eq: generator IRW with sleeping particles + reservoirs} and $\hat L$ in \eqref{eq: dual generator IRW with sleeping particles + reservoirs} are dual. Namely, for all configurations $\eta\in\mathcal X$, $\xi\in\hat{\mathcal X}$ and times $t\ge 0$, 
		$$ \E_\eta[ D(\xi,\eta_t)]= \E_\xi[ D(\xi_t,\eta)],$$
		where the duality function is given by
		$$
		D(\xi,\eta):= \left(\prod_{i\in I} d_{(L,i)}(\xi_i(L))\right) 
		\times \left(\prod_{x\in V} d(\xi_i(x),\eta_i(x))\right) 
		\times \left(\prod_{i\in I} d_{(R,i)}(\xi_i(R))\right),
		$$
		where,
		for $k,n\in \N$ and $i\in I$, $d(\cdot, \cdot)$ is given in \eqref{eq: single site duality} and
		$$ 
		d_{(L,i)}(k) = \left(\rho_{L,i}\right)^k, \qquad d_{(R,i)}(k) = \left(\rho_{R,i}\right)^k.
		$$
	\end{proposition}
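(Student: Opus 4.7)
The plan is to establish the process-level duality by verifying the generator-level identity
\begin{equation*}
(L\, D(\xi,\cdot))(\eta) \;=\; (\hat L\, D(\cdot,\eta))(\xi), \qquad \eta\in\mathcal X,\ \xi\in\hat{\mathcal X},
\end{equation*}
where on the left $L$ acts in the $\eta$-variable and on the right $\hat L$ acts in the $\xi$-variable. Once this is in place, the semigroup identity $\E_\eta[D(\xi,\eta_t)]=\E_\xi[D(\xi_t,\eta)]$ follows by the standard Kolmogorov-forward/backward argument, provided $D$ is integrable along the dual evolution; here this is automatic because $\hat L$ preserves the total number of dual particles and $D$ is bounded on configurations with a fixed number of dual particles.

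I would decompose $L=L^{\mathrm{bulk}}+L^{\mathrm{res}}$ and $\hat L=\hat L^{\mathrm{bulk}}+\hat L^{L,R}$ and check duality for each piece separately. For the bulk pieces, the verification reduces to the self-duality already stated in Section~\ref{ss.dual}. Indeed, the boundary factors $d_{(L,i)}(\xi_i(L))$ and $d_{(R,i)}(\xi_i(R))$ are inert under $L^{\mathrm{bulk}}$ and $\hat L^{\mathrm{bulk}}$, so the identity on $V$-factors is exactly the site-wise recursion underlying the self-duality of the two-layer exclusion/independent/inclusion dynamics with switching, which has been proved on $\Z\times I$ and carries over verbatim to the finite volume $V\times I$.

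The substance of the proof is the boundary identity. By symmetry it suffices to consider the left reservoir of layer~$0$. Setting $k=\xi_0(1)$, $n=\eta_0(1)$, $\ell=\xi_0(L)$ and factoring
\begin{equation*}
D(\xi,\eta) \;=\; \rho_{L,0}^{\,\ell}\; d(k,n)\; R(\xi,\eta),
\end{equation*}
where $R(\xi,\eta)$ collects the factors untouched by the four terms under comparison, the required equality reduces to the single-site identity
\begin{equation*}
n(1+\sigma\rho_{L,0})\,[d(k,n-1)-d(k,n)] + \rho_{L,0}(1+\sigma n)\,[d(k,n+1)-d(k,n)] \;=\; k\,[\rho_{L,0}\,d(k-1,n)-d(k,n)],
\end{equation*}
together with its obvious counterparts at the right boundary and for layer~$1$. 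This identity is checked case by case in $\sigma\in\{-1,0,1\}$ using the explicit form of $d(k,n)$ in \eqref{eq: single site duality}, more precisely the elementary recursions $d(k,n+1)=\tfrac{n+1}{n+1-k}\,d(k,n)$ and $d(k+1,n)=\tfrac{n-k}{w(k+1)/w(k)}\,d(k,n)$.

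The main obstacle is carrying out this algebraic verification uniformly in $\sigma$: the exclusion case ($\sigma=-1$) is trivial because of the hard-core constraint $n,k\in\{0,1\}$, whereas for $\sigma=+1$ one has to keep track of the $\Gamma$-factor normalizations $w(k)$ in the inclusion case and recognize that the terms $(1+\sigma\rho_{L,0})$ and $(1+\sigma n)$ on the left combine with the recursions above to produce precisely the coefficient $k$ of the dual absorption term on the right. Once the four boundary identities are verified, the product structure of $D$ and the locality of each boundary term in $L^{\mathrm{res}}$ and $\hat L^{L,R}$ assemble them into the claimed generator identity, completing the proof.
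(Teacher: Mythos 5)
Your proposal is correct and follows essentially the same route as the paper, which likewise reduces the statement to the generator identity $\hat L D(\cdot,\eta)(\xi)=LD(\xi,\cdot)(\eta)$ and defers the verification to a rewriting of the proof of \cite[Theorem 4.1]{CGGR13}. Your single-site boundary identity is the correct reduction and does check out for all $\sigma\in\{-1,0,1\}$ with the stated recursions for $d(k,n)$, so you have in fact supplied more of the computation than the paper records.
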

	The proof boils down to checking that the relation 
	$$\hat L D(\cdot,\eta)(\xi)=LD(\xi,\cdot)(\eta)$$
	holds for any $\xi\in \mathcal X$ and $\xi \in\hat{\mathcal X}$, as follows from a rewriting of the proof of \cite[Theorem 4.1]{CGGR13}.
	
	
	\subsection{Non-equilibrium stationary profile}
	\label{ss.profileres}
	
	Also the existence and uniqueness of the non-equilibrium steady state has been established in \cite[Theorem 3.3]{FRSpr} for general geometries, and the argument in that paper can be easily adapted to our setting. 
	\begin{theorem}{{\bf [Stationary measure]} {\rm \cite[Theorem 3.3(a)]{FRSpr}\label{theorem:existence_uniqueness}}}
		For $\sigma\in\{-1,0,1\}$ there exists a unique stationary measure $\mu_{stat} \ $ for $\{\eta(t)\colon\,t \geq 0\}$.
		Moreover, for $\sigma=0$ and for any values of $\{\rho_{L,0}, \ \rho_{L,1}, \ \rho_{R,0}, \ \rho_{R,1}\}$, 
		\begin{equation}
		\label{eq:mustat=poisson}
		\mu_{stat} =\prod_{(x,i) \in V \times I} \nu_{(x,i)},
		\qquad \nu_{(x,i)} = \mathrm{Poisson}(\theta_{(x,i)}),
		\end{equation}
		while, for $\sigma\in\{-1,1\}$, $\mu_{stat}$ is in general not in product form, unless $\rho_{L,0}= \rho_{L,1}=\rho_{R,0}= \rho_{R,1}$, for which 
		\begin{equation}\label{eq: stat meas SEP e SIP }
		\mu_{stat} =\prod_{(x,i) \in V \times I} \nu_{(x,i), \theta},
		\end{equation}
		where $\nu_{(x,i), \theta}$ is given in \eqref{eq:marginals}.
	\end{theorem}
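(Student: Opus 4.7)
The plan is to treat the three assertions of the theorem separately: (i) existence and uniqueness of $\mu_{\mathrm{stat}}$ for each $\sigma \in \{-1,0,1\}$, (ii) identification with a product of Poisson measures when $\sigma=0$, and (iii) reduction to the homogeneous reversible product of \eqref{eq:marginals} when $\sigma\in\{-1,1\}$ and all four reservoir densities coincide. The overall strategy mimics \cite[Theorem 3.3]{FRSpr}; the only genuinely new ingredient is the two-layer bookkeeping induced by the switching.

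For (i), when $\sigma=-1$ the state space $\{0,1\}^V\times\{0,1\}^V$ is finite, existence is automatic, and uniqueness follows from irreducibility: any two configurations are connected by a chain of injection, hopping, switching, and absorption transitions, each of positive rate. When $\sigma\in\{0,1\}$ the state space is countably infinite, and I would establish a Foster--Lyapunov bound $LV \le -cV + K$ for $V(\eta)=\sum_{(x,i)\in V\times I}\eta_i(x)^k$ with $k\ge 2$, exploiting that the absorption rates at sites $1$ and $N$ grow with the local occupation while injection rates are bounded in $\eta$, and that the switching generator conserves the total particle number. This yields tightness of the time averages, hence existence, while bulk irreducibility gives uniqueness.

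For (ii) I would verify directly that $\mu_{\ast}=\bigotimes_{(x,i)\in V\times I}\mathrm{Poisson}(\theta_{(x,i)})$ is stationary, with intensities
\begin{equation*}
\theta_{(x,i)}=\E_{(x,i)}\bigl[\rho_{L,j_\tau}\,\1_{\{X_\tau=L\}}+\rho_{R,j_\tau}\,\1_{\{X_\tau=R\}}\bigr],
\end{equation*}
where $\{(X_t,j_t)\}_{t\ge 0}$ is the dual single-particle random walk on $\hat V\times I$ from Proposition~\ref{proposition: duality absorbing sites} and $\tau$ is its absorption time at $\{L,R\}\times I$; equivalently, $\theta$ is the unique solution of the discrete two-layer boundary-value problem $\Delta\theta_{(x,0)}+\gamma(\theta_{(x,1)}-\theta_{(x,0)})=0$ and $\epsilon\Delta\theta_{(x,1)}+\gamma(\theta_{(x,0)}-\theta_{(x,1)})=0$ for $x\in V$, with boundary data $\theta_{(L,i)}=\rho_{L,i}$ and $\theta_{(R,i)}=\rho_{R,i}$. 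For $\sigma=0$ the hopping part acts independently on each layer and preserves products of Poissons, and the switching term does so precisely when the above balance holds; a cleaner alternative is to apply self-duality to the one- and two-particle duality functions and let $t\to\infty$, which shows that all factorial moments of $\mu_{\mathrm{stat}}$ coincide with those of $\mu_{\ast}$ and hence, by uniqueness from (i), that the two measures agree. For (iii), the constant profile $\theta_{(x,i)}\equiv\theta$ trivially solves the boundary-value problem, the measure $\mu_\theta$ of \eqref{eq:marginals} is reversible for the bulk generator by Section~\ref{ss.dual}, and the reservoir rates were tuned via the factors $(1+\sigma\rho_{\cdot,i})$ so that detailed balance also holds across the boundary edges; a short check on cylindrical functions concludes.

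The main obstacle I anticipate is the Foster--Lyapunov estimate in the inclusion case ($\sigma=1$), where both bulk hopping and switching contain quadratic occupation terms $\eta_i(x)\eta_j(y)$ that must be dominated by the essentially linear absorption at sites $1$ and $N$. This can likely be handled by choosing $V$ with site-dependent weights that grow with distance from the boundary, or by bootstrapping from a first-moment estimate to the second moment via duality; neither modification is conceptually new, but the bookkeeping is more delicate than in the single-layer setting of \cite{FRSpr}.
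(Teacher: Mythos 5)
Your proposal splits into a sound half and a gapped half. The identification step (ii) via self-duality is essentially the paper's route: the authors compute $\lim_{t\to\infty}\E_\eta[D(\xi,\eta_t)]=\lim_{t\to\infty}\hat\E_\xi[D(\xi_t,\eta)]$ explicitly as a sum over the absorption sites of the dual particles, weighted by powers of the reservoir densities, and read off the Poisson product for $\sigma=0$ and the homogeneous product \eqref{eq: stat meas SEP e SIP } when all four densities coincide. Where you diverge is in part (i), and there the gap is real. Your Foster--Lyapunov argument rests on the premise that ``absorption rates grow with the local occupation while injection rates are bounded in $\eta$,'' but for $\sigma=1$ the injection rate at site $1$ is $\rho_{L,0}(1+\eta_0(1))$, which is \emph{not} bounded in $\eta$; only the difference of the boundary birth and death rates is linearly confining. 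More seriously, applying $L$ to $\sum_{(x,i)}\eta_i(x)^k$ with $k\ge 2$ produces positive cross terms $\eta_i(x)\eta_i(y)$ from the bulk inclusion interaction and $\eta_0(x)\eta_1(x)$ from the switching, and these are not dominated by the boundary dissipation, which acts only at sites $1$ and $N$ and only on low powers. You flag this yourself as the ``main obstacle'' but do not resolve it, so existence for $\sigma=1$ is not established. The paper avoids the issue entirely: duality gives that every $D$-moment converges to an explicit limit bounded by $\max\{\rho_{L,0},\rho_{R,0},\rho_{L,1},\rho_{R,1}\}^{|\xi|}$, these limits characterize a unique \emph{tempered} probability measure $\mu_{stat}$ (in the sense of \cite[Appendix A]{FRSpr}), and convergence of the moments upgrades to $\lim_{t\to\infty}\E_\mu[f(\eta_t)]=\E_{\mu_{stat}}[f(\eta)]$ for all bounded $f$ and all initial $\mu$, which yields existence, uniqueness and convergence in one stroke. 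If you want to salvage your scheme, the second-moment bound should be obtained by duality (two dual particles, as in \eqref{eq: bound duality any time} of the hydrodynamic section), not by a Lyapunov computation.

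A second, smaller omission: the theorem also asserts that for $\sigma\in\{-1,1\}$ the stationary measure is \emph{in general not} in product form when the reservoir densities differ. Your proposal never addresses this negative statement. The paper proves it by showing, arguing by contradiction as in \cite[Theorem 3.3]{FRSpr}, that the two-point truncated correlations are nonzero whenever at least two reservoir parameters differ; some such correlation computation (again via two dual particles) is needed to complete the proof.
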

	\begin{proof}
		For $\sigma=-1$, the existence and uniqueness of the stationary measure is trivial by the irreducibility and the finiteness of the state space of the process. For $\sigma\in\{0,1\}$, recall from \cite[Appendix A]{FRSpr} that a probability measure $\mu$ on $\mathcal X$ is said to be tempered if it is characterized by the integrals $\big\{\E_\mu [D(\xi,\eta)]\,:\,\  \xi\in\hat{\mathcal X} \big\}$  and that if there exists a $\theta\in[0,\infty)$ such that $\E_\mu[ D(\xi,\eta)]\le \theta^{|\xi|}$ for any $\xi\in\hat{\mathcal X}$. By means of duality we have that, for any $\eta\in \mathcal X$ and $\xi\in \hat{\mathcal X}$,
		\begin{align}\label{eq: conto duality stat meas}
		\nonumber &\lim_{t\to\infty}\E_\eta[D(\xi,\eta_t)]=\lim_{t\to\infty}\hat{\E}_\xi[D(\xi_t,\eta)]\\
		&= \sum_{i_0=0}^{|\xi|}\sum_{i_{0,L}=0}^{i_0}\sum_{j_{1,L}=0}^{|\xi|-i_0} \rho_{L,0}^{i_{0,L}}\ \rho_{R,0}^{i_0-i_{0,L}}\ \rho_{L,1}^{i_{1,L}}\ \rho_{R,1}^{|\xi|-i_{0}-i_{1,L}} \\
		&\qquad \qquad \qquad \qquad\qquad \quad\times\hat{\P}_\xi\left( \xi_\infty=  i_{0,L}\delta_{(L,0)}+(i_0-i_{0,L})\delta_{(R,0)} + i_{1,L} \delta_{(L,1)}+ (|\xi|-i_{0}-i_{1,L})  \delta_{(R,1)}\right),
		\end{align}
		from which we conclude that $\lim_{t\to\infty}\E_\eta[D(\xi,\eta_t)]\le \max\{\rho_{L,0}, \rho_{R,0}, \rho_{L,1}, \rho_{R,1} \}^{|\xi|}$. Let $\mu_s$ be the unique tempered probability measure such that for any $\xi\in\hat{\mathcal X},$ $\E_{\mu_{stat}}[D(\xi,\eta)]$ coincides with \eqref{eq: conto duality stat meas}. From the convergence of the marginal moments in \eqref{eq: conto duality stat meas}  we conclude that, for any $f:\mathcal X\to\R$ bounded and for any $\eta\in \mathcal X$, 
		$$\lim_{t\to \infty}\E_\eta[f(\eta_t)]=\E_{\mu_{stat}}[f(\eta)].$$
		Thus, a dominated convergence argument yields that for any probability measure $\mu$ on $\mathcal X$,
		$$\lim_{t\to \infty}\E_\mu[f(\eta_t)]=\E_{\mu_{stat}}[f(\eta)],$$ 
		giving that $\mu_{stat}$ is the unique stationary measure.
		The explicit expression in \eqref{eq:mustat=poisson} and \eqref{eq: stat meas SEP e SIP } follows from similar computations as in \cite{CGGR13}, while, arguing by contradiction as in the proof of \cite[Theorem 3.3]{FRSpr}, we can show that the two-point truncated correlations are non-zero for $\sigma\in\{-1,1\}$  whenever at least two  reservoir parameters are different.
	\end{proof}

\subsubsection{Stationary microscopic profile and absorption probability}

In this section we provide an explicit expression for the stationary microscopic density of each type of particle. To this end, let $\mu_{stat}$ be the unique non-equilibrium stationary measure of the process 
$$
\{\eta(t)\colon\,t \geq 0\}, \qquad \eta(t) := \{\eta_0(x,t),\eta_{1}(x,t)\}_{x\in V},
$$
and let $\{\theta_0({x}),\theta_1({x})\}_{x\in V}$ be the stationary microscopic profile, i.e., for $x\in V$ and $i\in I$,
\begin{equation}
\label{eq:definition of microscopic profile}
\theta_i({x})=\E_{\mu_{stat}}[\eta_i(x,t)].
\end{equation}
Write $\P_\xi$ (and $\E_\xi$) to denote the law (and the expectation) of the dual Markov process 
$$
\{\xi(t)\colon\,t \geq 0\}, \qquad \xi(t):=\{\xi_0(x,t),\xi_{1}(x,t)\}_{x\in \hat V},
$$ 
starting from $\xi=\{\xi_{0}(x),\xi_{1}(x)\}_{x\in\hat V}$. For $x\in V$, set
\begin{equation}
\label{eq:vector definition of p,q}
\begin{aligned}
\vec{p}_x
&:= \big[
\begin{array}{c c c c}
\hat p(\delta_{(x,0)}, \delta_{(L,0)}) 
& \hat p(\delta_{(x,0)}, \delta_{(L,1)}) 
& \hat p(\delta_{(x,0)}, \delta_{(R,0)}) 
& \hat p(\delta_{(x,0)}, \delta_{(R,1)})
\end{array}
\big]^T,\\
\vec{q}_x
&:=\big[
\begin{array}{c c c c}
\hat p(\delta_{(x,1)}, \delta_{(L,0)}) 
& \hat p(\delta_{(x,1)}, \delta_{(L,1)}) 
& \hat p(\delta_{(x,1)}, \delta_{(R,0)}) 
& \hat p(\delta_{(x,1)}, \delta_{(R,1)})
\end{array}
\big]^T,
\end{aligned}
\end{equation}
where
\begin{equation}
\label{eq: definition of absorption probability}
\hat p(\xi, \tilde \xi)=\lim\limits_{t\to\infty}\P_\xi(\xi(t) =\tilde \xi),\quad
\xi=\delta_{(x,i)} \text{ for some } (x,i)\in V\times I,\ \tilde{\xi}\in\{\delta_{(L,0)},\delta_{(L,1)},\delta_{(R,0)},\delta_{(R,1)}\},
\end{equation}
and let 
\begin{equation}
\label{eq:vector definition of rho}
\vec{\rho}
:=\big[
\begin{array}{c c c c}
\rho_{(L,0)} & \rho_{(L,1)} & \rho_{(R,0)} & \rho_{(R,1)}
\end{array}
\big]^T.
\end{equation}
Note that $\hat p(\delta_{(x,i)}, \cdot)$ is the probability of the dual process, starting from a single particle at site $x$ at layer $i\in I$, of being absorbed at one of the four reservoirs. Using Proposition~\ref{proposition: duality absorbing sites} and Theorem~\ref{theorem:existence_uniqueness}, we obtain the following.

\begin{corollary}{\bf [Dual representation of stationary profile]}
\label{corollary:dual representation of micro-profile}
For $x\in V$, the microscopic stationary profile is given by
\begin{equation}
\label{eq:microscopic profile}
\begin{aligned}
\theta_0(x) &= \vec{p}_x\, \cdot \,\vec{\rho},\\ 
\theta_1(x) &= \vec{q}_x\, \cdot\, \vec{\rho},
\end{aligned}
\qquad x\in\{1,\ldots,N\},
\end{equation}
where $\vec{p}_x,\vec{q}_x$ and $\vec{\rho}$ are as in \eqref{eq:vector definition of p,q}--\eqref{eq:vector definition of rho}.
\end{corollary}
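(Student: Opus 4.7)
The plan is a direct application of the duality proposition together with the characterisation of the stationary measure via the convergence argument that already appears in the proof of Theorem~\ref{theorem:existence_uniqueness}. The key observation is that the duality function collapses to a particularly simple form when one of its arguments is a one-particle configuration.

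First, I would evaluate $D(\delta_{(x,i)},\eta)$ for $(x,i)\in V\times I$. All factors indexed by the reservoirs contribute $d_{(L,j)}(0)=d_{(R,j)}(0)=1$, and all bulk factors $d(\xi_j(y),\eta_j(y))$ with $(y,j)\neq (x,i)$ reduce to $d(0,\eta_j(y))=1$. The surviving factor $d(1,\eta_i(x))$ equals $\eta_i(x)$ in all three cases $\sigma\in\{-1,0,1\}$, because $w(1)=1$ everywhere. Hence $D(\delta_{(x,i)},\eta)=\eta_i(x)$. A similar computation shows that, for $\tilde\xi\in\{\delta_{(L,0)},\delta_{(L,1)},\delta_{(R,0)},\delta_{(R,1)}\}$, the quantity $D(\tilde\xi,\eta)$ does not depend on $\eta$ and equals precisely the corresponding reservoir density in $\vec{\rho}$.

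Next, using stationarity of $\mu_{stat}$ and Proposition~\ref{proposition: duality absorbing sites}, for every $t\geq 0$,
\begin{equation*}
\theta_i(x)=\E_{\mu_{stat}}[\eta_i(x)]=\E_{\mu_{stat}}\bigl[\E_\eta[D(\delta_{(x,i)},\eta_t)]\bigr]=\E_{\mu_{stat}}\bigl[\E_{\delta_{(x,i)}}[D(\xi_t,\eta)]\bigr].
\end{equation*}
I would then send $t\to\infty$. Since the dual process $\{\xi(t)\}_{t\geq 0}$ starts from a single particle, remains a one-particle process at all times, and the state space $\hat V\times I$ is finite with $\{(L,0),(L,1),(R,0),(R,1)\}$ absorbing and accessible from every bulk site, the dual particle is absorbed almost surely at one of these four sites, and the limiting distribution is encoded by the probabilities $\hat p(\delta_{(x,i)},\tilde\xi)$ from \eqref{eq: definition of absorption probability}. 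By bounded convergence (the duality function is bounded on the one-particle sector and does not depend on $\eta$ in the limit),
\begin{equation*}
\lim_{t\to\infty}\E_{\delta_{(x,i)}}[D(\xi_t,\eta)]=\sum_{\tilde\xi}\hat p(\delta_{(x,i)},\tilde\xi)\,D(\tilde\xi,\eta),
\end{equation*}
where the sum ranges over the four absorbing one-particle configurations.

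Combining these steps yields $\theta_i(x)=\sum_{\tilde\xi}\hat p(\delta_{(x,i)},\tilde\xi)\,\rho_{\tilde\xi}$, which is exactly $\vec{p}_x\cdot\vec{\rho}$ for $i=0$ and $\vec{q}_x\cdot\vec{\rho}$ for $i=1$. There is no genuine obstacle here; the only careful points are the explicit simplification of the duality function on one-particle configurations (and on configurations supported at the reservoirs) and the justification that the dual one-particle process is absorbed almost surely, so that the $t\to\infty$ limit can be passed under the expectation.
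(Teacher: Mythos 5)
Your argument is correct and is exactly the route the paper takes: the corollary is stated as an immediate consequence of Proposition~\ref{proposition: duality absorbing sites} and Theorem~\ref{theorem:existence_uniqueness}, whose proof already contains the computation in \eqref{eq: conto duality stat meas}; your derivation is just that computation specialised to a one-particle dual configuration $\xi=\delta_{(x,i)}$, using $D(\delta_{(x,i)},\eta)=\eta_i(x)$ and $D(\tilde\xi,\eta)=\rho_{\tilde\xi}$ for the absorbed configurations. The only cosmetic imprecision is the phrase ``bounded convergence'': for $\sigma\in\{0,1\}$ the function $\eta\mapsto\eta_i(x)$ is unbounded, so the passage to the limit under $\E_{\mu_{stat}}$ rests on the almost-sure absorption of the dual particle together with the uniform first-moment bound on $\mu_{stat}$ (as in the proof of Theorem~\ref{theorem:existence_uniqueness}), not on literal boundedness.
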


We next compute the absorption probabilities associated to the dual process in order to obtain a more explicit expression for the stationary microscopic profile $\{\theta_0({x}),\theta_1({x})\}_{x\in V}$. The absorption probabilities $\hat{p}(\cdot\,,\cdot)$ of the dual process satisfy 
$$
(\hat L \hat p)(\cdot, \tilde \xi)(\xi) = 0\qquad\forall\,\xi\in\hat{\mathcal{X}},
$$
where $\hat L$ is the dual generator defined in \eqref{eq: dual generator IRW with sleeping particles + reservoirs}, i.e., they are harmonic functions for the generator $\hat{L}$.

In matrix form, the above translates into the following systems of equations:
\begin{equation}
\label{eq:linear system for absorption probability}
\begin{aligned}
\vec{p}_1 &= \frac{1}{2 + \gamma}\,(\vec{p}_0+\vec{p}_2)+\frac{\gamma}{2+\gamma}\,\vec{q}_1,\\
\vec{q}_1 &= \frac{\epsilon}{(1+\epsilon)+\gamma}\,\vec{q}_2+\frac{1}{(1+\epsilon)+\gamma}\,\vec{q}_0
+\frac{\gamma}{(1+\epsilon)+\gamma}\,\vec{p}_1,\\
\vec{p}_x &= \frac{1}{2+\gamma}\,(\vec{p}_{x-1}+\vec{p}_{x+1})+\frac{\gamma}{2+\gamma}\,\vec{q}_x,
&x \in \{2, \ldots, N-1\},\\
\vec{q}_x &= \frac{\epsilon}{2\epsilon+\gamma}\,(\vec{q}_{x-1}+\vec{q}_{x+1})
+\frac{\gamma}{2\epsilon+\gamma}\,\vec{p}_x,
&x \in \{2, \ldots, N-1\},\\
\vec{p}_N &=\frac{1}{2+\gamma}\,(\vec{p}_{N-1}+\vec{p}_{N+1})+\frac{\gamma}{2+\gamma}\,\vec{q}_N,\\
\vec{q}_N &=  \frac{\epsilon}{(1+\epsilon)+\gamma}\,\vec{q}_{N-1}
+\frac{1}{(1+\epsilon)+\gamma}\,\vec{q}_{N+1}+\frac{\gamma}{(1+\epsilon)+\gamma}\,\vec{p}_N,
\end{aligned}
\end{equation}
where 
$$
\begin{aligned}
&\vec{p}_0 := \big[
\begin{array}{c c c c}
1 & 0 & 0 & 0
\end{array}
\big]^T,& 
&\vec{q}_{0} :=\big[
\begin{array}{c c c c}
0 & 1 & 0 & 0
\end{array}
\big]^T,\\
&\vec{p}_{N+1} := \big[
\begin{array}{c c c c}
0 & 0 & 1 & 0
\end{array}
\big]^T,& 
&\vec{q}_{N+1} := \big[
\begin{array}{c c c c}
0 & 0 & 0 & 1
\end{array}
\big]^T.
\end{aligned}
$$
We divide the analysis of the absorption probabilities into two cases: $\epsilon=0$ and $\epsilon>0$.


\paragraph{Case $\epsilon=0$.}

\begin{proposition}{\bf[Absorption probability for $\epsilon=0$]} 
\label{prop:absobption probability for epsilon=0}
Consider the dual process 
$$
\{\xi(t)\colon\,t \geq 0\}, \qquad \xi(t) = \{\xi_0(x,t),\xi_{1}(x,t)\}_{x \in V},
$$
with generator $\hat{L}_{\epsilon,\gamma,N}$ (see \eqref{eq: dual generator IRW with sleeping particles + reservoirs}) with $\epsilon=0$. Then for the dual process, starting from a single particle, the absorption probabilities $\hat{p}(\cdot,\cdot)$ (see \eqref{eq: definition of absorption probability}) are given by
\begin{equation}
\begin{aligned}
\hat p(\delta_{(x,0)}, \delta_{(L,0)})
&=\frac{1+\gamma}{1+2\gamma}\left(\frac{(1+N)+(1+2N)\,\gamma}{1+N+2N\gamma}
-\frac{1+2\gamma}{1+N+2N\gamma}\,x\right),\\
\hat p(\delta_{(x,0)}, \delta_{(L,1)})
&=\frac{\gamma}{1+2\gamma}\left( \frac{(1+N)+(1+2N)\,\gamma}{1+N+2N\gamma}
-\frac{1+2\gamma}{1+N+2N\gamma}\,x\right),\\
\hat p(\delta_{(x,0)}, \delta_{(R,0)})
&=\frac{1+\gamma}{1+2\gamma}\left( \frac{-\gamma}{1+N+2N\gamma}
+\frac{1+2\gamma}{1+N+2N\gamma}\,x\right),\\
\hat p(\delta_{(x,0)}, \delta_{(R,1)})
&=\frac{\gamma}{1+2\gamma}\left( \frac{-\gamma}{1+N+2N\gamma}+\frac{1+2\gamma}{1+N+2N\gamma}\,x\right),
\end{aligned}
\end{equation} 
\begin{equation}
\begin{aligned}
\hat p(\delta_{(1,1)}, \delta_{(L,0)}) 
&=\frac{\gamma\,(N-\gamma+2N\gamma)}{(1+2\gamma)(1+N+2N\gamma)},\quad
\hat{p}(\delta_{(1,1)}, \delta_{(L,1)})
=\frac{1+N+(1+3N)\gamma-(1-2N)\gamma^2}{(1+2\gamma)(1+N+2N\gamma)},\\
\hat p(\delta_{(1,1)}, \delta_{(R,0)})
&=\frac{\gamma(1+\gamma)}{(1+2\gamma)(1+N+2N\gamma)},\quad
\hat p(\delta_{(1,1)}, \delta_{(R,1)})
=\frac{\gamma^2}{(1+2\gamma)(1+N+2N\gamma)},
\end{aligned}
\end{equation}
and
\begin{equation}
\hat p(\delta_{(x,1)}, \delta_{(\beta,i)})
=\hat p(\delta_{(x,0)}, \delta_{(\beta,i)}), \qquad x\in \{2,\ldots,N-1\},\,(\beta,i)\in \{L,R\}\times I, 
\end{equation}
and 	
\begin{equation}
\begin{aligned}
\hat p(\delta_{(N,1)}, \delta_{(L,0)})
&=\hat{p}(\delta_{(1,1)}, \delta_{(R,0)}),\quad
\hat{p}(\delta_{(N,1)}, \delta_{(L,1)})
=\hat{p}(\delta_{(1,1)}, \delta_{(R,1)}),\\
\hat p(\delta_{(N,1)}, \delta_{(R,0)})
&=\hat{p}(\delta_{(1,1)}, \delta_{(L,0)}),\quad
\hat p(\delta_{(N,1)}, \delta_{(R,1)})
=\hat{p}(\delta_{(1,1)}, \delta_{(L,1)}).
\end{aligned}
\end{equation}
\end{proposition}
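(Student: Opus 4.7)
The plan is to solve the linear system \eqref{eq:linear system for absorption probability} specialised to $\epsilon=0$ for each of the four targets $\tilde\xi\in\{\delta_{(L,0)},\delta_{(L,1)},\delta_{(R,0)},\delta_{(R,1)}\}$. For fixed $\tilde\xi$ I write $p_x:=\hat p(\delta_{(x,0)},\tilde\xi)$ and $q_x:=\hat p(\delta_{(x,1)},\tilde\xi)$, so that each of the four target choices corresponds to exactly one of $\{p_0,q_0,p_{N+1},q_{N+1}\}$ being $1$ and the other three being $0$. The key observation driving the whole computation is that at $\epsilon=0$ a dual particle on layer $1$ at an interior site $x\in\{2,\ldots,N-1\}$ has no moves other than switching layers at rate $\gamma$; hence the interior equation for $\vec q_x$ in \eqref{eq:linear system for absorption probability} collapses to $q_x=p_x$. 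This immediately yields the identity $\hat p(\delta_{(x,1)},\delta_{(\beta,i)})=\hat p(\delta_{(x,0)},\delta_{(\beta,i)})$ for all interior $x$, accounting for one of the four listed equalities.

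Feeding $q_x=p_x$ back into the interior equations for $p_x$ gives the discrete harmonicity relation $2p_x=p_{x-1}+p_{x+1}$ on $x\in\{2,\ldots,N-1\}$, which forces $p_x=A+Bx$ on $\{1,\ldots,N\}$ for two target-dependent constants $A,B$. To pin down $A,B$, the plan is to eliminate $q_1$ and $q_N$ from the boundary equations using their $\epsilon=0$ specialisations
\begin{equation*}
q_1=\tfrac{1}{1+\gamma}q_0+\tfrac{\gamma}{1+\gamma}p_1,\qquad q_N=\tfrac{1}{1+\gamma}q_{N+1}+\tfrac{\gamma}{1+\gamma}p_N,
\end{equation*}
which reduces the boundary equations for $p_1$ and $p_N$ to
\begin{equation*}
(2+3\gamma)\,p_1=(1+\gamma)(p_0+p_2)+\gamma\,q_0,\qquad(2+3\gamma)\,p_N=(1+\gamma)(p_{N-1}+p_{N+1})+\gamma\,q_{N+1}.
\end{equation*}
Inserting the ansatz $p_x=A+Bx$ turns this pair into a $2\times 2$ linear system for $(A,B)$; its unique solution, evaluated target by target, produces the four explicit formulas claimed for $\hat p(\delta_{(x,0)},\delta_{(\beta,i)})$. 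A short verification that $1+N+2N\gamma$ is the common denominator (coming from the determinant of the $2\times 2$ system) is the only nontrivial algebraic check.

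The formulas for $\hat p(\delta_{(1,1)},\cdot)$ then follow by substituting the obtained $p_1=A+B$ into $q_1=\tfrac{1}{1+\gamma}q_0+\tfrac{\gamma}{1+\gamma}p_1$ for each of the four targets, which produces the stated rational expressions after straightforward simplification. The formulas for $\hat p(\delta_{(N,1)},\cdot)$ can either be obtained the same way from the analogous boundary identity at $N$, or, more cleanly, deduced from the left-right reflection symmetry $x\mapsto N+1-x$ of the dual process on $V$, which interchanges the targets $\delta_{(L,i)}\leftrightarrow\delta_{(R,i)}$ and gives the four identities $\hat p(\delta_{(N,1)},\delta_{(L,i)})=\hat p(\delta_{(1,1)},\delta_{(R,i)})$ stated in the proposition. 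There is no conceptual obstacle: the entire argument is routine $2\times 2$ linear algebra enabled by the interior collapse $q_x=p_x$; the only real burden is careful bookkeeping across the four target vectors and simplifying the resulting rational expressions into the stated closed forms.
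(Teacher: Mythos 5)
Your proof is correct and follows essentially the same route as the paper: both hinge on the $\epsilon=0$ collapse $q_x=p_x$ at interior sites, which yields discrete harmonicity and hence affineness of $p_x$ on $\{1,\ldots,N\}$, after which only the boundary equations remain to be solved (the paper keeps a $4\times4$ system in $\vec p_1,\vec c,\vec q_1,\vec q_N$, whereas you pre-eliminate $q_1,q_N$ down to a $2\times2$ system in the affine coefficients --- a purely organizational difference). Your reduced boundary equations and the determinant $(1+2\gamma)(1+N+2N\gamma)$ of the resulting $2\times2$ system are correct, so the stated closed forms follow as you claim.
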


\begin{proof}
Note that, for $\epsilon=0$, from the linear system in \eqref{eq:linear system for absorption probability} we get
\begin{equation}
\begin{aligned}
&\vec{p}_{x+1}-\vec{p}_{x}=\vec{p}_{x}-\vec{p}_{x-1},\\
&\vec{q}_{x}=\vec{p}_x,
\end{aligned}
\quad x\in\{2,\ldots,N-1\}.
\end{equation}
Thus, if we set $\vec{c}=\vec{p}_{2}-\vec{p}_{1}$, then it suffices to solve the following 4 linear equations with 4 unknowns $\vec{p}_1,\vec{c},\vec{q}_1$, $\vec{q}_N$:
\begin{equation}
\begin{aligned}
&\vec{p}_1 = \frac{1}{2 + \gamma}\,(\vec{p}_0+\vec{p}_1+\vec{c})+\frac{\gamma}{2+\gamma}\,\vec{q}_1,\\
&\vec{q}_1 = \frac{1}{1+\gamma}\,\vec{q}_0
+\frac{\gamma}{1+\gamma}\,\vec{p}_1,\\
&\vec{p}_1+(N-1)\vec{c} =\frac{1}{2+\gamma}\,(\vec{p}_1+(N-2)\vec{c}+\vec{p}_{N+1})
+\frac{\gamma}{2+\gamma}\,\vec{q}_N,\\
&\vec{q}_N = 
\frac{1}{1+\gamma}\,\vec{q}_{N+1}+\frac{\gamma}{1+\gamma}\,(\vec{p}_1+(N-1)\vec{c}).
\end{aligned}
\end{equation}
Solving the above equations we get the desired result.
\end{proof}

As a consequence, we obtain the stationary microscopic profile for the original process $\{\eta(t)\colon\,t \geq 0\}, \ \eta(t) = \{\eta_0(x,t),\eta_{1}(x,t)\}_{x \in V}$ when $\epsilon=0$.

\begin{theorem}{\bf[Stationary microscopic profile for $\epsilon=0$]}
\label{corollary:microscopic profile for epsilon=0}
$\mbox{}$\\	
The stationary microscopic profile $\{\theta_0(x),\theta_1(x)\}_{x\in V}$ (see \eqref{eq:definition of microscopic profile}) for the process $\{\eta(t)\colon\,t \geq 0\}$ with $\eta(t) = \{\eta_0(x,t),\eta_{1}(x,t)\}_{x \in V}$ with generator $L_{\epsilon,\gamma,N}$ (see \eqref{eq: generator IRW with sleeping particles + reservoirs}) and $\epsilon=0$ is given by
\begin{equation}
\label{eq:microscopic profile bottom layer e=0}
\begin{aligned}
\theta_0(x) 
&= \frac{1+\gamma}{1+2\gamma}\left[\left( \tfrac{(1+N)+(1+2N)\,\gamma}{1+N+2N\gamma}
-\tfrac{1+2\gamma}{1+N+2N\gamma}\,x \right)\rho_{L,0}
+\left( \tfrac{-\gamma}{1+N+2N\gamma}+\tfrac{1+2\gamma}{1+N+2N\gamma}\,x\right)\rho_{R,0}\right]\\
&\qquad +\frac{\gamma}{1+2\gamma}\left[\left( \tfrac{(1+N)+(1+2N)\,\gamma}{1+N+2N\gamma}
-\tfrac{1+2\gamma}{1+N+2N\gamma}\,x\right)\rho_{(L,1)}
+\left( \tfrac{-\gamma}{1+N+2N\gamma}+\tfrac{1+2\gamma}{1+N+2N\gamma}\,x\right)\rho_{(R,1)}\right]
\end{aligned}
\end{equation}
and
\begin{equation}
\label{eq:microscopic profile top layer e=0}
\begin{aligned}
\theta_1(1) &= \frac{\gamma}{1+\gamma}\,\theta_0(1)+\frac{1}{1+\gamma}\,\rho_{(L,1)},\\
\theta_1(x) &=\theta_0(x), &x\in\{2,\ldots,N-1\},\\
\theta_1(N) &= \frac{\gamma}{1+\gamma}\,\theta_0(N)+\frac{1}{1+\gamma}\,\rho_{(R,1)}.
\end{aligned}
\end{equation}
\end{theorem}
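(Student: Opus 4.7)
The plan is to derive the theorem as a direct corollary of Proposition \ref{prop:absobption probability for epsilon=0} (absorption probabilities for $\epsilon=0$) and Corollary \ref{corollary:dual representation of micro-profile} (dual representation of the stationary profile). No additional probabilistic input is needed; the argument is essentially a linear algebra computation.

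First I would apply Corollary \ref{corollary:dual representation of micro-profile} to write
\[
\theta_0(x) = \vec{p}_x\cdot\vec{\rho}, \qquad \theta_1(x) = \vec{q}_x\cdot\vec{\rho},
\]
and then substitute the explicit values of the four components of $\vec{p}_x$ listed in Proposition \ref{prop:absobption probability for epsilon=0}. Grouping the terms proportional to $\rho_{L,0},\rho_{R,0}$ (weighted by $\frac{1+\gamma}{1+2\gamma}$) and those proportional to $\rho_{L,1},\rho_{R,1}$ (weighted by $\frac{\gamma}{1+2\gamma}$) yields \eqref{eq:microscopic profile bottom layer e=0} verbatim. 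This is the only routine part of the proof.

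For the bulk formula $\theta_1(x)=\theta_0(x)$ with $x\in\{2,\ldots,N-1\}$, I would simply invoke the identity
\[
\hat p(\delta_{(x,1)},\delta_{(\beta,i)}) = \hat p(\delta_{(x,0)},\delta_{(\beta,i)}), \qquad (\beta,i)\in\{L,R\}\times I,
\]
from the same proposition, which gives $\vec{q}_x=\vec{p}_x$ and hence $\theta_1(x)=\vec{q}_x\cdot\vec{\rho}=\vec{p}_x\cdot\vec{\rho}=\theta_0(x)$.

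For the boundary expressions $\theta_1(1)$ and $\theta_1(N)$, rather than expanding the cumbersome rational expressions for $\hat p(\delta_{(1,1)},\cdot)$ and $\hat p(\delta_{(N,1)},\cdot)$, I would use the second and sixth equations of the linear system \eqref{eq:linear system for absorption probability} specialised to $\epsilon=0$, namely
\[
\vec{q}_1 = \tfrac{1}{1+\gamma}\,\vec{q}_0+\tfrac{\gamma}{1+\gamma}\,\vec{p}_1, \qquad
\vec{q}_N = \tfrac{1}{1+\gamma}\,\vec{q}_{N+1}+\tfrac{\gamma}{1+\gamma}\,\vec{p}_N.
\]
Taking the dot product of these two identities with $\vec{\rho}$ and using that $\vec{q}_0\cdot\vec{\rho}=\rho_{L,1}$ and $\vec{q}_{N+1}\cdot\vec{\rho}=\rho_{R,1}$ by the definition \eqref{eq:vector definition of rho}, together with $\vec{p}_1\cdot\vec{\rho}=\theta_0(1)$ and $\vec{p}_N\cdot\vec{\rho}=\theta_0(N)$ already established, one obtains \eqref{eq:microscopic profile top layer e=0}. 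The only potential pitfall here is to make sure the boundary entries of $\vec{q}_0$ and $\vec{q}_{N+1}$ are correctly identified with the slow-particle reservoir densities; apart from that, the proof amounts to bookkeeping and requires no further estimates.
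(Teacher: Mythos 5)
Your proof is correct and follows essentially the same route as the paper, which simply combines Corollary~\ref{corollary:dual representation of micro-profile} with Proposition~\ref{prop:absobption probability for epsilon=0}. Your shortcut for the boundary values $\theta_1(1)$ and $\theta_1(N)$ --- dotting the $\epsilon=0$ harmonicity relations $\vec{q}_1=\tfrac{1}{1+\gamma}\vec{q}_0+\tfrac{\gamma}{1+\gamma}\vec{p}_1$ and $\vec{q}_N=\tfrac{1}{1+\gamma}\vec{q}_{N+1}+\tfrac{\gamma}{1+\gamma}\vec{p}_N$ with $\vec{\rho}$ --- is a harmless rearrangement, since these are exactly the relations that yield the explicit expressions for $\hat p(\delta_{(1,1)},\cdot)$ and $\hat p(\delta_{(N,1)},\cdot)$ in that proposition.
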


\begin{proof}
The proof directly follows from Corollary~\ref{corollary:dual representation of micro-profile} and Proposition~\ref{prop:absobption probability for epsilon=0}.
\end{proof}


\paragraph{Case $\epsilon>0$.}

We next compute the absorption probabilities for the dual process and the stationary microscopic profile for the original process when $\epsilon>0$.

\begin{proposition}
{\bf[Absorption probability for $\epsilon>0$]} 
\label{prop:absobption probability for epsilon>0}
Consider the dual process 
$$
\{\xi(t)\colon\,t \geq 0\}, \qquad \xi(t) = \{\xi_0(x,t),\xi_{1}(x,t)\}_{x \in V},
$$
with generator $\hat{L}_{\epsilon,\gamma}$ (see \eqref{eq: dual generator IRW with sleeping particles + reservoirs}) with $\epsilon>0$. Let $\hat{p}(\cdot,\cdot)$ (see \eqref{eq: definition of absorption probability}) be the absorption probabilities of the dual process starting from a single particle, and let $(\vec{p}_x,\vec{q}_x)_{x\in V}$ be as defined in \eqref{eq:vector definition of p,q}. Then
\begin{equation}
\begin{aligned}
\vec{p}_x &= \vec{c}_1\, x + \vec{c}_2 + \epsilon(\vec{c}_3\,\alpha_1^x+\vec{c}_4\,\alpha_2^x),\\
\vec{q}_x &= \vec{c}_1\, x + \vec{c}_2 - (\vec{c}_3\,\alpha_1^x+\vec{c}_4\,\alpha_2^x),
\end{aligned}
\quad x\in V,
\end{equation}
where $\alpha_1,\alpha_2$ are the two roots of the equation
\begin{equation}
\label{eq:resc_root_def}
\epsilon\alpha^2-(\gamma(1+\epsilon)+2\epsilon)\,\alpha+\epsilon=0,
\end{equation} 
and $\vec{c}_1,\vec{c}_2,\vec{c}_3,\vec{c}_4$ are vectors that depend on the parameters $N,\epsilon,\alpha_1,\alpha_2$ (see \eqref{eq:definition of c1,c2,c3,c4} for explicit expressions). 
\end{proposition}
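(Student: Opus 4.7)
The plan is to solve the four-block linear system \eqref{eq:linear system for absorption probability} by diagonalising the bulk recursion and then fitting the four vectors of coefficients to the boundary equations at $x=1$ and $x=N$.

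First I would rewrite the interior equations (for $x\in\{2,\ldots,N-1\}$) in second-difference form:
\begin{equation*}
\vec{p}_{x+1}-2\vec{p}_x+\vec{p}_{x-1}=\gamma(\vec{p}_x-\vec{q}_x),\qquad
\epsilon(\vec{q}_{x+1}-2\vec{q}_x+\vec{q}_{x-1})=-\gamma(\vec{p}_x-\vec{q}_x).
\end{equation*}
Adding these gives that $\vec{u}_x:=\vec{p}_x+\epsilon\vec{q}_x$ is discrete-harmonic, hence affine in $x$, say $\vec{u}_x=(1+\epsilon)(\vec{c}_1 x+\vec{c}_2)$. Subtracting, the difference $\vec{v}_x:=\vec{p}_x-\vec{q}_x$ satisfies the three-term recursion
\begin{equation*}
\vec{v}_{x+1}-\Bigl(2+\gamma\tfrac{1+\epsilon}{\epsilon}\Bigr)\vec{v}_x+\vec{v}_{x-1}=0,
\end{equation*}
whose characteristic equation, after multiplying by $\epsilon$, is precisely \eqref{eq:resc_root_def}. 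Therefore $\vec{v}_x=(1+\epsilon)(\vec{c}_3\alpha_1^x+\vec{c}_4\alpha_2^x)$ with $\alpha_1,\alpha_2$ the two roots. Inverting $(\vec{u}_x,\vec{v}_x)\leftrightarrow(\vec{p}_x,\vec{q}_x)$ yields the claimed ansatz. Since the recursion is second-order in $\vec{v}$ and the affine part has two free parameters, the four vector coefficients $\vec{c}_1,\vec{c}_2,\vec{c}_3,\vec{c}_4$ form the complete family of bulk solutions.

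Next I would determine $\vec{c}_1,\ldots,\vec{c}_4$ by plugging the ansatz into the four remaining equations of \eqref{eq:linear system for absorption probability}, namely the updated equations at $x=1$ and $x=N$ (each a vector equation in $\mathbb{R}^4$) with the prescribed absorbing values $\vec{p}_0,\vec{q}_0,\vec{p}_{N+1},\vec{q}_{N+1}$. After simplification using the characteristic identity $\epsilon(\alpha_i+\alpha_i^{-1})=2\epsilon+\gamma(1+\epsilon)$, these four vector equations reduce to a $4\times 4$ linear system for the vectors $(\vec{c}_1,\vec{c}_2,\vec{c}_3,\vec{c}_4)$ whose right-hand sides are the four reservoir basis vectors $\vec{p}_0,\vec{q}_0,\vec{p}_{N+1},\vec{q}_{N+1}$; the scalar $4\times 4$ matrix couples them through the quantities $\alpha_1,\alpha_2,\alpha_1^N,\alpha_2^N$ and $N$. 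Solving this system component by component then gives the explicit formula \eqref{eq:definition of c1,c2,c3,c4} for the coefficients.

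The main obstacle is not the derivation of the ansatz, which is essentially a linear-algebra exercise, but the algebraic inversion of the boundary-layer matrix: one has to obtain a closed-form inverse that is manageable enough to extract the asymptotics of Section~\ref{s.bdres} (the uphill-diffusion regime and the $\sqrt{\epsilon}\log(1/\epsilon)$ boundary layer). This is presumably the content deferred to Appendix~\ref{s.app*}, and I would at this stage simply quote that inverse and verify consistency by checking the limit $\epsilon\downarrow 0$ against Proposition~\ref{prop:absobption probability for epsilon=0}, where the roots $\alpha_1,\alpha_2$ degenerate (one root blows up, the other tends to $0$) in a way that collapses the exponential part and recovers the linear profile there.
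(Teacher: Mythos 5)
Your proposal is correct and follows essentially the same route as the paper: the transformation $(\vec{p}_x,\vec{q}_x)\mapsto(\vec{p}_x+\epsilon\vec{q}_x,\;\vec{p}_x-\vec{q}_x)$ is exactly the decoupling \eqref{eq:decouple tranformation} used there, leading to the same affine-plus-exponential ansatz with characteristic equation \eqref{eq:resc_root_def}, and the same reduction of the four boundary equations to the $4\times4$ system with matrix $M_\epsilon$ whose inverse is quoted from Appendix~\ref{s.app*}. The only addition is your consistency check against the $\epsilon=0$ case, which the paper does not carry out but which is a reasonable sanity check.
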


\begin{proof}
Applying the transformation
\begin{equation}
\label{eq:decouple tranformation}
\vec{\tau}_x := \vec{p}_x+\epsilon \vec{q}_x,\qquad \vec{s}_x := \vec{p}_x-\vec{q}_x,
\end{equation}
we see that the system in \eqref{eq:linear system for absorption probability} decouples in the bulk (i.e., the interior of $V$), and
\begin{equation}
\label{eq:recursive equation}
\vec{\tau}_x = \frac{1}{2}(\vec{\tau}_{x+1}+\vec{\tau}_{x-1}),\qquad
\vec{s}_x = \frac{\epsilon}{\gamma(1+\epsilon)+2\epsilon}(\vec{s}_{x+1}+\vec{s}_{x-1}),
\qquad x\in \{2,\ldots,N-1\}.
\end{equation}
The solution of the above system of recursion equations takes the form
\begin{equation}
\label{eq:decoupled solution}
\vec{\tau}_x = \vec{A}_1 x + \vec{A}_2,\quad\vec{s}_x = \vec{A}_3 \alpha_1^x + \vec{A}_4 \alpha_2^x,
\end{equation}
where $\alpha_1,\alpha_2$ are the two roots of the equation
\begin{equation}
\epsilon\alpha^2-(\gamma(1+\epsilon)+2\epsilon)\,\alpha+\epsilon=0.
\end{equation} 
Rewriting the four boundary conditions in \eqref{eq:linear system for absorption probability} in terms of the new transformations, we get 
\begin{equation}
\big[
\begin{array}{c c c c}
\vec{A}_1 & \vec{A}_2 & \vec{A}_3 & \vec{A}_4
\end{array}
\big] = (1+\epsilon)(M_\epsilon^{-1})^T,
\end{equation}
where $M_\epsilon$ is given by
\begin{equation}
\label{eq:matrix_def}
M_\epsilon:=
\left[\begin{array}{c c c c}
0 & 1 & \epsilon & \epsilon\\
1-\epsilon & 1 & (\epsilon-1)\alpha_1-\epsilon & (\epsilon-1)\alpha_2-\epsilon\\
N+1 & 1 & \epsilon \alpha_1^{N+1} &  \epsilon \alpha_2^{N+1}\\
N+\epsilon & 1 & -\alpha_1^N(\epsilon\alpha_1+(1-\epsilon)) & -\alpha_2^N(\epsilon \alpha_2+(1-\epsilon))
\end{array}
\right].
\end{equation}
Since $\vec{p}_x = \frac{1}{1+\epsilon}(\vec{\tau}_x+\epsilon\vec{s}_x)$ and $\vec{q}_x = \frac{1}{1+\epsilon}(\vec{\tau}_x-\vec{s}_x)$, by setting
$$
\vec{c}_i = \frac{1}{1+\epsilon}\vec{A}_i,\quad i\in\{1,2,3,4\},
$$
we get the desired identities.
\end{proof}

Without loss of generality, from here onwards, we fix the choices of the roots $\alpha_1$ and $\alpha_2$ of the quadratic equation in \eqref{eq:resc_root_def} as
\begin{equation}
\label{eq:definition of two roots}
\alpha_1=1+\frac{\gamma}{2}\left(1+\frac{1}{\epsilon}\right)-\sqrt{\left[1+\frac{\gamma}{2}
\left(1+\frac{1}{\epsilon}\right)\right]^2-1},\quad \alpha_2
=1+\frac{\gamma}{2}\left(1+\frac{1}{\epsilon}\right)+\sqrt{\left[1+\frac{\gamma}{2}
\left(1+\frac{1}{\epsilon}\right)\right]^2-1}.
\end{equation}
Note that, for any $\epsilon, \gamma >0$, we have 
\begin{equation}
\label{eq:product of two roots}
\alpha_1\alpha_2 = 1.
\end{equation}
As a corollary, we get the expression for the stationary microscopic profile of the original process.

\begin{theorem}{\bf[Stationary microscopic profile for $\epsilon>0$]}
\label{corollary:microscopic profile for epsilon>0}
$\mbox{}$\\
The stationary microscopic profile $\{\theta_0(x),\theta_1(x)\}_{x\in V}$ (see \eqref{eq:definition of microscopic profile}) for the process $\{\eta(t)\colon\,t \geq 0\}$ and $\eta(t) = \{\eta_0(x,t),\eta_{1}(x,t)\}_{x \in V}$ with generator $L_{\epsilon,\gamma,N}$ (see \eqref{eq: generator IRW with sleeping particles + reservoirs}) with $\epsilon>0$ is given by
\begin{equation}
	\label{eq:microscopic profile explicit}
\begin{aligned}
\theta_0(x) 
&= (\vec{c}_1\,.\,\vec{\rho}) x +(\vec{c}_2\,.\,\vec{\rho})+\epsilon(\vec{c}_3\,.\,
\vec{\rho})\alpha_1^x+\epsilon(\vec{c}_4\,.\,\vec{\rho})\alpha_2^x,\\
\theta_1(x)
&=(\vec{c}_1\,.\,\vec{\rho}) x +(\vec{c}_2\,.\,\vec{\rho})-(\vec{c}_3\,.\,
\vec{\rho})\alpha_1^x-(\vec{c}_4\,.\,\vec{\rho})\alpha_2^x,
\end{aligned}
\quad x\in V,
\end{equation}
where $(\vec{c}_i)_{1\leq i \leq 4}$ are as in \eqref{eq:definition of c1,c2,c3,c4}, and
$$
\vec{\rho}
:=\big[
\begin{array}{c c c c}
\rho_{(L,0)} & \rho_{(L,1)} & \rho_{(R,0)} & \rho_{(R,1)}
\end{array}
\big]^T.
$$
\end{theorem}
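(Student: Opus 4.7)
The plan is to observe that this theorem is essentially an immediate consequence of two results already in hand: Corollary~\ref{corollary:dual representation of micro-profile}, which expresses the stationary profile via absorption probabilities of the dual process, and Proposition~\ref{prop:absobption probability for epsilon>0}, which gives the closed-form expression for those absorption probabilities when $\epsilon > 0$.

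Concretely, I would proceed as follows. First, invoke Corollary~\ref{corollary:dual representation of micro-profile} to write
\begin{equation*}
\theta_0(x) = \vec{p}_x \cdot \vec{\rho}, \qquad \theta_1(x) = \vec{q}_x \cdot \vec{\rho}, \qquad x \in V,
\end{equation*}
where $\vec{p}_x, \vec{q}_x$ are the absorption-probability vectors defined in \eqref{eq:vector definition of p,q} and $\vec{\rho}$ is the boundary-density vector in \eqref{eq:vector definition of rho}. Second, substitute the explicit expressions for $\vec{p}_x$ and $\vec{q}_x$ supplied by Proposition~\ref{prop:absobption probability for epsilon>0}, namely
\begin{equation*}
\vec{p}_x = \vec{c}_1\, x + \vec{c}_2 + \epsilon(\vec{c}_3\,\alpha_1^x + \vec{c}_4\,\alpha_2^x),
\qquad
\vec{q}_x = \vec{c}_1\, x + \vec{c}_2 - (\vec{c}_3\,\alpha_1^x + \vec{c}_4\,\alpha_2^x).
\end{equation*}
Taking the inner product of each of these with $\vec{\rho}$ and distributing componentwise gives exactly the formulas \eqref{eq:microscopic profile explicit} claimed in the statement.

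Since the argument is a direct substitution, there is no serious obstacle: the work has already been done, once in establishing duality to the absorbing dual (Proposition~\ref{proposition: duality absorbing sites}), once in extracting the profile from the absorption probabilities (Corollary~\ref{corollary:dual representation of micro-profile}), and once in solving the linear recursion \eqref{eq:linear system for absorption probability} via the decoupling transformation \eqref{eq:decouple tranformation} (Proposition~\ref{prop:absobption probability for epsilon>0}). The only point that deserves a brief mention in the write-up is that the linearity $\xi \mapsto \E_\eta[D(\xi,\eta_t)]$ in the boundary-duality factors $d_{(L,i)}, d_{(R,i)}$ is what allows the representation of $\theta_i(x)$ to be linear in the four reservoir parameters, thereby justifying the dot-product structure $\vec{p}_x \cdot \vec{\rho}$ and $\vec{q}_x \cdot \vec{\rho}$.

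If anything could be called the ``hard part'', it would be the bookkeeping behind Proposition~\ref{prop:absobption probability for epsilon>0}, i.e.\ verifying that the matrix $M_\epsilon$ in \eqref{eq:matrix_def} is invertible for every $\epsilon, \gamma > 0$ and $N \geq 2$, so that the coefficient vectors $\vec{c}_1, \vec{c}_2, \vec{c}_3, \vec{c}_4$ are well-defined. This is precisely the content of Appendix~\ref{s.app*}, and I would cite it here rather than redo the computation. With that reference in place, the proof of the present theorem reduces to a one-line substitution.
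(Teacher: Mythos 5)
Your proposal is correct and follows exactly the paper's own route: the paper proves this theorem by citing Corollary~\ref{corollary:dual representation of micro-profile} together with Proposition~\ref{prop:absobption probability for epsilon>0} and substituting the explicit forms of $\vec{p}_x$ and $\vec{q}_x$ into $\theta_0(x)=\vec{p}_x\cdot\vec{\rho}$, $\theta_1(x)=\vec{q}_x\cdot\vec{\rho}$. Your additional remarks on the linearity in the reservoir parameters and on the invertibility of $M_\epsilon$ (handled in Appendix~\ref{s.app*}) are consistent with, and slightly more explicit than, the paper's one-line argument.
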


\begin{proof}
The proof follows directly from Corollary~\ref{corollary:dual representation of micro-profile} and Proposition~\ref{prop:absobption probability for epsilon>0}.
\end{proof}

\begin{remark}{\bf [Symmetric layers]} 
{\rm For $\epsilon=1,$ the inverse of the matrix $M_\epsilon$ in the proof of Proposition~\ref{prop:absobption probability for epsilon>0} takes a simpler form. This is because for $\epsilon=1$ the system is fully symmetric. In this case, the explicit expression of the stationary microscopic profile is given by
\begin{equation}
\begin{aligned}
\theta_0(x)=& \frac{1}{2}\left(\frac{N+1-x}{N+1}+ \frac{\alpha_2^{N+1-x}-\alpha_1^{N+1-x}}{\alpha_2^{N+1}
-\alpha_1^{N+1}}\right)\rho_{L,0}+\frac{1}{2}\left(\frac{x}{N+1}+ \frac{\alpha_2^x-\alpha_1^x}{\alpha_2^{N+1}
-\alpha_1^{N+1}} \right)\rho_{R,0}\\
&+\frac{1}{2}\left(\frac{N+1-x}{N+1}- \frac{\alpha_2^{N+1-x}-\alpha_1^{N+1-x}}{\alpha_2^{N+1}
\alpha_1^{N+1}} \right)\rho_{(L,1)}
+\frac{1}{2}\left(\frac{x}{N+1}- \frac{\alpha_2^x-\alpha_1^x}{\alpha_2^{N+1}-\alpha_1^{N+1}}  \right)\rho_{(R,1)}
\end{aligned}
\end{equation}
and 
\begin{equation}
\begin{aligned}
\theta_1(x)=& \frac{1}{2}\left(\frac{N+1-x}{N+1}- \frac{\alpha_2^{N+1-x}-\alpha_1^{N+1-x}}{\alpha_2^{N+1}
-\alpha_1^{N+1}} \right)\rho_{L,0}+\frac{1}{2}\left(\frac{x}{N+1}- \frac{\alpha_2^x-\alpha_1^x}{\alpha_2^{N+1}
-\alpha_1^{N+1}} \right)\rho_{R,0}\\
&+\frac{1}{2}\left(\frac{N+1-x}{N+1}+ \frac{\alpha_2^{N+1-x}-\alpha_1^{N+1-x}}{\alpha_2^{N+1}
-\alpha_1^{N+1}} \right)\rho_{(L,1)} +\frac{1}{2}\left(\frac{x}{N+1}+ \frac{\alpha_2^x-\alpha_1^x}{\alpha_2^{N+1}
-\alpha_1^{N+1}} \right)\rho_{(R,1)}.
\end{aligned}
\end{equation}
However, note that
$$
\theta_0(x) + \theta_1(x) = 2[(\vec{c}_1.\vec{\rho}) x+(\vec{c}_2.\vec{\rho})]-(1-\epsilon)[(\vec{c}_3\,.\,
\vec{\rho})\alpha_1^x-(\vec{c}_4\,.\,\vec{\rho})\alpha_2^x],
$$
which is linear in $x$ only when $\epsilon=1$, and
$$
\theta_0(x) - \theta_1(x) = (1+\epsilon)[(\vec{c}_3\,.\,\vec{\rho})\alpha_1^x+(\vec{c}_4\,.\,\vec{\rho})\alpha_2^x],
$$
which is purely exponential in $x$.}\hfill $\spadesuit$
\end{remark}


\subsubsection{Stationary macroscopic profile and boundary-value problem}
In this section we rescale the finite-volume system with boundary reservoirs, in the same way as was done for the infinite-volume system in Section~\ref{s.hydro} when we derived the hydrodynamic limit (i.e., space is scaled by $1/N$ and the switching rate $\gamma_N$  is scaled such that $\gamma_N N^2\to\Upsilon>0$), and study the validity of Fick's law at stationarity on macroscopic scale. Before we do that, we justify below that the current scaling of the parameters is indeed the proper choice, in the sense that we obtain non-trivial pointwise limits (macroscopic stationary profiles) of the microscopic stationary profiles found in previous sections, and that the resulting limits (when $\epsilon > 0$) satisfy the stationary boundary-value problem given in \eqref{eq:HDL IRW with slow particles} with boundary conditions $ \rho^{\text{stat}}_0(0) = \rho_{L,0}, \ 	\rho^{\text{stat}}_0(1) = \rho_{R,0},\ \rho^{\text{stat}}_1(0) = \rho_{L,1}$ and  	$\rho^{\text{stat}}_1(1) = \rho_{R,1}$.

We say that \textit{the macroscopic stationary profiles} are given by functions $\rho^{\text{stat}}_i: (0,1)\to\R$ for $i\in I$ if, for any $y\in(0,1)$,
\begin{equation}\label{eq: definition macro profile}
\lim_{N\to \infty}\theta_0^{(N)}(\lceil yN\rceil)=\rho^{\text{stat}}_0(y),\quad \lim_{N\to \infty} \theta_1^{(N)}(\lceil yN\rceil)=\rho^{\text{stat}}_1(y).
\end{equation}

\begin{theorem}{\bf[Stationary macroscopic profile]}
Let $(\theta_0^{(N)}(x),\theta_1^{(N)}(x))_{x\in V}$ be the stationary microscopic profile (see \eqref{eq:definition of microscopic profile}) for the process $\{\eta(t)\colon\,t \geq 0\}, \ \eta(t) = \{\eta_0(x,t),\eta_{1}(x,t)\}_{x \in V}$ with generator $L_{\epsilon,\gamma_N,N}$ (see \eqref{eq: generator IRW with sleeping particles + reservoirs}), where $\gamma_N$ is such that $\gamma_NN^2\to\Upsilon$ as $N\to\infty$ for some $\Upsilon>0$. Then, for each $y\in(0,1)$, the pointwise limits (see Fig.~\ref{fig:density})
\begin{equation}
\label{eq:definition of macroscopic profile}
\rhosz(y):=\lim_{N\to \infty} \theta_0^{(N)}(\lceil yN\rceil),\quad \rhoso(y):=\lim_{N\to \infty} \theta_1^{(N)}(\lceil yN\rceil),
\end{equation}
exist and are given by
\begin{equation}
\label{eq:macroscopic profile e=0}
\begin{aligned}
\rhosz(y) &= \rho_{L,0}+(\rho_{R,0}-\rho_{L,0})y, && y\in(0,1),\\
\rhoso(y) &= \rhosz(y), && y\in(0,1),
\end{aligned}
\end{equation}
when $\epsilon=0$, while
\begin{equation}
\label{eq:macroscopic profile bottom layer e>0}
\begin{aligned}
\rhosz(y)&= \frac{\epsilon}{1+\epsilon}\left[\frac{\sinh \left[B_{\epsilon,\Upsilon}\,(1-y)\right]}
{\sinh \left[B_{\epsilon,\Upsilon}\right]}\,(\rho_{(L,0)}-\rho_{(L,1)})
+\frac{\sinh \left[B_{\epsilon,\Upsilon}\,y\right]}
{\sinh \left[B_{\epsilon,\Upsilon}\right]}\,(\rho_{(R,0)}-\rho_{(R,1)})\right]\\
&+ \frac{1}{1+\epsilon}\left[\rho_{(R,0)}\,y+\rho_{(L,0)} \,(1-y)\right]+\frac{\epsilon}{1+\epsilon}
\left[\rho_{(R,1)}\,y+\rho_{(L,1)} \,(1-y)\right],
\end{aligned}
\end{equation}
\begin{equation}
\label{eq:macroscopic profile top layer e>0}
\begin{aligned}
\rhoso(y)
&= \frac{1}{1+\epsilon}\left[\frac{\sinh \left[B_{\epsilon,\Upsilon}\,(1-y)\right]}
{\sinh \left[B_{\epsilon,\Upsilon}\right]}\,(\rho_{(L,1)}-\rho_{(L,0)})
+\frac{\sinh \left[B_{\epsilon,\Upsilon}\,y\right]}
{\sinh \left[B_{\epsilon,\Upsilon}\right]}\,(\rho_{(R,1)}-\rho_{(R,0)})\right]\\
&+ \frac{1}{1+\epsilon}\left[\rho_{(R,0)}\,y+\rho_{(L,0)} \,(1-y)\right]
+\frac{\epsilon}{1+\epsilon}\left[\rho_{(R,1)}\,y+\rho_{(L,1)} \,(1-y)\right],
\end{aligned}
\end{equation}
when $\epsilon>0$, where $B_{\epsilon,\Upsilon}:=\sqrt{\Upsilon(1+\tfrac{1}{\epsilon})}$. Moreover, when $\epsilon>0$, the two limits in \eqref{eq:definition of macroscopic profile} are uniform in $(0,1)$.
\end{theorem}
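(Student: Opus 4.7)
My plan is to start from the closed-form microscopic profiles of Theorems~\ref{corollary:microscopic profile for epsilon=0} (for $\epsilon=0$) and~\ref{corollary:microscopic profile for epsilon>0} (for $\epsilon>0$), substitute $\gamma=\gamma_N$ with $\gamma_N N^2\to\Upsilon$, evaluate at $x=\lceil yN\rceil$, and pass to the limit $N\to\infty$. Since both formulas are fully explicit, the proof reduces to asymptotic analysis; uniformity in $y\in(0,1)$ in the $\epsilon>0$ regime will then follow from the fact that every $N$-dependent factor is smooth in $x/N$ with error terms of size $O(N^{-1})$ uniform on compact subsets of $(0,1)$.

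The case $\epsilon=0$ is immediate. Since $\gamma_N\to 0$, the prefactor $\tfrac{1+\gamma_N}{1+2\gamma_N}$ tends to $1$ and $\tfrac{\gamma_N}{1+2\gamma_N}$ tends to $0$, while the two weights inside the brackets in \eqref{eq:microscopic profile bottom layer e=0} converge respectively to $1-y$ and $y$, producing the linear profile \eqref{eq:macroscopic profile e=0} for $\rhosz$. The identity $\rhoso=\rhosz$ then comes from the bulk equality $\theta_1^{(N)}(x)=\theta_0^{(N)}(x)$ for $x\in\{2,\ldots,N-1\}$ stated in \eqref{eq:microscopic profile top layer e=0}, the boundary corrections at $x=1,N$ being invisible for any $y\in(0,1)$.

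For $\epsilon>0$ I would first analyse the roots $\alpha_1,\alpha_2$ of \eqref{eq:resc_root_def}. With $\gamma_N=\Upsilon/N^2$, a Taylor expansion of \eqref{eq:definition of two roots} gives
\begin{equation*}
\alpha_2 \;=\; 1+\tfrac{B_{\epsilon,\Upsilon}}{N}+O(N^{-2}),\qquad \alpha_1 \;=\; 1-\tfrac{B_{\epsilon,\Upsilon}}{N}+O(N^{-2}),
\end{equation*}
so that $\alpha_2^{\lceil yN\rceil}\to \ee^{B_{\epsilon,\Upsilon}\,y}$ and $\alpha_1^{\lceil yN\rceil}\to \ee^{-B_{\epsilon,\Upsilon}\,y}$ uniformly on compact subsets of $(0,1)$. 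I would then track the asymptotics of the vectors $\vec{c}_1,\vec{c}_2,\vec{c}_3,\vec{c}_4$ via the explicit form of $M_\epsilon^{-1}$ provided in Appendix~\ref{s.app*}. The dominant contributions come from $\alpha_1^{N+1},\alpha_2^{N+1}\to \ee^{\mp B_{\epsilon,\Upsilon}}$; after assembling the pieces the denominators collapse into $\sinh(B_{\epsilon,\Upsilon})$, the exponentials in $x$ recombine into $\sinh(B_{\epsilon,\Upsilon}\,y)$ and $\sinh(B_{\epsilon,\Upsilon}\,(1-y))$, and the linear piece in $x$ becomes linear in $y$; inserting into \eqref{eq:microscopic profile explicit} produces \eqref{eq:macroscopic profile bottom layer e>0}--\eqref{eq:macroscopic profile top layer e>0}.

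The hard part will be this book-keeping for $\vec{c}_1,\ldots,\vec{c}_4$ through $M_\epsilon^{-1}$, since several $O(1)$ contributions from different rows must cancel to leave only the expected $\sinh$-structure in the limit. As a cleaner alternative (and a useful sanity check), I would verify directly that the proposed pair \eqref{eq:macroscopic profile bottom layer e>0}--\eqref{eq:macroscopic profile top layer e>0} solves the stationary version of \eqref{eq:HDL IRW with slow particles} on $(0,1)$ with boundary conditions $\rhosz(0)=\rho_{L,0}$, $\rhosz(1)=\rho_{R,0}$, $\rhoso(0)=\rho_{L,1}$, $\rhoso(1)=\rho_{R,1}$. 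This is transparent after decoupling by setting $S=\rhosz+\epsilon\rhoso$, which must be harmonic and hence linear, and $D=\rhosz-\rhoso$, which satisfies $\Delta D=B_{\epsilon,\Upsilon}^2\, D$ and is therefore a combination of $\sinh(B_{\epsilon,\Upsilon}\,y)$ and $\sinh(B_{\epsilon,\Upsilon}\,(1-y))$. Uniqueness of this linear elliptic BVP then pins down the limit once convergence is established, which one can also obtain directly from the discrete harmonic equations \eqref{eq:linear system for absorption probability} satisfied by $(\theta_0^{(N)},\theta_1^{(N)})$.
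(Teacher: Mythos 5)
Your proposal follows essentially the same route as the paper's proof: for $\epsilon=0$ one reads off the limit directly from \eqref{eq:microscopic profile bottom layer e=0}--\eqref{eq:microscopic profile top layer e=0}, and for $\epsilon>0$ one expands $\alpha_1,\alpha_2$ to get $\alpha_{1}^{N},\alpha_2^{N}\to \ee^{\mp B_{\epsilon,\Upsilon}}$, computes the limits of $N\vec{c}_1,\vec{c}_2,\vec{c}_3,\vec{c}_4$ from the explicit expressions in Appendix~\ref{s.app*}, and inserts them into \eqref{eq:microscopic profile explicit}. The PDE verification you add at the end is a reasonable sanity check but is not needed, and is not part of the paper's argument.
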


\begin{proof}
For $\epsilon=0$, it easily follows from \eqref{eq:microscopic profile bottom layer e=0} plus the fact that $\gamma_N N^2\to \Upsilon>0$ and $\tfrac{\lceil yN\rceil}{N}\to y$ uniformly in $(0,1)$ as $N\to\infty$, that
$$
\lim_{N\to\infty}\sup_{y\in(0,1)} \left| \theta_0^{(N)}(\lceil yN\rceil)-[\rho_{(L,0)}+(\rho_{(R,0)}-\rho_{(L,0)})\,y]\right|=0,
$$
and since $\theta_1(x) = \theta_0(x)$ for all $x\in\{2,\ldots,N-1\}$,
for fixed $y\in(0,1)$, we have 
$$
\lim_{N\to \infty} \theta_1^{(N)}(\lceil yN\rceil) = \rhosz(y).
$$
When $\epsilon>0,$ since $\gamma_N N^2\to\Upsilon>0$ as $N\to\infty,$ we note the following:
\begin{equation}
\begin{aligned}
&\gamma_N\overset{N\to\infty}{\longrightarrow} 0,\\
&\lim\limits_{N\to\infty}\alpha_1=\lim\limits_{N\to\infty}\alpha_2=1,\\
&\lim\limits_{N\to\infty}\alpha_1^N = \ee^{-B_{\epsilon,\Upsilon}},\quad \lim\limits_{N\to\infty}\alpha_2^N = \ee^{B_{\epsilon,\Upsilon}}.
\end{aligned}
\end{equation}
Consequently, from the expressions of $(\vec{c}_i)_{1\leq i \leq 4}$ defined in \eqref{eq:definition of c1,c2,c3,c4}, we also have
\begin{equation}
	\begin{aligned}
		\lim\limits_{N\to\infty}N\vec{c}_1 &= \frac{1}{1+\epsilon}\big[
		\begin{array}{c c c c}
			-1 & -\epsilon & 1 & \epsilon
		\end{array}
		\big]^T,\ \lim\limits_{N\to\infty}\vec{c}_2 = \frac{1}{1+\epsilon}\big[
		\begin{array}{c c c c}
			1 & \epsilon & 0 & 0
		\end{array}
		\big]^T,\\
		\lim\limits_{N\to\infty}\vec{c}_3 &= \frac{1}{1+\epsilon}\big[
		\begin{array}{c c c c}
			\frac{\ee^{B_{\epsilon,\Upsilon}}}{\ee^{B_{\epsilon,\Upsilon}}-\ee^{-B_{\epsilon,\Upsilon}}} & -\frac{\ee^{B_{\epsilon,\Upsilon}}}{\ee^{B_{\epsilon,\Upsilon}}-\ee^{-B_{\epsilon,\Upsilon}}} & -\frac{1}{\ee^{B_{\epsilon,\Upsilon}}-\ee^{-B_{\epsilon,\Upsilon}}} & \frac{1}{\ee^{B_{\epsilon,\Upsilon}}-\ee^{-B_{\epsilon,\Upsilon}}}
		\end{array}
		\big]^T,\\
		\lim\limits_{N\to\infty}\vec{c}_4 &= \frac{1}{1+\epsilon}\big[
		\begin{array}{c c c c}
			-\frac{\ee^{-B_{\epsilon,\Upsilon}}}{\ee^{B_{\epsilon,\Upsilon}}-\ee^{-B_{\epsilon,\Upsilon}}} & \frac{\ee^{-B_{\epsilon,\Upsilon}}}{\ee^{B_{\epsilon,\Upsilon}}-\ee^{-B_{\epsilon,\Upsilon}}} & \frac{1}{\ee^{B_{\epsilon,\Upsilon}}-\ee^{-B_{\epsilon,\Upsilon}}} & -\frac{1}{\ee^{B_{\epsilon,\Upsilon}}-\ee^{-B_{\epsilon,\Upsilon}}}
		\end{array}
		\big]^T.
	\end{aligned}
\end{equation}
Combining the above equations with \eqref{eq:microscopic profile explicit}, and the fact that $\tfrac{\lceil yN\rceil}{N}\to y$ uniformly in $(0,1)$ as $N\to\infty$, we get the desired result.
\end{proof}
\begin{remark}{\bf [Non-uniform convergence]}
	\label{rem:uniform convergence of macroscopic profile}
	\rm{Note that for $\epsilon>0$ both stationary macroscopic profiles, when extended continuously to the closed interval $[0,1]$, match the prescribed boundary conditions. This is different from what happens  for $\epsilon=0$, where the continuous extension of $\rhoso$ to the closed interval $[0,1]$ equals  $\rhosz(y)= \rho_{L,0}+(\rho_{R,0}-\rho_{L,0})y$, which does not necessarily match the prescribed boundary conditions unless $\rho_{(L,1)}=\rho_{(L,0)}$ and $ \rho_{(R,1)}=\rho_{(R,0)}$. Moreover, as can be seen from the proof above, for $\epsilon>0,$ the convergence of $\theta_i$ to $\rho_i$ is uniform in $[0,1]$, i.e., 
$$\lim_{N\to\infty}\sup_{y\in[0,1]} \left|\, \rhosz(y)-\theta_0^{(N)}(\lceil yN\rceil)\,\right|=0,\quad \lim_{N\to\infty}\sup_{y\in[0,1]} \left|\, \rhoso(y)-\theta_1^{(N)}(\lceil yN\rceil)\,\right|=0,$$ while for $\epsilon=0$, the convergence of $\theta_1$ to $\rho_1$ is not uniform in $[0,1]$  when either $\rho_{(L,0)}\neq\rho_{(L,1)}$ or $\rho_{(R,0)}\neq\rho_{(R,1)}$. 
	
	Also, if $\rho_i^{\text{stat}, \epsilon}(\cdot)$ denotes the macroscopic profile defined in \eqref{eq:macroscopic profile bottom layer e>0}$-$\eqref{eq:macroscopic profile top layer e>0}, then for $\epsilon>0$ and $i\in\{0,1\}$, we have
\begin{equation}
	\label{eq:macroscopic continuity in epsilon}
	\lim\limits_{\epsilon\to 0}\rho_i^{\text{stat}, \epsilon}(y)\to\rho_i^{\text{stat}, 0}(y)
\end{equation}
for fixed $y\in(0,1)$ and $i\in\{0,1\}$, where $\rho_i^{\text{stat}, 0}(\cdot)$ is the corresponding macroscopic profile in \eqref{eq:macroscopic profile e=0} for $\epsilon=0$. However, this convergence is also not uniform for $i=1$ when $\rho_{(L,0)}\neq\rho_{(L,1)}$ or $\rho_{(R,0)}\neq\rho_{(R,1)}$.}
\hfill$\spadesuit$
\end{remark}

In view of the considerations in Remark~\ref{rem:uniform convergence of macroscopic profile}, we next concentrate on the case $\epsilon >0.$ The following result tells us that for $\epsilon>0$ the stationary macroscopic profiles satisfy a stationary PDE with fixed boundary conditions and also admit a stochastic representation in terms of an absorbing switching diffusion process.

\begin{theorem}{\bf[Stationary boundary value problem]}
\label{theorem:stationary boundary value problem}
Consider the boundary value problem 
\begin{equation}
\label{eq:stationary reaction diffusion with boundaries}
\begin{cases}
0 = \Delta u_0 + \Upsilon(u_1-u_0),\\
0 = \epsilon \Delta u_1 + \Upsilon(u_0-u_1),
\end{cases}
\end{equation}
with boundary conditions
\begin{equation}
\label{eq:stationary boundary condition}
\begin{cases}
u_0(0) = \rho_{L,0}, \ 	u_0(1) = \rho_{R,0},\\
u_1(0) = \rho_{L,1}, \ 	u_1(1) = \rho_{R,1},
\end{cases}
\end{equation}
where $\epsilon,\Upsilon>0$, and the four boundary parameters $\rho_{(L,0)},\, \rho_{(L,1)},\,\rho_{(R,0)},\,\rho_{(R,1)}$ are also positive. Then the PDE admits a unique strong solution given by
\begin{equation}
u_i(y) = \rhosi(y),\qquad y\in[0,1],
\end{equation}
where $(\rhosz(\cdot),\rhoso(\cdot))$ are as defined in \eqref{eq:definition of macroscopic profile}. Furthermore, $(\rhosz(\cdot),\rhoso(\cdot))$ has the stochastic representation
\begin{equation}
\label{eq:stochastic representation of stationary macro profile}
\rhosi(y) = \E_{(y,i)}[\Phi_{i_\tau}(X_\tau)],
\end{equation}
where $\{i_t\colon\,t \geq 0\}$ is the pure jump process on state space $I=\{0,1\}$ that switches at rate $\Upsilon$, the functions $\Phi_0,\Phi_1\colon\,I\to\R_+$ are defined as 
$$
\Phi_0 = \rho_{(L,0)}\,\boldsymbol1_{\{0\}} + \rho_{(R,0)}\,\boldsymbol1_{\{1\}},\qquad \Phi_1 
= \rho_{(L,1)}\,\boldsymbol1_{\{0\}} + \rho_{(R,1)}\,\boldsymbol1_{\{1\}},
$$
$\{X_t\colon\,t \geq 0\}$ is the stochastic process $[0,1]$ that satisfies the SDE
$$
\dd X_t = \psi(i_t)\,\dd W_t
$$
with $W_t=B_{2t}$ and $\{B_t\colon\, t \geq 0\}$ standard Brownian motion, the switching diffusion process $\{(X_t,i_t)\colon\,t\geq 0\}$ is killed at the stopping time 
$$
\tau := \inf\{t\geq 0\,\colon\,X_t\in I\},
$$
and $\psi\colon\,I\to \{1,\epsilon\}$ is given by $\psi := \boldsymbol1_{\{0\}} + \epsilon\,\boldsymbol1_{\{1\}}$.
\end{theorem}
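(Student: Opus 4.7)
The plan is to prove the theorem in three steps: (i) uniqueness of strong solutions via an energy estimate; (ii) verification that the explicit profiles $(\rho_0^{\text{stat}},\rho_1^{\text{stat}})$ from the preceding theorem solve \eqref{eq:stationary reaction diffusion with boundaries}--\eqref{eq:stationary boundary condition}; and (iii) showing that the stochastic expression $u_i(y) := \E_{(y,i)}[\Phi_{i_\tau}(X_\tau)]$ is also a solution, so that uniqueness forces $u_i \equiv \rho_i^{\text{stat}}$.

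For (i), let $(v_0, v_1)$ be a $C^2$ solution of the corresponding \emph{homogeneous} system (zero boundary data). Multiply the first equation in \eqref{eq:stationary reaction diffusion with boundaries} by $v_0$ and the second by $v_1$, integrate over $(0,1)$, and integrate by parts using the zero boundary conditions; adding the two identities yields
\begin{equation*}
\int_0^1 (v_0')^2\,\dd y + \epsilon \int_0^1 (v_1')^2\,\dd y + \Upsilon \int_0^1 (v_0-v_1)^2\,\dd y = 0.
\end{equation*}
Since $\epsilon,\Upsilon>0$, every term vanishes, which forces $v_0$ and $v_1$ to be equal constants; combined with the zero boundary values, $v_0 \equiv v_1 \equiv 0$.

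For (ii), the change of variables $s := \rho_0 - \rho_1$ and $t := \rho_0 + \epsilon\rho_1$ decouples the ODE system: adding the two equations gives $t'' = 0$ (so $t$ is affine), whereas subtracting and rearranging gives $s'' = \Upsilon(1+\tfrac{1}{\epsilon})s = B_{\epsilon,\Upsilon}^2 s$ (so $s$ is a combination of $\sinh(B_{\epsilon,\Upsilon}\cdot)$ and $\cosh(B_{\epsilon,\Upsilon}\cdot)$). The four free constants are fixed uniquely by the four boundary conditions \eqref{eq:stationary boundary condition}, and a direct if tedious computation recovers precisely the expressions \eqref{eq:macroscopic profile bottom layer e>0}--\eqref{eq:macroscopic profile top layer e>0}. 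Uniqueness from (i) then guarantees that these are \emph{the} strong solutions.

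For (iii), since $\psi(i) \ge \epsilon > 0$ on each layer and $[0,1]$ is bounded, the exit time $\tau$ has exponential tails, so $u_i(y)$ is well-defined and bounded by $\max_{(\alpha,j)} \rho_{\alpha,j}$. Standard Dirichlet-problem theory for the uniformly elliptic $2\times 2$ system with the bounded zeroth-order switching term as a lower-order perturbation ensures that $u_i$ is $C^2$ on $(0,1)$ and extends continuously to $[0,1]$; the boundary values $u_i(0) = \Phi_i(0) = \rho_{L,i}$ and $u_i(1) = \Phi_i(1) = \rho_{R,i}$ are immediate because $\tau=0$ when $y\in\{0,1\}$. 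Applying Dynkin's formula to $u_{i_t}(X_t)$ stopped at $\tau$ and using the martingale property together with the strong Markov property yields $(\mathcal{L}u)(\cdot,i)=0$ on $(0,1)$, which is exactly \eqref{eq:stationary reaction diffusion with boundaries}. Uniqueness from (i) then gives $u_i\equiv \rho_i^{\text{stat}}$.

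The principal obstacle is the rigour of step (iii): establishing enough regularity of $u_i$ up to the boundary to justify Dynkin's formula in its martingale form. A self-contained alternative that bypasses the PDE regularity theory is to take (ii) as the definition of $\rho_i^{\text{stat}}$ and compute the two-sided exit distribution of the killed switching diffusion directly via the resolvent formula \eqref{eq: resolvent epsilon >0} at $\lambda = 0$; this reduces the stochastic representation to an explicit algebraic identity between the two closed-form expressions, at the cost of heavier computation.
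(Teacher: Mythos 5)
Your proposal is correct and follows essentially the same route as the paper: your uniqueness argument via the energy identity $\int_0^1 |\nabla w_0|^2\,\dd y + \epsilon\int_0^1|\nabla w_1|^2\,\dd y + \Upsilon\int_0^1(w_0-w_1)^2\,\dd y = 0$ is exactly the paper's, and the verification that the explicit profiles solve the system is done in the paper by direct inspection (your decoupling into $u_0+\epsilon u_1$ and $u_0-u_1$ is the same device the paper already uses for the microscopic recursion). The only difference is that the paper obtains the stochastic representation by citing \cite[p.~385, Eq.(4.7)]{F85} rather than re-deriving it via Dynkin's formula as you sketch, so the regularity issue you flag in step (iii) is outsourced to that reference.
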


\begin{proof}
It is straightforward to verify that for $\epsilon>0$ the macroscopic profiles $\rho_0,\rho_1$ defined in \eqref{eq:macroscopic profile bottom layer e>0}$-$\eqref{eq:macroscopic profile top layer e>0} are indeed uniformly continuous in $(0,1)$ and thus can be uniquely extended continuously to $[0,1]$, namely, by defining $\rhosi(0)=\rho_{(L,i)},\, \rhosi(1)=\rho_{(R,i)}$ for $i\in I$. Also $\rhosi \in C^\infty([0,1])$ for $i\in I$ and satisfy the stationary PDE \eqref{eq:stationary reaction diffusion with boundaries}, with the boundary conditions specified in \eqref{eq:stationary boundary condition}.

The stochastic representation of a solution of the system in \eqref{eq:stationary reaction diffusion with boundaries} follows from \cite[p385, Eq.(4.7)]{F85}. For the sake of completeness, we give the proof of uniqueness of the solution of \eqref{eq:stationary reaction diffusion with boundaries}. Let $u=(u_0,u_1)$ and $v=(v_0,v_1)$ be two solutions of the stationary reaction diffusion equation with the specified boundary conditions in \eqref{eq:stationary boundary condition}. Then $(w_0,w_1) :=(u_0-v_0,u_1-v_1)$ satisfies
\begin{equation}
	\label{eq:stationary reaction diffusion with zero boundary}
	\begin{cases}
		0 = \Delta w_0 + \Upsilon(w_1-w_0),\\
		0 = \epsilon \Delta w_1 + \Upsilon(w_0-w_1),
	\end{cases}
\end{equation}
with boundary conditions
\begin{equation}
	\label{eq:stationary zero boundary condtions}
		w_0(0) = w_0(1) = w_1(0) = w_1(1) = 0.
\end{equation}
Multiplying the two equations in \eqref{eq:stationary reaction diffusion with zero boundary} with $w_0$ and $w_1$, respectively, and using the identity
$$
w_i\Delta w_i = \nabla\cdot(w_i\nabla w_i)-|\nabla w_i|^2,\quad i\in I,
$$
we get
\begin{equation}
	\begin{cases}
		0 = \nabla\cdot(w_0\nabla w_0)-|\nabla w_0|^2+\Upsilon(w_1-w_0)w_0,\\
		0 = \epsilon\nabla\cdot(w_1\nabla w_1)-\epsilon|\nabla w_1|^2+\Upsilon(w_0-w_1)w_1.
	\end{cases}
\end{equation}
Integrating both equations by parts over $[0,1],$ we get
\begin{equation}
	\begin{aligned}
	0 &= -[w_0(1)\nabla w_0(1)-w_0(0)\nabla w_0(0)] - \int_{0}^{1} dy\,|\nabla w_0(y)|^2+\Upsilon\int_{0}^{1}dy\,(w_1(y)-w_0(y))w_0(y),\\
	0&=-\epsilon[w_1(1)\nabla w_1(1)-w_1(0)\nabla w_1(0)] - \epsilon\int_{0}^{1} dy\,|\nabla w_1(y)|^2+\Upsilon\int_{0}^{1}dy\,(w_0(y)-w_1(y))w_1(y).
	\end{aligned}
\end{equation}
Adding the above two equations and using the zero boundary conditions in \eqref{eq:stationary zero boundary condtions}, we have
\begin{equation}
\int_{0}^{1} dy\,|\nabla w_0(y)|^2+\epsilon\int_{0}^{1} dy\,|\nabla w_1(y)|^2+\Upsilon\int_{0}^{1}dy\,[w_1(y)-w_0(y)]^2 = 0.
\end{equation}
Since both $w_0$ and $w_1$ are continuous and $\epsilon>0,\Upsilon>0$,
it follows that
\begin{equation}
		w_0 = w_1,\quad
		\nabla w_0 = \nabla w_1 = 0,
\end{equation}
and so $w_0=w_1\equiv 0$.
\end{proof}

Note that, as a result of Theorem~\ref{theorem:stationary boundary value problem}, the four absorption probabilities of the switching diffusion process $\{(X_t,i_t)\,:\,t\geq 0\}$ starting from $(y,i)\in[0,1]\times I$ are indeed the respective coefficients of $\rho_{(L,0)},\rho_{(L,1)},\rho_{(R,0)}$, $\rho_{(R,1)}$ appearing in the expression of $\rhosi(y)$. Furthermore note that, as a consequence of Theorem~\ref{theorem:stationary boundary value problem} and the results in \cite[Section 3]{HA80II}, the time-dependent boundary-value problem
\begin{equation}
\label{eq:reaction diffusion with boundaries}
\begin{cases}
\partial_t \rho_0 = \Delta \rho_0 + \Upsilon(\rho_1-\rho_0),\\
\partial_t \rho_1 = \epsilon \Delta \rho_1 + \Upsilon(\rho_0-\rho_1),
\end{cases}
\end{equation}
with initial conditions
\begin{equation}
\label{eq:initial conditions for boundary}
\begin{cases}
\rho_0(x,0) = \bar \rho_0(x),\\
\rho_1(x,0) = \bar \rho_1(x),
\end{cases}
\end{equation}
and boundary conditions
\begin{equation}
\label{eq:boundary condition}
\begin{cases}
\rho_0(0,t) = \rho_{L,0}, \ 	\rho_0(1,t) = \rho_{R,0},\\
\rho_1(0,t) = \rho_{L,1}, \ 	\rho_1(1,t) = \rho_{R,1},
\end{cases}
\end{equation}
admits a unique solution given by 
\begin{equation}
\begin{cases}
\rho_0(x,t) =  \rho^{\text{hom}}_0(x,t)+ \rhosz(x),\\
\rho_1(x,t) = \rho^{\text{hom}}_1(x,t)+ \rhoso(x),
\end{cases}
\end{equation}
where
\begin{equation}
\label{eq: sol hom 0}
\rho_0^{\text{hom}}(x,t)=\ee^{-\Upsilon t}h_0(x,t)+\frac{\Upsilon}{1-\epsilon}
\ee^{-\Upsilon t}\int_{\epsilon t}^t \dd s\,\left( \left(\frac{s-\epsilon t}{t-s}\right)^{1/2}I_1(\upsilon(s))h_0(x,s)
+I_0(\upsilon(s))h_1(x,s) \right),
\end{equation}
\begin{equation}
\label{eq: sol hom 1}
\rho_1^{\text{hom}}(x,t)=\ee^{-\Upsilon t}h_1(x,\epsilon t)+\frac{\Upsilon}{1-\epsilon}\ee^{-\Upsilon t}\int_{\epsilon t}^t \dd s\,
\left( \left(\frac{s-\epsilon t}{t-s}\right)^{-1/2}I_1(\upsilon(s))h_1(x,s)+I_0(\upsilon(s))h_0(x,s) \right),
\end{equation} $\upsilon(s)=\frac{2\Upsilon}{1-\epsilon}((t-s)(s-\epsilon t))^{1/2}$, $I_0(\cdot)$ and $I_1(\cdot)$ are the modified Bessel functions, $h_0(x,t)$, $h_1(x,t)$ are the solutions of 
\begin{equation}
\begin{cases}
\partial_t h_0 = \Delta h_0,  \\
\partial_t h_1 = \Delta h_1, \\
h_0(x,0)=  \bar \rho_0(x)-\rhosz(x), \\
h_1(x,1)= \bar \rho_1(x) -\rhoso(x),\\
h_0(0,t)= h_0(1,t)= h_1(0,t)=h_1(1,t)=0,
\end{cases}
\end{equation}
and $\rhosz(x)$, $\rhoso(x)$ are given in \eqref{eq:macroscopic profile top layer e>0}.

We conclude this section by proving that the solution of the time-dependent boundary-value problem in \eqref{eq:reaction diffusion with boundaries} converges to the stationary profile in \eqref{eq:macroscopic profile top layer e>0}.

\begin{proposition}{\bf [Convergence to stationary profile]} 
Let $	\rhohz(x,t)$ and $\rhoho(x,t)$ be as in \eqref{eq: sol hom 0} and \eqref{eq: sol hom 1}, respectively, i.e., the solutions of the boundary-value problem \eqref{eq:reaction diffusion with boundaries} with zero boundary conditions and initial conditions given by $\rhohz(x,0)=  \bar \rho_0(x)-\rhosz(x)$ and $\rhoho(x,0)= \bar \rho_1(x) -\rhoso(x)$. Then, for any $k\in\N$, 
$$
\lim_{t\to \infty} \Big[\|\rhohz(x,t)\|_{C^k(0,1)}+\|\rhoho(x,t)\|_{C^k(0,1)}\Big]=0.
$$
\end{proposition}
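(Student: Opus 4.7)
The plan is to exploit that $\rhohz(\cdot,t)$ and $\rhoho(\cdot,t)$ solve the reaction-diffusion system \eqref{eq:reaction diffusion with boundaries} on $[0,1]$ with \emph{zero} Dirichlet boundary conditions, and to diagonalise the evolution via Fourier sine expansion. Writing
\begin{equation*}
\rho_i^{\text{hom}}(x,t)=\sum_{n\geq 1} c_{n,i}(t)\sin(n\pi x),\qquad i\in\{0,1\},
\end{equation*}
the coupled PDE decouples across modes: the vector $\mathbf{c}_n(t):=(c_{n,0}(t),c_{n,1}(t))^T$ satisfies $\dot{\mathbf{c}}_n=M_n\mathbf{c}_n$ with
\begin{equation*}
M_n=\begin{pmatrix} -(n\pi)^2-\Upsilon & \Upsilon\\ \Upsilon & -\epsilon(n\pi)^2-\Upsilon\end{pmatrix}.
\end{equation*}

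First I would establish a mode-wise spectral gap. Since $\mathrm{tr}(M_n)=-(1+\epsilon)(n\pi)^2-2\Upsilon<0$ and $\det(M_n)=\epsilon(n\pi)^4+(1+\epsilon)\Upsilon(n\pi)^2>0$, both eigenvalues of $M_n$ are real and strictly negative. Computing the discriminant $(1-\epsilon)^2(n\pi)^4+4\Upsilon^2$ explicitly and estimating its square root by $(1-\epsilon)(n\pi)^2+2\Upsilon$ from above gives the bound $-\lambda_n^+\geq\epsilon\pi^2 n^2$, so that $\nu_n:=-\lambda_n^+\geq c n^2$ for some $c=c(\epsilon,\Upsilon)>0$ when $\epsilon>0$, and in particular $\mu:=\inf_{n\geq 1}\nu_n>0$. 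Consequently $\|\mathbf{c}_n(t)\|\leq C e^{-\nu_n t}\|\mathbf{c}_n(0)\|$ for an absolute constant $C$.

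Next I would establish $L^2$ decay by a standard energy estimate. Multiplying the first equation in \eqref{eq:reaction diffusion with boundaries} by $\rhohz$ and the second by $\rhoho$, integrating over $[0,1]$, using integration by parts together with the vanishing boundary values, and adding, yields
\begin{equation*}
\tfrac12\tfrac{\dd}{\dd t}\bigl(\|\rhohz\|_{L^2}^2+\|\rhoho\|_{L^2}^2\bigr)=-\|\partial_x\rhohz\|_{L^2}^2-\epsilon\|\partial_x\rhoho\|_{L^2}^2-\Upsilon\|\rhohz-\rhoho\|_{L^2}^2.
\end{equation*}
Dropping the last (non-positive) term and applying Poincaré's inequality $\|f\|_{L^2}^2\leq\pi^{-2}\|\partial_x f\|_{L^2}^2$ on $H^1_0([0,1])$ gives, via Grönwall, exponential $L^2$ decay at rate $2\pi^2\min(1,\epsilon)$. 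The initial $L^2$ norms are finite because $\bar\rho_i-\rhosi$ is continuous and bounded on $[0,1]$.

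Finally, to upgrade $L^2$ decay to $C^k$ decay, I would combine mode-wise smoothing with Parseval. Writing $\mathbf{c}_n(t)=e^{M_n t/2}\mathbf{c}_n(t/2)$ and differentiating the sine series term-by-term, Cauchy-Schwarz yields, for every $k\in\N$,
\begin{equation*}
\|\partial_x^k\rho_i^{\text{hom}}(\cdot,t)\|_{C^0(0,1)}\leq \sum_{n\geq 1}(n\pi)^k|c_{n,i}(t)|\leq C_k \Big(\sum_{n\geq 1}n^{2k}e^{-\nu_n t}\Big)^{1/2}\|\rho_i^{\text{hom}}(\cdot,t/2)\|_{L^2}.
\end{equation*}
Since $\nu_n\geq cn^2+\mu$, the series in parentheses is $O(t^{-(2k+1)/4}e^{-\mu t/2})$ as $t\to\infty$, while the last factor decays exponentially by the previous step. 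Summing over $0\leq j\leq k$ yields $\lim_{t\to\infty}\|\rhohz(\cdot,t)\|_{C^k(0,1)}=\lim_{t\to\infty}\|\rhoho(\cdot,t)\|_{C^k(0,1)}=0$, as desired. The main obstacle is the third step: transferring $L^2$ decay to uniform decay of arbitrarily high derivatives hinges on the parabolic-type bound $\nu_n\gtrsim n^2$, which only holds for $\epsilon>0$. If one wishes to cover $\epsilon=0$ the second equation degenerates to an ODE in $t$ whose spectral gap saturates at $\Upsilon$, and one must instead bootstrap regularity of $\rhoho$ from $\rhohz$ through the coupling $\partial_t\rhoho=\Upsilon(\rhohz-\rhoho)$, at the cost of imposing additional smoothness on $\bar\rho_1-\rhoso$.
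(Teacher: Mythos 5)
Your proof is correct, and it shares the paper's first half (the $L^2$ energy estimate obtained by testing the system with $(\rho_0^{\rm hom},\rho_1^{\rm hom})$, integrating by parts, summing, and applying Poincar\'e) but takes a genuinely different route for the key step of upgrading $L^2$ decay to $C^k$ decay. The paper argues abstractly: the operator $A$ with domain $H^2(0,1)\cap H^1_0(0,1)$ generates an analytic semigroup $\mathcal S_t$, the identity $A^k\vec\rho(t)=\mathcal S_{t-1}(A\mathcal S_{1/k})^k(\vec{\bar\rho}^{\rm hom})$ together with the smoothing bound from Pazy reduces $\|A^k\vec\rho(t)\|_{L^2}\to 0$ to the $L^2$ decay already proved, and the embeddings $D(A^k)\hookrightarrow H^{2k}(0,1)\hookrightarrow C^k(0,1)$ finish the job. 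You instead diagonalise explicitly in the Fourier sine basis, verify that each $2\times2$ mode matrix $M_n$ is symmetric with both eigenvalues $\le-\epsilon\pi^2 n^2$ (your trace/determinant/discriminant computations are all correct, and symmetry gives you the constant $C=1$ in $\|e^{M_nt}\|\le e^{-\nu_nt}$), and then combine the half-time splitting $\mathbf c_n(t)=e^{M_nt/2}\mathbf c_n(t/2)$ with Cauchy--Schwarz and Parseval to control $\sum_n(n\pi)^k|c_{n,i}(t)|$, which dominates $\|\partial_x^k\rho_i^{\rm hom}(\cdot,t)\|_{C^0}$ and justifies term-by-term differentiation. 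Your approach is more elementary and yields explicit quantitative rates (including the parabolic gain $\nu_n\gtrsim n^2$, which is exactly what the abstract analytic-smoothing estimate encodes), at the cost of relying on constant coefficients and the explicit eigenbasis of the Dirichlet Laplacian; the paper's semigroup argument is less explicit but would survive perturbations of the operator. Two small points worth tightening: you should note that uniqueness of the solution (which follows from your own energy identity applied to the difference of two solutions) identifies the Fourier-series solution with the $\rho_i^{\rm hom}$ defined by \eqref{eq: sol hom 0}--\eqref{eq: sol hom 1}; and your closing caveat about $\epsilon=0$ is harmless but moot, since the proposition sits in the part of the paper that has already restricted to $\epsilon>0$.
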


\begin{proof}
We start by showing that
\begin{equation}\label{eq: l2 conv}
\lim\limits_{t\to \infty}\,\Big[ \|\rhohz(x,t)\|_{L^2(0,1)}+\|\rhoho(x,t)\|_{L^2(0,1)}\Big]=0.
\end{equation}	
Multiply the first equation of \eqref{eq:reaction diffusion with boundaries} by $\rho_0$ and the second equation by $\rho_1$. Integration by parts yields 
\begin{equation}
\begin{cases}
\partial_t\left( \int_0^1 \dd x\,\rho_0^2 \right)=-\int_0^1 \dd x\,|\partial_x \rho_0|^2
+\Upsilon \int_0^1 \dd x\,(\rho_1\rho_0-\rho_0^2),\\
\partial_t\left( \int_0^1 \dd x\,\rho_1^2(x,t) \right)  
=-\epsilon\int_0^1 \dd x\,|\partial_x \rho_1|^2 +\Upsilon \int_0^1 \dd x\,(\rho_0\rho_1-\rho_1^2).  
\end{cases}
\end{equation}
Summing the two equations and defining $E(t):= \int_0^1 \dd x\,(\rho_0^2 +\rho_1^2)$, we obtain
\begin{equation}
\partial_tE(t)=-\left(\int_0^1 \dd x\,|\partial_x \rho_0|^2  + \epsilon\int_0^1 \dd x\,|\partial_x \rho_1|^2 \right)
 -\Upsilon \int_0^1 \dd x\,(\rho_0-\rho_1)^2.
\end{equation}
The Poincar\'e inequality implies that $\int_0^1 \dd x\,|\partial_x \rho_0|^2 + \epsilon\int_0^1 \dd x\,|\partial_x \rho_1|^2 \ge C_p E(t)$, in particular, $\partial_tE(t)\le -\epsilon C_p E(t)$, from which we obtain 
$$
E(t)\le \ee^{-C_pt}E(0),
$$
and hence \eqref{eq: l2 conv}.

From \cite[Theorem 2.1]{P83} it follows that 
$$
A:=\left[\begin{array}{c c c c}
\Delta-\Upsilon & \Upsilon \\
\Upsilon & \epsilon \Delta-\Upsilon\\
\end{array}
\right],
$$ 
with domain $D(A)=H^2(0,1)\cap H_0^1(0,1)$, generates a semigroup $\{\mathcal S_t\colon\, t\ge 0\}$. If we set $\vec{\rho}(t)=\mathcal S_t (\vec{\bar{\rho}}^{\text{hom}})$, with $\vec{\bar{\rho}}^{\text{hom}}=\vec{\bar{\rho}}-\vec{\rho}^{\text{stat}}$, then by the semigroup property we have 
$$
\vec{\rho}(t)=\mathcal S_{t-1}(\mathcal S_{1/k})^k(\vec{\bar{\rho}}^{\text{\text{hom}}}), \qquad t \geq 1,
$$ 
and hence $A^k\vec{\rho}(t)=\mathcal S_{t-1}(A\mathcal S_{1/k})^k(\vec{\bar{\rho}}^{\text{hom}})$. If we set $\vec{p}:=(A\mathcal S_{1/k})^k(\vec{\bar{\rho}}^{\text{hom}})$, then we obtain, by \cite[Theorem 5.2(d)]{P83},
$$
\| A^k \vec{\rho}(t) \|_{L^2(0,1)}\le \| \mathcal S_{t-1}\vec{p}\|_{L^2(0,1)},
$$
where $\lim_{t\to \infty}\| \mathcal S_{t-1}\vec{p}\|_{L^2(0,1)}=0$ by the first part of the proof. The compact embedding 
$$
D(A^k)\hookrightarrow H^{2k}(0,1)\hookrightarrow C^k(0,1), \qquad k\in\N,
$$
concludes the proof.
\end{proof}


\subsection{The stationary current}
\label{ss.statcurrent}

In this section we compute the expected current in the non-equilibrium steady state that is induced by different densities at the boundaries. We consider the microscopic and macroscopic systems, respectively.


\paragraph{Microscopic system.}

We start by defining the notion of current. The microscopic currents are associated with the edges of the underlying two-layer graph. 
 In our setting, we denote by ${\cal J}^0_{x,x+1}(t)$ and ${\cal J}^1_{x,x+1}(t)$ the instantaneous current through the horizontal edge $(x,x+1)$, $x\in V$, of the bottom layer, respectively, top layer at time $t$. Obviously,
\begin{equation*}
{\cal J}^0_{x,x+1}(t) = \eta_{0}(x,t) - \eta_{0}(x+1,t), 
\qquad\qquad
{\cal J}^1_{x,x+1}(t) =  \epsilon [\eta_{1}(x,t) -  \eta_{1}(x+1,t)].
\end{equation*}
We are interested in the stationary currents ${J}^0_{x,x+1}$, respectively, ${J}^1_{x,x+1}$, which are obtained as
\begin{equation}
\label{curr01}
J^0_{x,x+1} = \E_{stat}[\eta_{0}(x) - \eta_{0}(x+1)], 
\qquad\qquad
J^1_{x,x+1} =  \epsilon \E_{stat}[\eta_{1}(x) -  \eta_{1}(x+1)], 
\end{equation}
where $\E_{stat}$ denotes expectation w.r.t.\ the unique invariant probability measure of the microscopic system $\{\eta(t)\colon\, t \geq 0\}$ with $\eta(t) = \{\eta_{0}(x,t),\eta_{1}(x,t)\}_{x\in V}$. In other words, $J^0_{x,x+1}$ and $J^1_{x,x+1}$ give the average flux of particles of type $0$ and type $1$ across the bond $(x,x+1)$ due to diffusion. 

Of course, the average number of particle at each site varies in time also as a consequence of the reaction term:
\begin{eqnarray*}
\frac{d}{dt} \E[\eta_0(x,t)] 
& = & \E[{\cal J}^0_{x-1,x}(t) - {\cal J}^0_{x,x+1}(t)] +\gamma( \E[\eta_1(x,t)] - \E[\eta_0(x,t)]), \\
\frac{d}{dt} \E[\eta_1(x,t)] 
& = & \E[{\cal J}^1_{x-1,x}(t) - {\cal J}^1_{x,x+1}(t)] + \gamma(\E[\eta_0(x,t)] -  \E[\eta_1(x,t)]).
\end{eqnarray*}
Summing these equations, we see that there is no contribution of the reaction part to the variation of the average number of particles at site $x$:
\begin{equation*}
\frac{d}{dt} \E[\eta_0(x,t) + \eta_1(x,t)] = \E[{\cal J}_{x-1,x}(t) - {\cal J}_{x,x+1}(t)].
\end{equation*}
The sum 
\begin{equation}
\label{currtot}
J_{x,x+1} = J^0_{x,x+1} + J^1_{x,x+1},
\end{equation}
with $J^0_{x,x+1}$ and $J^1_{x,x+1}$ defined in \eqref{curr01}, will be called the {\em stationary current} between sites at $x,x+1$, $x\in V$, which is responsible for the variation of the total average number of particles at each site, regardless of their type.

\begin{proposition}{\bf [Stationary microscopic current]}
For  $x \in \{2,\ldots,N-1\}$ the stationary currents defined in \eqref{curr01} are given by
\begin{equation}
\label{eq: average microscopic current j0 for epsilon = 0}
J^0_{x,x+1} = - \tfrac{1+\gamma}{1+N+2N\gamma}[\rho_{(R,0)}-\rho_{(L,0)}]-\tfrac{\gamma}{1+N+2N\gamma}[\rho_{(R,1)}-\rho_{(L,1)}],\quad J^1_{x,x+1} = 0,
\end{equation}
when $\epsilon=0$ and by
\begin{equation}
\label{eq: average microscopic current j for epsilon > 0}
\begin{aligned}
	J^0_{x,x+1} &= - \vec{c}_1\cdot \vec{\rho} - \epsilon [(\vec{c}_3\cdot \vec{\rho}) \alpha_1^x (\alpha_1 - 1) 
+  (\vec{c}_4\cdot \vec{\rho}) \alpha_2^x (\alpha_2 - 1)],\\
J^1_{x,x+1}  &= - \epsilon \vec{c}_1\cdot\vec{\rho} + \epsilon [(\vec{c}_3\cdot\vec{\rho}) \alpha_1^x (\alpha_1 - 1) 
+ (\vec{c}_4\cdot\vec{\rho}) \alpha_2^x (\alpha_2 - 1)],
\end{aligned}
\end{equation}
when $\epsilon>0,$ where $\vec{c}_1, \vec{c}_3, \vec{c}_4$ are the vectors defined in \eqref{eq:definition of c1,c2,c3,c4} of Appendix~\ref{s.app*}, and $\alpha_1,\alpha_2$ are defined in \eqref{eq:definition of two roots}. As a consequence, the current $J_{x,x+1}= J^0_{x,x+1} + J^1_{x,x+1}$ is independent of $x$ and is given by
\begin{equation}
	J_{x,x+1} =  - \tfrac{1+\gamma}{1+N+2N\gamma}[\rho_{(R,0)}-\rho_{(L,0)}]-\tfrac{\gamma}{1+N+2N\gamma}[\rho_{(R,1)}-\rho_{(L,1)}]
\end{equation}
when $\epsilon = 0$ and 
\begin{equation}
\label{eq: average microscopic current for epsilon > 0}
J_{x,x+1} = - (1+\epsilon)\left[C_1\,(\rho_{R,0}-\rho_{L,0})+\epsilon\,C_2\,(\rho_{R,1}-\rho_{L,1})\right]
\end{equation}
when $\epsilon>0$, where
\begin{equation}
	\label{eq:definition of micro-current coefficients}
\begin{aligned}
C_1 &= \frac{[\alpha_1(1-\epsilon)(\alpha_1^{N-1}-1) 
+ \epsilon\,(\alpha_1^{N+1}-1)]}{\alpha_1(1-\epsilon)(\alpha_1^{N-1}-1)(N+1)+2\epsilon\,(\alpha_1^{N+1}-1)(N+\epsilon)},\\
C_2 &= \frac{(\alpha_1^{N+1}-1)}{\alpha_1(1-\epsilon)(\alpha_1^{N-1}-1)(N+1)
+2\epsilon\,(\alpha_1^{N+1}-1)(N+\epsilon)}.
\end{aligned}
\end{equation}
\end{proposition}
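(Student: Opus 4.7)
The plan is a direct calculation based on the explicit stationary profiles already derived. By stationarity, definition \eqref{curr01} becomes
\begin{equation*}
J^0_{x,x+1} = \theta_0(x) - \theta_0(x+1), \qquad J^1_{x,x+1} = \epsilon\,[\theta_1(x) - \theta_1(x+1)],
\end{equation*}
so the proposition reduces to reading off one-step increments of the known profiles and summing them.

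For $\epsilon = 0$, \eqref{eq:microscopic profile bottom layer e=0}--\eqref{eq:microscopic profile top layer e=0} give that $\theta_0$ is affine on $V$ and that $\theta_1(x) = \theta_0(x)$ for $x \in \{2,\ldots,N-1\}$. Extracting the slope of $\theta_0$ from \eqref{eq:microscopic profile bottom layer e=0} immediately yields \eqref{eq: average microscopic current j0 for epsilon = 0}, while $J^1_{x,x+1} = 0$ is forced by the prefactor $\epsilon = 0$. Nothing more is needed in this case.

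For $\epsilon > 0$, I substitute \eqref{eq:microscopic profile explicit}. The affine part $(\vec{c}_1\cdot\vec{\rho})\,x$ contributes $-\vec{c}_1\cdot\vec{\rho}$ to $\theta_0(x) - \theta_0(x+1)$ (and $-\epsilon\,\vec{c}_1\cdot\vec{\rho}$ after multiplication by $\epsilon$ in $J^1_{x,x+1}$), whereas the geometric parts $\alpha_i^x$ contribute factors $-\alpha_i^x(\alpha_i - 1)$ that appear with prefactor $+\epsilon$ in $\theta_0$ and $-1$ in $\theta_1$; these match \eqref{eq: average microscopic current j for epsilon > 0} at once. Adding the two currents, the geometric contributions cancel exactly and leave $J_{x,x+1} = -(1+\epsilon)(\vec{c}_1\cdot\vec{\rho})$, which is manifestly independent of $x$. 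This cancellation is not coincidental: it mirrors the fact that the decoupled variable $\vec{\tau}_x = \vec{p}_x + \epsilon\vec{q}_x$ of \eqref{eq:decouple tranformation} is harmonic by \eqref{eq:recursive equation} and therefore affine in $x$, and it is also enforced a priori by stationarity, since adding the balance equations for $\E[\eta_0(x)]$ and $\E[\eta_1(x)]$ kills the reaction term and yields $J_{x-1,x} = J_{x,x+1}$.

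The final step is to identify the constants $C_1, C_2$ of \eqref{eq:definition of micro-current coefficients} from the intrinsic expression $\vec{c}_1\cdot\vec{\rho}$. This amounts to expanding the entries of $\vec{c}_1$ using the closed form \eqref{eq:definition of c1,c2,c3,c4} from Appendix~\ref{s.app*} (the first row of $(1+\epsilon)(M_\epsilon^{-1})^T$, with $M_\epsilon$ as in \eqref{eq:matrix_def}) and using the product-of-roots relation $\alpha_1\alpha_2 = 1$ from \eqref{eq:product of two roots} to eliminate $\alpha_2$ in favour of $\alpha_1$. Left--right symmetry of the underlying linear system \eqref{eq:linear system for absorption probability} predicts in advance the antisymmetric structure $\vec{c}_1\cdot\vec{\rho} = C_1(\rho_{R,0}-\rho_{L,0}) + \epsilon\,C_2(\rho_{R,1}-\rho_{L,1})$, and combining this with the prefactor $-(1+\epsilon)$ delivers \eqref{eq: average microscopic current for epsilon > 0}. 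The only real obstacle is the bookkeeping for the $4\times 4$ inverse appearing in $M_\epsilon^{-1}$; the calculation is otherwise mechanical and requires no additional probabilistic input.
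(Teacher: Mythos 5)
Your proposal is correct and follows essentially the same route as the paper: compute $J^0_{x,x+1}=\theta_0(x)-\theta_0(x+1)$ and $J^1_{x,x+1}=\epsilon[\theta_1(x)-\theta_1(x+1)]$ directly from the explicit stationary profiles in \eqref{eq:microscopic profile bottom layer e=0} and \eqref{eq:microscopic profile explicit}, observe the exact cancellation of the geometric terms in the sum so that $J_{x,x+1}=-(1+\epsilon)\,\vec{c}_1\cdot\vec{\rho}$, and then unpack $\vec{c}_1$ via Appendix~\ref{s.app*} and $\alpha_1\alpha_2=1$ to obtain $C_1,C_2$. The extra remarks you add (harmonicity of $\vec{\tau}_x$ and the stationarity argument forcing $x$-independence) are sound and slightly more illuminating than the paper's bare computation, but they do not alter the argument.
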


\begin{proof}
From \eqref{curr01} we have
\begin{equation}
J^0_{x,x+1} =  \theta_0(x)  - \theta_0(x+1),
\qquad\qquad
J^1_{x,x+1} =  \epsilon [\theta_1(x)  - \theta_1(x+1)],
\end{equation}
where $\theta_0(\cdot),\theta_1(\cdot)$ are the average microscopic profiles. Thus, when $\epsilon=0$, the expressions of $J^0_{x,x+1},J^1_{x,x+1}$ and consequently $J_{x,x+1}$ follow directly from \eqref{eq:microscopic profile bottom layer e=0}. 

For $\epsilon>0$, using the expressions of $\theta_0(\cdot),\theta_1(\cdot)$ in \eqref{eq:microscopic profile explicit}, we see that
\begin{equation}
\begin{aligned}		
J^0_{x,x+1} &= \theta_0(x)-\theta_0(x+1)= - \vec{c}_1\cdot \vec{\rho} - \epsilon [(\vec{c}_3\cdot \vec{\rho}) \alpha_1^x (\alpha_1 - 1) 
+  (\vec{c}_4\cdot \vec{\rho}) \alpha_2^x (\alpha_2 - 1)],\\
J^1_{x,x+1}  &=\epsilon[\theta_1(x)-\theta_1(x+1)]= - \epsilon \vec{c}_1\cdot\vec{\rho} + \epsilon [(\vec{c}_3\cdot\vec{\rho}) \alpha_1^x (\alpha_1 - 1) 
+ (\vec{c}_4\cdot\vec{\rho}) \alpha_2^x (\alpha_2 - 1)],
\end{aligned}
\end{equation}
where $\vec{c}_1, \vec{c}_3, \vec{c}_4$ are the vectors defined in \eqref{eq:definition of c1,c2,c3,c4} of Appendix~\ref{s.app*}, and $\alpha_1,\alpha_2$ are defined in \eqref{eq:definition of two roots}. Adding the two equations, we also have
\begin{equation}
	J_{x,x+1} = J^0_{x,x+1} + J^1_{x,x+1} = -(1+\epsilon)\,\vec{c}_1\cdot\vec{\rho}= (1+\epsilon)\left[C_1\,(\rho_{R,0}-\rho_{L,0})+\epsilon\,C_2\,(\rho_{R,1}-\rho_{L,1})\right],
\end{equation}
where $C_1,C_2$ are as in \eqref{eq:definition of micro-current coefficients}.
\end{proof}


\paragraph{Macroscopic system.}

The microscopic current scales like $1/N$. Indeed, the currents associated to the two layers in the macroscopic system can be obtained from the microscopic currents, respectively, by defining
\begin{equation}
\label{currmacro}
J^0(y) = \lim_{N\to\infty}  N J^0_{\lfloor yN\rfloor,\lfloor yN\rfloor +1}, 
\qquad
J^1(y) = \lim_{N\to\infty}  N J^1_{\lfloor yN\rfloor,\lfloor yN\rfloor +1}. 
\end{equation}
Below we justify the existence of the two limits and thereby provide explicit expressions for the macroscopic currents.
\begin{proposition}{\bf [Stationary macroscopic current]}
For  $y \in (0,1)$ the stationary currents defined in \eqref{currmacro} are given by
\begin{equation}
\label{eq:average macroscopic current j0 for epsilon = 0}
J^0(y) = -\left[(\rho_{R,0}-\rho_{L,0})\right],\quad J^1(y) = 0,
\end{equation}
when $\epsilon=0$ and by
\begin{equation}
\label{eq:average macroscopic current j0 for epsilon > 0}
\begin{aligned}
J^0(y) &=\frac{\epsilon B_{\epsilon,\Upsilon}}{1+\epsilon}\left[\frac{\cosh \left[B_{\epsilon,\Upsilon}\,(1-y)\right]}
{\sinh \left[B_{\epsilon,\Upsilon}\right]}\,(\rho_{(L,0)}-\rho_{(L,1)})
-\frac{\cosh \left[B_{\epsilon,\Upsilon}\,y\right]}
{\sinh \left[B_{\epsilon,\Upsilon}\right]}\,(\rho_{(R,0)}-\rho_{(R,1)})\right]\\ &\quad-\frac{1}{1+\epsilon}\left[(\rho_{(R,0)}-\rho_{(L,0)})+\epsilon(\rho_{(R,1)}-\rho_{(L,1)})\right]
\end{aligned}
\end{equation}
and
\begin{equation}
\label{eq:average macroscopic current j1 for epsilon > 0}
\begin{aligned}
	J^1(y) &=-\frac{\epsilon B_{\epsilon,\Upsilon}}{1+\epsilon}\left[\frac{\cosh \left[B_{\epsilon,\Upsilon}\,(1-y)\right]}
	{\sinh \left[B_{\epsilon,\Upsilon}\right]}\,(\rho_{(L,0)}-\rho_{(L,1)})
	-\frac{\cosh \left[B_{\epsilon,\Upsilon}\,y\right]}
	{\sinh \left[B_{\epsilon,\Upsilon}\right]}\,(\rho_{(R,0)}-\rho_{(R,1)})\right]\\ &\quad-\frac{\epsilon}{1+\epsilon}\left[(\rho_{(R,0)}-\rho_{(L,0)})+\epsilon(\rho_{(R,1)}-\rho_{(L,1)})\right]
\end{aligned}	
\end{equation}	 	
when $\epsilon > 0$. As a consequence, the total current $J(y)= J^0(y) + J^1(y)$  is constant and is given by
\begin{equation}
\label{eq: average macroscopic current}
J(y) = -\left[(\rho_{R,0}-\rho_{L,0})+\epsilon\,(\rho_{R,1}-\rho_{L,1})\right].
\end{equation}
\end{proposition}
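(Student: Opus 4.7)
The plan is to deduce the macroscopic currents by passing to the limit $N \to \infty$ in the explicit microscopic expressions of the previous proposition, using the asymptotic behaviour of the roots $\alpha_1,\alpha_2$ and the coefficient vectors $\vec{c}_1,\vec{c}_3,\vec{c}_4$ under the diffusive scaling $\gamma_N = \Upsilon/N^2$.

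For $\epsilon=0$, the verification is immediate: multiplying the microscopic formula \eqref{eq: average microscopic current j0 for epsilon = 0} by $N$, we get
$$
N J^0_{x,x+1} = -\tfrac{N(1+\gamma_N)}{1+N+2N\gamma_N}\,[\rho_{R,0}-\rho_{L,0}] - \tfrac{N\gamma_N}{1+N+2N\gamma_N}\,[\rho_{R,1}-\rho_{L,1}],
$$
which, together with $N\gamma_N = \Upsilon/N \to 0$, gives the two formulas in \eqref{eq:average macroscopic current j0 for epsilon = 0}.

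For $\epsilon>0$, I would first Taylor-expand the roots defined in \eqref{eq:definition of two roots} around $\gamma_N=\Upsilon/N^2=0$, obtaining
$\alpha_{2}-1 \sim \sqrt{\gamma_N(1+1/\epsilon)} = B_{\epsilon,\Upsilon}/N$ and $\alpha_{1}-1 \sim -B_{\epsilon,\Upsilon}/N$, so that
$$
N(\alpha_1-1) \to -B_{\epsilon,\Upsilon},\qquad N(\alpha_2-1) \to B_{\epsilon,\Upsilon},\qquad \alpha_1^{\lfloor yN\rfloor}\to \ee^{-yB_{\epsilon,\Upsilon}},\qquad \alpha_2^{\lfloor yN\rfloor}\to \ee^{yB_{\epsilon,\Upsilon}}.
$$
The limits of $N\vec{c}_1$, $\vec{c}_3$ and $\vec{c}_4$ have already been recorded in the proof of the theorem on stationary macroscopic profiles. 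Substituting these limits in $NJ^0_{\lfloor yN\rfloor,\lfloor yN\rfloor+1}$ from \eqref{eq: average microscopic current j for epsilon > 0}, the constant part $-N\vec{c}_1\cdot\vec{\rho}$ produces the linear combination $\tfrac{1}{1+\epsilon}[\rho_{L,0}-\rho_{R,0}+\epsilon(\rho_{L,1}-\rho_{R,1})]$ that appears in \eqref{eq:average macroscopic current j0 for epsilon > 0}, while the exponential contribution $-\epsilon N[(\vec{c}_3\cdot\vec{\rho})\alpha_1^x(\alpha_1-1)+(\vec{c}_4\cdot\vec{\rho})\alpha_2^x(\alpha_2-1)]$ converges to
$$
\epsilon B_{\epsilon,\Upsilon}\,[(\vec{c}_3^\infty\cdot\vec{\rho})\,\ee^{-yB_{\epsilon,\Upsilon}} - (\vec{c}_4^\infty\cdot\vec{\rho})\,\ee^{yB_{\epsilon,\Upsilon}}].
$$
Plugging in the explicit form of $\vec{c}_3^\infty$ and $\vec{c}_4^\infty$ and grouping the terms according to the boundary parameters produces combinations of the type $(\rho_{L,0}-\rho_{L,1})[\ee^{B_{\epsilon,\Upsilon}(1-y)}+\ee^{-B_{\epsilon,\Upsilon}(1-y)}]$ and $(\rho_{R,0}-\rho_{R,1})[\ee^{yB_{\epsilon,\Upsilon}}+\ee^{-yB_{\epsilon,\Upsilon}}]$ divided by $\ee^{B_{\epsilon,\Upsilon}}-\ee^{-B_{\epsilon,\Upsilon}}$; rewriting $(\ee^z+\ee^{-z})/2=\cosh z$ and $(\ee^{B}-\ee^{-B})/2=\sinh B_{\epsilon,\Upsilon}$ yields exactly the bracket in \eqref{eq:average macroscopic current j0 for epsilon > 0}. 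The computation for $J^1(y)$ is the mirror image: its constant part has the opposite sign by a factor of $\epsilon$ because of the prefactor $\epsilon$ in front of $\vec{c}_1$ in \eqref{eq: average microscopic current j for epsilon > 0}, while the exponential part flips sign because the coefficient of the $\vec{c}_3,\vec{c}_4$ terms switches from $-\epsilon$ to $+\epsilon$.

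Finally, summing $J^0(y)$ and $J^1(y)$ cancels the exponential (layer-dependent) pieces, and what survives is only the $y$-independent linear combination $-[(\rho_{R,0}-\rho_{L,0})+\epsilon(\rho_{R,1}-\rho_{L,1})]$, giving \eqref{eq: average macroscopic current}. As for the main obstacle, no single step is conceptually difficult, but bookkeeping the asymptotics so that each $O(1/N)$ contribution is retained after multiplication by $N$, while all $o(1/N)$ contributions are safely discarded, is the place where care is required; in particular one must verify that the next-order terms in $\alpha_{1,2}-1\pm B_{\epsilon,\Upsilon}/N$ contribute only $o(1)$ to $N(\alpha_{1,2}-1)\alpha_{1,2}^{\lfloor yN\rfloor}$, which follows from the uniform convergence $\alpha_{1,2}^{\lfloor yN\rfloor}\to \ee^{\pm yB_{\epsilon,\Upsilon}}$ on compact subsets of $(0,1)$.
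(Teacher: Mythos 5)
Your proposal is correct and follows essentially the same route as the paper: both pass to the limit $N\to\infty$ in the explicit microscopic current formulas, using $N\gamma_N\to 0$ for $\epsilon=0$ and the asymptotics $N(\alpha_{1,2}-1)\to\mp B_{\epsilon,\Upsilon}$, $\alpha_{1,2}^{\lfloor yN\rfloor}\to \ee^{\mp yB_{\epsilon,\Upsilon}}$ together with the limits of $N\vec{c}_1,\vec{c}_3,\vec{c}_4$ for $\epsilon>0$. The only quibble is the phrasing that the constant part of $J^1$ ``has the opposite sign by a factor of $\epsilon$'': it has the \emph{same} sign as in $J^0$ but carries an extra factor $\epsilon$, while it is the exponential part that flips sign — your final formulas are nonetheless the correct ones.
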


\begin{proof}
For $\epsilon=0$ the claim easily follows from the expressions of $J^0_{x,x+1}, J^1_{x,x+1}$ given in \eqref{eq: average microscopic current j0 for epsilon = 0} and the fact that $\gamma_N\to 0$ as $N\to\infty$. 

When $\epsilon>0$, we first note the following:
\begin{equation}
	\begin{aligned}
		&\gamma_N N^2\overset{N\to\infty}{\longrightarrow} \Upsilon>0,\\
		&\lim\limits_{N\to\infty}\alpha_1= \lim\limits_{N\to\infty}\alpha_2=1,\\
		&\lim\limits_{N\to\infty}N(\alpha_1-1)=-B_{\epsilon,\Upsilon},\ \lim\limits_{N\to\infty}N(\alpha_2-1)=B_{\epsilon,\Upsilon},\\
		&\lim\limits_{N\to\infty}\alpha_1^N = \ee^{-B_{\epsilon,\Upsilon}},\quad \lim\limits_{N\to\infty}\alpha_2^N = \ee^{B_{\epsilon,\Upsilon}}.
	\end{aligned}
\end{equation}
Consequently, from the expressions for $(\vec{c}_i)_{1\leq i \leq 4}$ defined in \eqref{eq:definition of c1,c2,c3,c4}, we also have
\begin{equation}
	\begin{aligned}
		\lim\limits_{N\to\infty}N\vec{c}_1 &= \frac{1}{1+\epsilon}\,\big[
		\begin{array}{c c c c}
			-1 & -\epsilon & 1 & \epsilon
		\end{array}
		\big]^T,\\
		\lim\limits_{N\to\infty}\vec{c}_3 &= \frac{1}{1+\epsilon}\,\big[
		\begin{array}{c c c c}
			\frac{\ee^{B_{\epsilon,\Upsilon}}}{\ee^{B_{\epsilon,\Upsilon}}-\ee^{-B_{\epsilon,\Upsilon}}} & -\frac{\ee^{B_{\epsilon,\Upsilon}}}{\ee^{B_{\epsilon,\Upsilon}}-\ee^{-B_{\epsilon,\Upsilon}}} & -\frac{1}{\ee^{B_{\epsilon,\Upsilon}}-\ee^{-B_{\epsilon,\Upsilon}}} & \frac{1}{\ee^{B_{\epsilon,\Upsilon}}-\ee^{-B_{\epsilon,\Upsilon}}}
		\end{array}
		\big]^T,\\
		\lim\limits_{N\to\infty}\vec{c}_4 &= \frac{1}{1+\epsilon}\,\big[
		\begin{array}{c c c c}
			-\frac{\ee^{-B_{\epsilon,\Upsilon}}}{\ee^{B_{\epsilon,\Upsilon}}-\ee^{-B_{\epsilon,\Upsilon}}} & \frac{\ee^{-B_{\epsilon,\Upsilon}}}{\ee^{B_{\epsilon,\Upsilon}}-\ee^{-B_{\epsilon,\Upsilon}}} & \frac{1}{\ee^{B_{\epsilon,\Upsilon}}-\ee^{-B_{\epsilon,\Upsilon}}} & -\frac{1}{\ee^{B_{\epsilon,\Upsilon}}-\ee^{-B_{\epsilon,\Upsilon}}}
		\end{array}
		\big]^T.
	\end{aligned}
\end{equation}
Combining the above equations with \eqref{eq: average microscopic current j for epsilon > 0}, we have
\begin{equation}
	\begin{aligned}
		J^0(y) &= \lim_{N\to\infty}  N J^0_{\lfloor yN\rfloor,\lfloor yN\rfloor +1}\\
		&= -\epsilon B_{\epsilon,\Upsilon}\Big[\Big(\lim\limits_{N\to\infty}\vec{c}_4\cdot\vec{\rho}\Big)\,\ee^{B_{\epsilon,\Upsilon}y}-\Big(\lim\limits_{N\to\infty}\vec{c}_3\cdot\vec{\rho}\Big)\,\ee^{-B_{\epsilon,\Upsilon}y}\Big]-\Big(\lim\limits_{N\to\infty}N\vec{c}_1\cdot\vec{\rho}\Big)\\
		&=\frac{\epsilon B_{\epsilon,\Upsilon}}{1+\epsilon}\left[\frac{\cosh \left[B_{\epsilon,\Upsilon}\,(1-y)\right]}
		{\sinh \left[B_{\epsilon,\Upsilon}\right]}\,(\rho_{(L,0)}-\rho_{(L,1)})
		-\frac{\cosh \left[B_{\epsilon,\Upsilon}\,y\right]}
		{\sinh \left[B_{\epsilon,\Upsilon}\right]}\,(\rho_{(R,0)}-\rho_{(R,1)})\right]\\ &\quad-\frac{1}{1+\epsilon}\left[(\rho_{(R,0)}-\rho_{(L,0)})+\epsilon(\rho_{(R,1)}-\rho_{(L,1)})\right]
	\end{aligned} 
\end{equation}
and, similarly,
\begin{equation}
	\begin{aligned}
		J^1(y) &= \lim_{N\to\infty}  N J^1_{\lfloor yN\rfloor,\lfloor yN\rfloor +1}\\
		&= \epsilon B_{\epsilon,\Upsilon}\Big[\Big(\lim\limits_{N\to\infty}\vec{c}_4\cdot\vec{\rho}\Big)\,\ee^{B_{\epsilon,\Upsilon}y}-\Big(\lim\limits_{N\to\infty}\vec{c}_3\cdot\vec{\rho}\Big)\,\ee^{-B_{\epsilon,\Upsilon}y}\Big]-\epsilon\,\Big(\lim\limits_{N\to\infty}N\vec{c}_1\cdot\vec{\rho}\Big)\\
		&=-\frac{\epsilon B_{\epsilon,\Upsilon}}{1+\epsilon}\left[\frac{\cosh \left[B_{\epsilon,\Upsilon}\,(1-y)\right]}
		{\sinh \left[B_{\epsilon,\Upsilon}\right]}\,(\rho_{(L,0)}-\rho_{(L,1)})
		-\frac{\cosh \left[B_{\epsilon,\Upsilon}\,y\right]}
		{\sinh \left[B_{\epsilon,\Upsilon}\right]}\,(\rho_{(R,0)}-\rho_{(R,1)})\right]\\ &\quad-\frac{\epsilon}{1+\epsilon}\left[(\rho_{(R,0)}-\rho_{(L,0)})+\epsilon(\rho_{(R,1)}-\rho_{(L,1)})\right].
	\end{aligned} 
\end{equation}
Adding $J^0(y)$ and $J^1(y)$, we obtain the total current
\begin{equation}
	J(y)=J^0(y)+J^1(y)=-\left[(\rho_{R,0}-\rho_{L,0})+\epsilon\,(\rho_{R,1}-\rho_{L,1})\right],
\end{equation}
which is indeed independent of $y$.
\end{proof}

\begin{remark}{\bf [Currents]} \label{rem:relation of macro-current and macro-gradient}
{\rm Combining the expressions for the density profiles and the current, we see that
$$
J^0(y) = - \frac{d\rho_0}{dy} (y),	
\qquad
J^1(y)  = - \epsilon \frac{d\rho_1}{dy} (y).	
$$
}\hfill $\spadesuit$
\end{remark}


\subsection{Discussion: Fick's law  and uphill diffusion}
\label{ss.uphillres}
In this section we discuss the behaviour of the boundary-driven system as the parameter $\epsilon$ is varied. For simplicity we restrict our discussion to the macroscopic setting, although similar comments hold for the microscopic system as well.

In view of the previous results, we can rewrite the equations for the densities $\rho_0(y,t),\rho_1(y,t)$ as
\begin{equation*}
\begin{cases}
\partial_t \rho_0 = -\nabla  J^0 + \Upsilon(\rho_1-\rho_0),\\
\partial_t \rho_1 = -\nabla  J^1 + \Upsilon(\rho_0-\rho_1),\\
J_0 = - \nabla \rho_0, \\
J_1 =  -\epsilon \nabla \rho_1,
\end{cases}
\end{equation*}
which are complemented with the boundary values (for $\epsilon>0$)
\begin{equation*}
\label{eq:boundary condition2}
\begin{cases}
\rho_0(0,t) = \rho_{L,0}, \ 	\rho_0(1,t) = \rho_{R,0},\\
\rho_1(0,t) = \rho_{L,1}, \ 	\rho_1(1,t) = \rho_{R,1}.
\end{cases}
\end{equation*} 
We will be concerned with the total density $\rho = \rho_0 +\rho_1$, whose evolution equation does not contain the reaction part, and is given by
\begin{equation}
\label{hello}
\begin{cases}
\partial_t \rho=  - \nabla J,\\
J = - \nabla ( \rho_0 + \epsilon \rho_1),
\end{cases}
\end{equation}
with boundary values 
\begin{equation}
\label{eq:boundary condition3}
\begin{cases}
\rho(0,t) = \rho_L = \rho_{L,0}+ \rho_{R,0},\\
\rho(1,t) = \rho_R =  \rho_{R,0}+\rho_{R,1}.
\end{cases}
\end{equation}


\paragraph{Non-validity of Fick's law.} 

From \eqref{hello} we immediately see that Fick's law of mass transport is satisfied if and only if $\epsilon =1$. When we allow diffusion and reaction of slow and fast particles, i.e., $0 \le \epsilon <1$, Fick's law breaks down, since the current associated to the total mass is not proportional to the gradient of the total mass. Rather, the current $J$ is the sum of a contribution $J^0$ due to the diffusion of fast particles of type $0$ (at rate 1) and a contribution $J^1$ due to the diffusion of slow particles of type $1$ (at rate $\epsilon$). Interestingly, the violation of Fick's law opens up the possibility of several interesting phenomena that we discuss in what follows.


\paragraph{Equal boundary densities with non-zero current.} 

In a system with diffusion and reaction of slow and fast particles we may observe a {\em non-zero current when the total density has the same value at the two boundaries}. This is different from what is observed in standard diffusive systems driven by boundary reservoirs, where in order to have a stationary current it is necessary that the reservoirs have different chemical potentials, and therefore different densities, at the boundaries.

Let us, for instance, consider the specific case when $\rho_{L,0} =\rho_{R,1} = 2$ and $\rho_{L,1} = \rho_{R,0} = 4$, which indeed implies equal densities at the boundaries given by $\rho_{L} =\rho_{R} = 6$. The density profiles and currents are displayed in Fig.~\ref{fig:density} for two values of $\epsilon$, which shows the comparison between the Fick-regime $\epsilon = 1$ (left panels) and the non-Fick-regime with very slow particles $\epsilon = 0.001$ (right panels).

On the one hand, in the Fick-regime the profile of both types of particles interpolates between the boundary values, with a slightly non-linear shape that has been quantified precisely in \eqref{eq:macroscopic profile bottom layer e>0}--\eqref{eq:macroscopic profile top layer e>0}. Furthermore, in the same regime $\epsilon=1$, the total density profile is flat and the total current $J$ vanishes because $J^0(y) = -J^1(y)$ for all $y\in [0,1]$.

On the other hand, in the non-Fick-regime with $\epsilon = 0.001$, the stationary macroscopic profile for the fast particles interpolates between the boundary values almost linearly (see \eqref{eq:macroscopic continuity in epsilon}), whereas the profile for the slow particles is non-monotone: it has two bumps at the boundaries and in the bulk closely follows the other profile. This non-monotonicity in the profile of the slow particles is due to the non-uniform convergence in the limit $\epsilon\downarrow 0$, as pointed out in the last part of Remark~\ref{rem:uniform convergence of macroscopic profile}. As a consequence, the total density profile is not flat and has two bumps at the boundaries. Most strikingly, the total current is $J=-2$, since now the current of the bottom layer $J^0$ is dominating, while the current of the bottom layer $J^1$ is small (order $\epsilon$).

\begin{figure}[!htbp] 
\begin{subfigure}[b]{0.5\linewidth}
\centering
\includegraphics[width=0.75\linewidth]{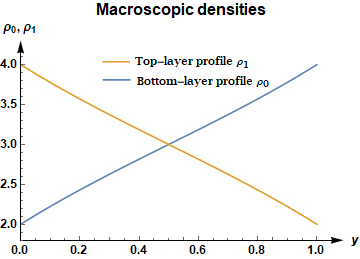} 
\label{fig:density_2} 
\vspace{4ex}
\end{subfigure}
\begin{subfigure}[b]{0.5\linewidth}
\centering
\includegraphics[width=0.75\linewidth]{./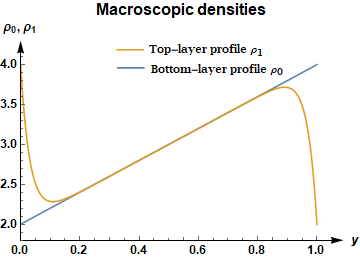} 
\label{fig:density_2_e.001} 
\vspace{4ex}
\end{subfigure} 
\begin{subfigure}[b]{0.5\linewidth}
\centering
\includegraphics[width=0.75\linewidth]{./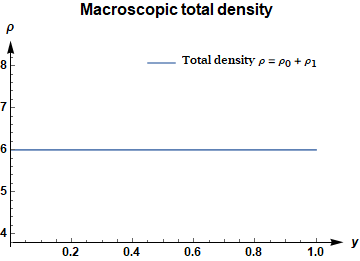} 
\vspace{4ex} 
\end{subfigure}
\begin{subfigure}[b]{0.5\linewidth}
\centering
\includegraphics[width=0.75\linewidth]{./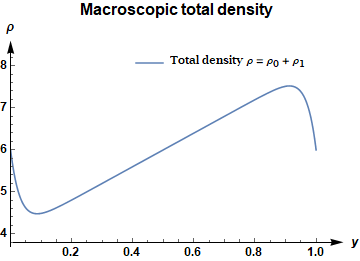} 
\vspace{4ex}
\end{subfigure}
\begin{subfigure}[b]{0.5\linewidth}
\centering
\includegraphics[width=0.75\linewidth]{./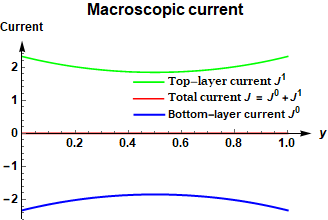} 
\end{subfigure}
\begin{subfigure}[b]{0.5\linewidth}
\centering
\includegraphics[width=0.75\linewidth]{./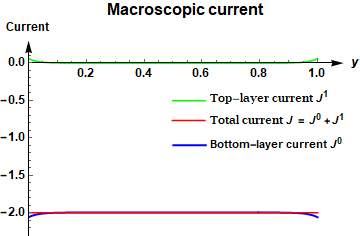} 
\end{subfigure}
\caption{{\small Macroscopic profiles of the densities for slow and fast particles (top panels), macroscopic profile of the total density (central panels), and the currents (bottom panels). Here,  $\rho_{(L,0)}=2,\,\rho_{(L,1)}=4,\,\rho_{(R,0)}=4$ and $\rho_{(R,1)}=2, \Upsilon=1$. For the panels in the left column, $\epsilon=1$ and for the panels in the right column, $\epsilon=0.001$.}}
\label{fig:density}
\end{figure}
\paragraph{Unequal boundary densities with uphill diffusion.}
As argued earlier, since the system does not always obey Fick's law, by tuning the parameters $\rho_{(L,0)},\rho_{(L,1)},\rho_{(R,0)},\rho_{(R,1)}$ and $\epsilon$, we can push the system into a regime where the total current is such that $J<0$ and the total densities are such that $\rho_R < \rho_L $, where $\rho_R = \rho_{(R,0)}+\rho_{(R,1)}$ and $\rho_L = \rho_{(L,0)}+\rho_{(L,1)}$. In this regime, {\em the current goes uphill}, since the total density of particles at the right is lower than at the left, yet the average current is negative.

For an illustration, consider the case when $\rho_{L,1} =6, \rho_{R,0}=4$ and $\rho_{L,0} =\rho_{R,1} = 2$, which implies $\rho_{L} = 8$ and  $\rho_{R} = 6$ and thus $\rho_{R} < \rho_{L}$. The density profiles and currents are shown in Fig.~\ref{fig:density2} for two values of $\epsilon$, in particular, a comparison between the Fick-regime $\epsilon = 1$ (left panels) and the non-Fick-regime with very slow particles $\epsilon = 0.001$ (right panels). As can be seen in the figure, when $\epsilon=1,$ the system obeys Fick's law: the total density linearly interpolates between the two total boundary densities 8 and 6, respectively. The average total stationary current is positive as predicted by Fick's law. However, in the \emph{uphill} regime, the total density is non-linear and the gradient of the total density is not proportional to the total current, violating Fick's law. The total current is negative and is effectively dominated by the current of the fast particles. It will be shown later that the transition into the uphill regime happens at the critical value $\epsilon = \tfrac{|\rho_{(R,0)}-\rho_{(L,0)}|}{|\rho_{(R,1)}-\rho_{(L,1)}|} = \tfrac{1}{2}$. In the limit $\epsilon\downarrow 0$ the total density profile and the current always get dominated in the bulk by the profile and the current of the fast particles, respectively. When $\epsilon<\tfrac{1}{2}$, even though the density of the slow particles makes the total density near the boundaries such that $\rho_R < \rho_L$, it is not strong enough to help the system overcome the domination of the fast particles in the bulk, and so the effective total current goes in the same direction as the current of the fast particles, producing an uphill current. 
\begin{figure}[htbp] 
\begin{subfigure}[b]{0.5\linewidth}
\centering
\includegraphics[width=0.75\linewidth]{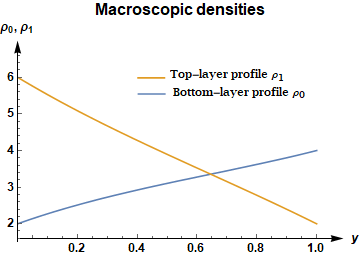} 
\vspace{4ex}
\end{subfigure}
\begin{subfigure}[b]{0.5\linewidth}
\centering
\includegraphics[width=0.75\linewidth]{./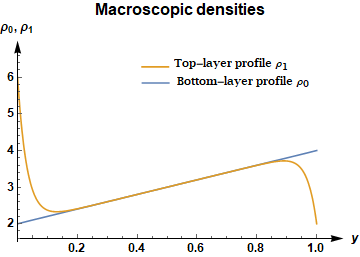} 
\vspace{4ex}
\end{subfigure} 
\begin{subfigure}[b]{0.5\linewidth}
\centering
\includegraphics[width=0.75\linewidth]{./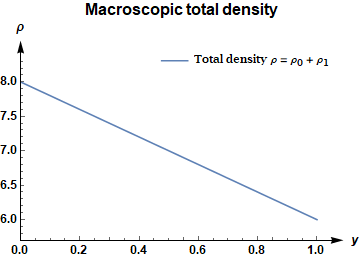} 
\vspace{4ex} 
\end{subfigure}
\begin{subfigure}[b]{0.5\linewidth}
\centering
\includegraphics[width=0.75\linewidth]{./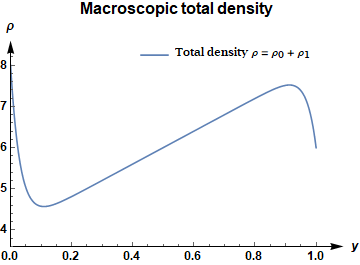} 
\label{fig:total_density_1_e0012642}
\vspace{4ex}
\end{subfigure}
\begin{subfigure}[b]{0.5\linewidth}
\centering
\includegraphics[width=0.75\linewidth]{./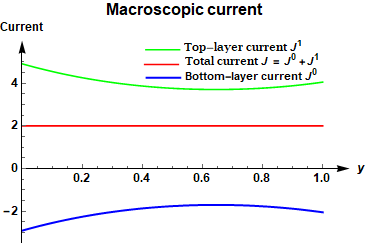} 
\end{subfigure}
\begin{subfigure}[b]{0.5\linewidth}
\centering
\includegraphics[width=0.75\linewidth]{./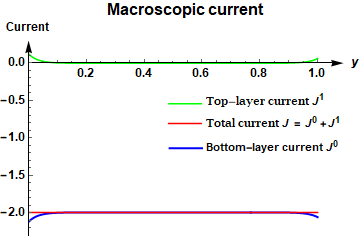} 
\label{fig:current_e0012642}
\end{subfigure}
\caption{\small Macroscopic profiles of the densities for slow and fast particles (top panels), macroscopic profile of the total density (central panels), and the currents (bottom panels). Here,  $\rho_{(L,0)}=2,\,\rho_{(L,1)}=6,\,\rho_{(R,0)}=4$ and $\rho_{(R,1)}=2, \Upsilon=1$. For the panels in the left column, $\epsilon=1$ and for the panels in the right column, $\epsilon=0.001$.}
\label{fig:density2}
\end{figure}

\paragraph{The transition between downhill and uphill.}

We observe that for the choice of reservoir parameters $\rho_{L,1} =6, \rho_{R,0}=4$ and $\rho_{L,0} =\rho_{R,1} = 2$, the change from downhill to uphill diffusion occurs at $\epsilon = \tfrac{|\rho_{(R,0)}-\rho_{(L,0)}|}{|\rho_{(R,1)}-\rho_{(L,1)}|} = \tfrac{1}{2}$. The density profiles and currents are shown in Fig.~\ref{fig:mild_density_2642} for two additional values of $\epsilon$, one in the ``mild'' downhill regime $J>0$ for $\epsilon = 0.75$ (left panels), the other in the ``mild'' uphill regime $J<0$ for $\epsilon = 0.25$ (right panels). In the uphill regime (right panel), i.e., when
$\epsilon=0.75$, the ``mild" non-linearity of the total density profile is already visible, indicating the violation of Fick's law.
\begin{figure}[htbp] 
\begin{subfigure}[b]{0.5\linewidth}
\centering
\includegraphics[width=0.75\linewidth]{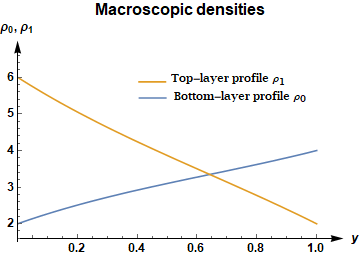}  
\vspace{4ex}
\end{subfigure}
\begin{subfigure}[b]{0.5\linewidth}
\centering
\includegraphics[width=0.75\linewidth]{./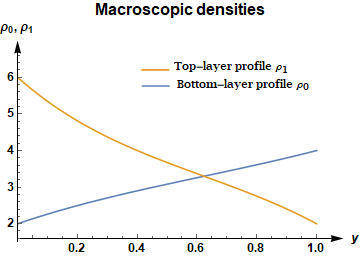}  
\vspace{4ex}
\end{subfigure} 
\begin{subfigure}[b]{0.5\linewidth}
\centering
\includegraphics[width=0.75\linewidth]{./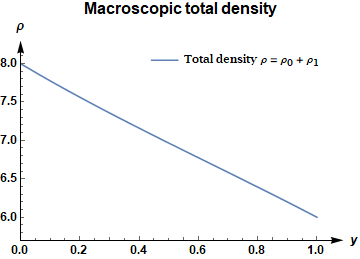} 
\vspace{4ex} 
\end{subfigure}
\begin{subfigure}[b]{0.5\linewidth}
\centering
\includegraphics[width=0.75\linewidth]{./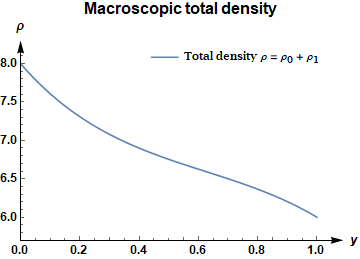} 
\vspace{4ex}
\end{subfigure}
\begin{subfigure}[b]{0.5\linewidth}
\centering
\includegraphics[width=0.75\linewidth]{./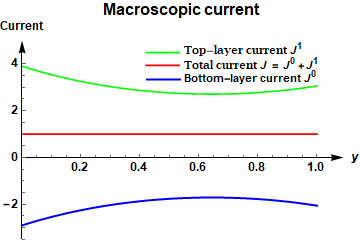} 
\end{subfigure}
\begin{subfigure}[b]{0.5\linewidth}
\centering
\includegraphics[width=0.75\linewidth]{./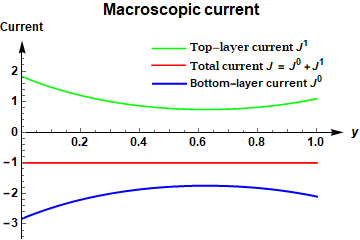} 
\end{subfigure}
\caption{\small Macroscopic profiles of the densities for slow and fast particles (top panels), macroscopic profile of the total density (central panels), and the currents (bottom panels) in the ``mild" downhill and the ``mild" uphill regime. Here,  $\rho_{(L,0)}=2,\,\rho_{(L,1)}=6,\,\rho_{(R,0)}=4$ and $\rho_{(R,1)}=2, \Upsilon=1$. For the panels in the left column, $\epsilon=0.75$ and for the panels in the right column, $\epsilon=0.25$.}
\label{fig:mild_density_2642}
\end{figure}

\paragraph{Identification of the uphill regime.}
We define the notion of uphill current below and identify the parameter ranges for which uphill diffusion occurs.
\begin{definition}{\bf [Uphill diffusion]}
\rm{For parameters $\rho_{(L,0)},\rho_{(L,1)},\rho_{(R,0)},\rho_{(R,1)}$ and $\epsilon>0,$ we say the system has an uphill current in stationarity if the total current $J$ and the difference between the total density of particles in the right and the left side of the system given by $\rho_R - \rho_L$ have the same sign, where it is understood that $\rho_R = \rho_{(R,0)}+\rho_{(R,1)}$ and $\rho_L = \rho_{(L,0)}+\rho_{(L,1)}$.}
\hfill$\spadesuit$
\end{definition}

\begin{proposition}{\bf [Uphill regime]}
Let $a_0 : = \rho_{(R,0)}-\rho_{(L,0)}$ and $a_1:= \rho_{(R,1)}-\rho_{(L,1)}$. Then the macroscopic system admits an uphill current in stationarity if and only if
\begin{equation}
\label{eq:manifold of uphill current}
a_0^2+(1+\epsilon)\,a_0a_1+\epsilon a_1^2 < 0.
\end{equation}
If, furthermore, $\epsilon\in [0,1]$, then
\begin{itemize}
\item[(i)] either 
$$a_0+a_1 > 0 \text{ with } a_0<0,\,a_1> 0$$ 
or 
$$a_0+a_1 < 0 \text{ with } 
a_0 > 0,\,a_1 < 0,$$
\item[(ii)] $\epsilon\in \big[0,-\tfrac{a_0}{a_1}\big]$.
\end{itemize} 
\end{proposition}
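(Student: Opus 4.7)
The plan is to unpack the definition of uphill diffusion using the explicit formula for the total macroscopic current that has already been derived. By the last displayed formula in Section~\ref{ss.statcurrent}, the total current is constant in $y$ and equals
\[
J = -\big[(\rho_{R,0}-\rho_{L,0})+\epsilon(\rho_{R,1}-\rho_{L,1})\big] = -(a_0+\epsilon a_1),
\]
while the difference of boundary total densities is $\rho_R-\rho_L = a_0+a_1$. The definition of uphill diffusion asks that $J$ and $\rho_R-\rho_L$ share the same sign, which I would rewrite as the single product inequality
\[
J\cdot(\rho_R-\rho_L) > 0 \quad\Longleftrightarrow\quad (a_0+\epsilon a_1)(a_0+a_1) < 0.
\]
Expanding the left-hand side immediately gives $a_0^2+(1+\epsilon)a_0 a_1+\epsilon a_1^2 < 0$, which is \eqref{eq:manifold of uphill current}. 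So the first part of the proposition is just algebra; the real content is the factorisation $a_0^2+(1+\epsilon)a_0a_1+\epsilon a_1^2=(a_0+a_1)(a_0+\epsilon a_1)$, which I would record explicitly because it also drives the case analysis for (i)--(ii).

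To deduce (i) and (ii) under the extra assumption $\epsilon\in[0,1]$, I would split according to which of the two factors is negative. In Case A, $a_0+a_1>0$ and $a_0+\epsilon a_1<0$. If $a_1\le 0$, then $\epsilon a_1\ge a_1$ (since $\epsilon\le 1$), so $a_0+\epsilon a_1\ge a_0+a_1>0$, a contradiction; hence $a_1>0$, which combined with $a_0<-\epsilon a_1\le 0$ forces $a_0<0$, and solving $a_0+\epsilon a_1<0$ for $\epsilon$ yields $\epsilon<-a_0/a_1$. Case B, where $a_0+a_1<0$ and $a_0+\epsilon a_1>0$, is the mirror image: an analogous argument shows $a_1<0$, $a_0>0$, and $\epsilon<-a_0/a_1$ (the inequality flips when dividing by the negative $a_1$). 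Together with $\epsilon\ge 0$, these two cases produce exactly the dichotomy in (i) and the interval in (ii).

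The only step that requires a little care is keeping the direction of inequalities straight when dividing by $a_1$, whose sign differs between the two cases; I would handle this by first rewriting $a_0+\epsilon a_1\lessgtr 0$ as $\epsilon a_1 \lessgtr -a_0$ and then dividing, paying attention to the sign of $a_1$ already fixed in the preceding step. No further analytic input is needed, since both the formula for $J$ and the constancy of the total current across the bulk are already established earlier in the paper.
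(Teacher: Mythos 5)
Your proposal is correct and follows essentially the same route as the paper: both reduce the uphill condition via the explicit constant total current $J=-(a_0+\epsilon a_1)$ to the sign condition $(a_0+a_1)(a_0+\epsilon a_1)<0$, expand to get \eqref{eq:manifold of uphill current}, and then do a case analysis on the two factors under $\epsilon\in[0,1]$. Your treatment of the case analysis is in fact slightly more detailed than the paper's (which merely lists the four sign patterns and discards two), and your handling of the division by $a_1$ is sound.
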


\begin{proof}
Note that, by \eqref{eq: average macroscopic current}, there is an uphill current if and only if $a_0 + a_1$ and $a_0 + \epsilon a_1$ have opposite signs. In other words, this happens if and only if
$$
(a_0+a_1)(a_0+\epsilon\,a_1) = a_0^2+(1+\epsilon)\,a_0a_1+\epsilon a_1^2<0.
$$
The above constraint forces $a_0a_1<0$. Further simplification reduces the parameter regime to the following four cases:
\begin{itemize}
\item $a_0+a_1 > 0$ with $a_0<0,\,a_1> 0$ and $\epsilon<-\tfrac{a_0}{a_1}$,
\item $a_0+a_1 < 0$ with $a_0>0,\,a_1< 0$ and $\epsilon<-\tfrac{a_0}{a_1}$,
\item $a_0+a_1 > 0$ with $a_0>0,\,a_1< 0$ and $\epsilon>-\tfrac{a_0}{a_1}$,
\item $a_0+a_1 < 0$ with $a_0<0,\,a_1> 0$ and $\epsilon>-\tfrac{a_0}{a_1}$.
\end{itemize}
Under the assumption $\epsilon\in[0,1]$, only the first two of the above four cases survive.
\end{proof}


\subsection{The width of the boundary layer}

We have seen that for $\epsilon=0$ the microscopic density profile of the fast particles $\theta_0(x)$ linearly interpolates between $\rho_{L,0}$ and $\rho_{R,0}$, whereas the density profile of the slow particles satisfies $\theta_1(x) = \theta_0(x)$ for all $x\in\{2,\ldots,N-1\}$. In the macroscopic setting this produces a continuous macroscopic profile $\rhosz(y) = \rho_{L,0} + (\rho_{R,0}-\rho_{L,0})y$ for the bottom-layer, while the top-layer profile develops two discontinuities at the boundaries when either $\rho_{(L,0)}\neq\rho_{(L,1)}$ or $\rho_{(R,0)}\neq \rho_{(R,1)}$. In particular,
$$
\rhoso(y)\to \big[\rho_{L,0} + (\rho_{R,0}-\rho_{L,0})y\big]\, \mathbf{1}_{(0,1)}(y) 
+ \rho_{L,1} \mathbf{1}_{\{1\}}(y) +  \rho_{R,1} \mathbf{1}_{\{0\}}(y),\quad\epsilon\downarrow 0,
$$
for $y\in [0,1]$. For small but positive $\epsilon,$ the curve is smooth and the discontinuity is turned into a boundary layer. In this section we investigate the width of the left and the right boundary layers as $\epsilon\downarrow 0$. To this end, let us define
\begin{equation}
	\label{eq:strength of discontinuity}
W_L:=|\rho_{(L,0)}-\rho_{(L,1)}|,\quad W_R:=|\rho_{(R,0)}-\rho_{(R,1)}|.
\end{equation}
Note that, the profile $\rho_1$ develops a left boundary layer if and only if $W_L > 0$ and, similarly, a right boundary layer if and only if  $W_R > 0$. 
\begin{definition}
\label{def:width of boundary layers}
\rm{We say that the \emph{left boundary layer} is of size $f_L(\epsilon)$ if there exists $C>0$ such that, for any $c>0,$ 
$$\lim_{\epsilon\downarrow 0} \frac{R_L(\epsilon, c)}{f_L(\epsilon)}=C,$$
where $R_L(\epsilon, c)=\sup\left \{  y\in\big(0,\tfrac{1}{2}\big): \left|\frac{d^2}{d y^2}\rhoso(y)\right| \ge c\right \}$. Analogously, we say that the \emph{right boundary layer} is of size $f_R(\epsilon)$ if there exists $C>0$ such that, for any $c>0,$ 
$$\lim_{\epsilon\downarrow 0} \frac{1-R_R(\epsilon, c)}{f_R(\epsilon)}=C,$$
where $R_R(\epsilon, c)=\inf\left \{  y\in\big(\tfrac{1}{2},1\big): \left|\frac{d^2}{d y^2}\rhoso(y)\right|\geq c\right \}$.}
\end{definition}
\noindent
The widths of the two boundary layers essentially measure the deviation of the top-layer density profile (and therefore also the total density profile) from the bulk linear profile corresponding to the case $\epsilon = 0$. In the following proposition we estimate the sizes of the two boundary layers.
\begin{proposition}{\bf[Width of boundary layers]}
The widths of the two boundary layers are given by
\begin{equation}
	f_L(\epsilon) = f_R(\epsilon) = \sqrt{\epsilon}\,\log(1/\epsilon),
\end{equation}
where $f_L(\epsilon),f_R(\epsilon)$ are defined as in Definition~\ref{def:width of boundary layers}.
\end{proposition}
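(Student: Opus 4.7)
The plan is to work directly from the explicit formula \eqref{eq:macroscopic profile top layer e>0} for $\rhoso$. Differentiating twice, the linear part vanishes and what is left is
\[
\frac{d^2}{dy^2}\rhoso(y) = \frac{B_{\epsilon,\Upsilon}^2}{1+\epsilon}\left[\frac{\sinh[B_{\epsilon,\Upsilon}(1-y)]}{\sinh[B_{\epsilon,\Upsilon}]}(\rho_{L,1}-\rho_{L,0}) + \frac{\sinh[B_{\epsilon,\Upsilon}y]}{\sinh[B_{\epsilon,\Upsilon}]}(\rho_{R,1}-\rho_{R,0})\right].
\]
Since $B_{\epsilon,\Upsilon}=\sqrt{\Upsilon(1+\epsilon)/\epsilon}\to\infty$ as $\epsilon\downarrow 0$, I would rewrite the two sinh ratios in the form $e^{-B_{\epsilon,\Upsilon}y}(1+O(e^{-2B_{\epsilon,\Upsilon}(1-y)}))/(1-e^{-2B_{\epsilon,\Upsilon}})$ and $e^{-B_{\epsilon,\Upsilon}(1-y)}(1+O(e^{-2B_{\epsilon,\Upsilon}y}))/(1-e^{-2B_{\epsilon,\Upsilon}})$. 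Both correction factors tend to $1$ uniformly in $y\in[0,1]$.

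For the left boundary layer I restrict to $y\in(0,1/2]$; then the second exponential is bounded by $e^{-B_{\epsilon,\Upsilon}/2}$, hence negligible compared to the first as soon as $W_L>0$. So, uniformly on $(0,1/2]$,
\[
\left|\frac{d^2}{dy^2}\rhoso(y)\right| = \frac{B_{\epsilon,\Upsilon}^2 W_L}{1+\epsilon}\, e^{-B_{\epsilon,\Upsilon}y}\,(1+o(1)).
\]
Because the right-hand side is strictly decreasing in $y$, the super-level set appearing in Definition~\ref{def:width of boundary layers} is, for small enough $\epsilon$, an interval of the form $(0,y^*(\epsilon,c)]$, so $R_L(\epsilon,c)=y^*(\epsilon,c)$ is the unique solution of
\[
c(1+\epsilon) = B_{\epsilon,\Upsilon}^2 W_L\,e^{-B_{\epsilon,\Upsilon}y^*}\,(1+o(1)).
\]
Inverting and using $B_{\epsilon,\Upsilon}^2=\Upsilon(1+\epsilon)/\epsilon$ and $1/B_{\epsilon,\Upsilon}=\sqrt{\epsilon/(\Upsilon(1+\epsilon))}$ yields
\[
R_L(\epsilon,c) = \sqrt{\frac{\epsilon}{\Upsilon(1+\epsilon)}}\left[\log(1/\epsilon)+\log\frac{\Upsilon W_L}{c}+o(1)\right].
\]
Dividing by $\sqrt{\epsilon}\log(1/\epsilon)$ and letting $\epsilon\downarrow 0$ then gives the limit $1/\sqrt{\Upsilon}$, crucially independent of $c$, which establishes $f_L(\epsilon)=\sqrt{\epsilon}\log(1/\epsilon)$ with constant $C=1/\sqrt{\Upsilon}$.

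The right boundary layer follows by the symmetry $y\mapsto 1-y$, with $W_L$ replaced by $W_R$: now the second sinh ratio dominates on $[1/2,1)$, and the same inversion yields $1-R_R(\epsilon,c)\sim(1/\sqrt{\Upsilon})\,\sqrt{\epsilon}\log(1/\epsilon)$. I do not anticipate a serious obstacle; the only point requiring care is that the subleading $\log(\Upsilon W_L/c)$ and $\log(\Upsilon W_R/c)$ terms are absorbed by the leading $\log(1/\epsilon)$ so that the limiting constant $C$ is genuinely $c$-independent, and implicitly that the statement presumes $W_L,W_R>0$ (when one of them vanishes the corresponding super-level set is empty for small $\epsilon$ and there is no boundary layer to measure, consistent with the discussion preceding the proposition).
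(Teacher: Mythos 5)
Your proposal is correct and follows essentially the same route as the paper: both reduce the condition $\bigl|\frac{d^2}{dy^2}\rhoso(y)\bigr|\ge c$ to the inequality $\sinh\left[B_{\epsilon,\Upsilon}(1-y)\right]\ge \frac{c\epsilon}{\Upsilon W_L}\sinh\left[B_{\epsilon,\Upsilon}\right]$ (and its mirror image at the right end), invert it via $\sinh^{-1}x=\log\left(x+\sqrt{x^2+1}\right)$, and extract the leading term $B_{\epsilon,\Upsilon}^{-1}\log(1/\epsilon)$, with the $c$- and $W_L$-dependent contributions surviving only at order $\sqrt{\epsilon}$, so that the limit $1/\sqrt{\Upsilon}$ is $c$-independent. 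The only cosmetic differences are that you obtain the second derivative by differentiating \eqref{eq:macroscopic profile top layer e>0} directly rather than via the stationary identity $\epsilon\Delta\rho_1=\Upsilon(\rho_1-\rho_0)$ (which yields the identical inequality since $B_{\epsilon,\Upsilon}^2/(1+\epsilon)=\Upsilon/\epsilon$), and that you keep both $W_L,W_R>0$ and bound the cross term by $e^{-B_{\epsilon,\Upsilon}/2}$ on $(0,\tfrac12]$, whereas the paper simply sets $W_R=0$ when computing $f_L$ (and vice versa).
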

\begin{proof}
	Note that, to compute $f_L(\epsilon)$, it suffices to keep $W_L>0$ fixed and put $W_R = 0$, where $W_L,W_R$ are as in \eqref{eq:strength of discontinuity}.
	Let $\overline{y}(\epsilon, c)\in(0,\tfrac{1}{2})$ be such that, for some constant $c>0$,
\begin{equation}
\left|\frac{d^2}{d y^2}\rhoso(y)\right| \geq c,
\end{equation}
or equivalently, since $\epsilon\Delta\rho_1 =\Upsilon(\rho_1-\rho_0)$,
\begin{equation}
|\rhoso(y)-\rhosz(y)|\geq\frac{c\epsilon}{\Upsilon}.
\end{equation}
Recalling the expressions of $\rhosz(\cdot)$ and $\rhoso(\cdot)$ for positive $\epsilon$ given in \eqref{eq:macroscopic profile bottom layer e>0}$-$\eqref{eq:macroscopic profile top layer e>0}, we get
\begin{equation}\label{eq:general equation for width}
\left|\frac{\sinh\big[\sqrt{\Upsilon(1+\tfrac{1}{\epsilon})}(1-y)\big]}{\sinh\big[\sqrt{\Upsilon(1+\tfrac{1}{\epsilon})}\big]}\,(\rho_{(L,0)}-\rho_{(L,1)})+\frac{\sinh\big[\sqrt{\Upsilon(1+\tfrac{1}{\epsilon})}\,y\big]}{\sinh\big[\sqrt{\Upsilon(1+\tfrac{1}{\epsilon})}\big]}\,(\rho_{(R,0)}-\rho_{(R,1)})\right|\geq\frac{c\epsilon}{\Upsilon}.
\end{equation}
Using \eqref{eq:strength of discontinuity} plus the fact that $W_R = 0,$ and setting $B_{\epsilon,\Upsilon}:=\sqrt{\Upsilon\left(1+\tfrac{1}{\epsilon}\right)}$, we see that
\begin{equation}
	\label{eq:left width of top layer current}
	\sinh\left[B_{\epsilon,\Upsilon}(1-y)\right]\geq\frac{c\epsilon }{\Upsilon W_L}\sinh\left[B_{\epsilon,\Upsilon}\right].
\end{equation}
Because $\sinh(\cdot)$ is strictly increasing, \eqref{eq:left width of top layer current} holds if and only if
\begin{equation}
\overline{y}(\epsilon, c) \leq 1 - \frac{1}{B_{\epsilon,\Upsilon}}\sinh^{-1}\left[\frac{c\epsilon}{\Upsilon W_L}\sinh
\left(\tfrac{B_{\epsilon,\Upsilon}}{2}\right)\right].
\end{equation}
Thus, for small $\epsilon >0$ we have
\begin{equation}
	R_L(\epsilon,c) = 1 - \frac{1}{B_{\epsilon,\Upsilon}}\sinh^{-1}\left[\frac{c\epsilon}{\Upsilon W_L}\sinh
	\left(\tfrac{B_{\epsilon,\Upsilon}}{2}\right)\right],
\end{equation}
where $R_L(\epsilon, c)$ is defined as in Definition~\ref{def:width of boundary layers}. Since $\sinh^{-1} x = \log(x+\sqrt{x^2+1})$ for $x\in\R$, we obtain
\begin{equation}
	\label{eq:solution of width of top layer current}
	\begin{aligned}
		R_L(\epsilon, c) &= \frac{\sqrt{\epsilon}}{\sqrt{\Upsilon(1+\epsilon)}}\log\left[\frac{N_{\epsilon,\Upsilon}+\sqrt{N_{\epsilon,\Upsilon}^2+1}}{\epsilon CN_{\epsilon,\Upsilon}+\sqrt{(\epsilon CN_{\epsilon,\Upsilon})^2+1}}\right]\\
		& = \frac{\sqrt{\epsilon}}{\sqrt{\Upsilon(1+\epsilon)}}\log(1/\epsilon)+\frac{\sqrt{\epsilon}}{\sqrt{\Upsilon(1+\epsilon)}}\log
	\left[\frac{1+\sqrt{1+(1/N_{\epsilon,\Upsilon})^2}}{C+\sqrt{C^2+(1/(\epsilon N_{\epsilon,\Upsilon}))^2}}\right]\\
	&=\frac{\sqrt{\epsilon}}{\sqrt{\Upsilon(1+\epsilon)}}\log(1/\epsilon) + R_{\epsilon,\Upsilon,W_L},
	\end{aligned}
\end{equation}
where $N_{\epsilon,\Upsilon}:=\sinh\Big(\tfrac{B_{\epsilon,\Upsilon}}{2}\Big), C:=\tfrac{c}{\Upsilon W_L}$, and the error term is
$$
R_{\epsilon,\Upsilon,W_L}:=\frac{\sqrt{\epsilon}}{\sqrt{\Upsilon(1+\epsilon)}}\log
\left[\frac{1+\sqrt{1+(1/N_{\epsilon,\Upsilon})^2}}{C+\sqrt{C^2+(1/(\epsilon N_{\epsilon,\Upsilon}))^2}}\right].
$$
Note that, since $\epsilon N_{\epsilon,\Upsilon}\to \infty$ as $\epsilon\downarrow 0$, we have
\begin{equation}
	\label{eq:error term in width}
	\lim\limits_{\epsilon\downarrow 0}\frac{R_{\epsilon,\Upsilon,W_L}}{\sqrt{\epsilon}} = \frac{1}{\sqrt{\Upsilon}}\log(1/C) < \infty.
\end{equation}
Hence, combining \eqref{eq:solution of width of top layer current}$-$\eqref{eq:error term in width}, we get 
\begin{equation}
	\label{eq:solution of left width}
	\lim\limits_{\epsilon\downarrow 0}\frac{R_L(\epsilon, c)}{\sqrt{\epsilon}\log(1/\epsilon)} = \lim\limits_{\epsilon\downarrow 0}\frac{1}{\sqrt{\Upsilon}(1+\epsilon)}+\lim\limits_{\epsilon\downarrow 0}\frac{R_{\epsilon,\Upsilon,W_L}}{\sqrt{\epsilon}\log(1/\epsilon)} = \frac{1}{\sqrt{\Upsilon}}
\end{equation}
and so, by Definition~\ref{def:width of boundary layers}, $f_L(\epsilon) = \sqrt{\epsilon}\log(1/\epsilon)$.

Similarly, to compute $f_R(\epsilon)$, we first fix $W_L = 0, W_R>0$ and note that, for some $c>0$, we have, by using \eqref{eq:general equation for width},
\begin{equation}
	\label{eq:right width of top layer current}
	|\partial^2\rhoso(y)|\geq c\quad\text{ if and only if }\quad\sinh\left[B_{\epsilon,\Upsilon}\,y\right]\geq\tfrac{c\epsilon }{\Upsilon W_R}\sinh\left[B_{\epsilon,\Upsilon}\right].
\end{equation}
Hence, by appealing to the strict monotonicity of $\sinh(\cdot),$ we obtain
\begin{equation}
	R_R(\epsilon,c)=\inf\left \{  y\in\big(\tfrac{1}{2},1\big): \left|\frac{d^2}{d y^2}\rhoso(y)\right|\geq c\right \} = \frac{1}{B_{\epsilon,\Upsilon}}\sinh^{-1}\left[\frac{c\epsilon}{\Upsilon W_R}\sinh
	\left(\tfrac{B_{\epsilon,\Upsilon}}{2}\right)\right]. 
\end{equation}
Finally, by similar computations as in \eqref{eq:solution of width of top layer current}--\eqref{eq:solution of left width}, we see that
\begin{equation}
	\lim\limits_{\epsilon\downarrow 0}\frac{1-R_R(\epsilon,c)}{\sqrt{\epsilon\log(1/\epsilon)}} = \frac{1}{\sqrt{\Upsilon}}
\end{equation} and hence $f_R(\epsilon)=\sqrt{\epsilon}\log(1/\epsilon)$. 
\end{proof}

\appendix


\section{Inverse of the boundary-layer matrix}
\label{s.app*}

The inverse of the matrix $M_\epsilon$ defined in \eqref{eq:matrix_def} is given by ($\alpha_1$ and $\alpha_2$ are as in \eqref{eq:definition of two roots})
\begin{equation}
\label{eq:inverse of M_epsilon}
M_\epsilon^{-1}:=
\frac{1}{Z}
\begin{bmatrix}
-m_{13} & -m_{14} & m_{13} & m_{14}\\
m_{21} & m_{22} & m_{23} & m_{24}\\
m_{31}(\alpha_2) & m_{32}(\alpha_2) & m_{33}(\alpha_2) & m_{34}(\alpha_2)\\
-m_{31}(\alpha_1) & -m_{32}(\alpha_1) & -m_{33}(\alpha_1) & -m_{34}(\alpha_1)
\end{bmatrix}
,
\end{equation}
where
\begin{equation}
\begin{aligned}
Z &:= \alpha_1^{N + 1} [\alpha_2 (1 - \epsilon) (\alpha_2^{N - 1} + 1) + 
2 \epsilon (\alpha_2^{N + 1} + 1)]\,[\alpha_2 (1 + N) (1 - \epsilon) (\alpha_2^{N - 1} - 
1) + 2 \epsilon (N + \epsilon) (\alpha_2^{1 + N} - 1)],\\
m_{13} &:=\alpha_1^{N + 1}[\alpha_2 (1 - \epsilon) (\alpha_2^{N - 1} + 1) + 
2 \epsilon (\alpha_2^{N + 1} + 1)]\,[\alpha_2 (1 - \epsilon) (\alpha_2^{N - 1} - 1) + \epsilon(\alpha_2^{N + 1} - 1)],\\
m_{14} &:=\epsilon\, \alpha_1^{
	N + 1}[\alpha_2 (1 - \epsilon) (\alpha_2^{N - 1} + 1) + 
2 \epsilon (\alpha_2^{N + 1} + 1)]\,(\alpha_2^{N+1} - 1),\\
m_{21} &:= (1 + N) (1 - \epsilon)^2 (\alpha_2^{N-1}-\alpha_1^{N-1})-\epsilon(1 - \epsilon)^2(\alpha_2 - 
\alpha_1)\\
&\qquad + \epsilon^2 (1 + 2 N + \epsilon)(\alpha_2^{N+1}-\alpha_1^{N+1}) 
+ \epsilon(1 - \epsilon)(2 + 3 N + \epsilon)(\alpha_2^N-\alpha_1^N),\\
m_{22} &:= \epsilon\,[(1 - \epsilon)(1 + N)(\alpha_2^N-\alpha_1^N) 
+ \epsilon (1 + 2 N + \epsilon)(\alpha_2^{N+1}-\alpha_1^{N+1})],\\
m_{23} &:= \epsilon\,(1 - \epsilon) [(N + \epsilon)(\alpha_2 - \alpha_1) 
- (1 - \epsilon)(\alpha_2^N - \alpha_1^N) - \epsilon (\alpha_2^{N+1} - \alpha_1^{N+1})],\\
m_{24}&:= -\epsilon (1 - \epsilon) [(1 + N) (\alpha_2 - \alpha_1) + \epsilon\,(\alpha_2^{N+1} - \alpha_1^{N+1})],
\end{aligned}
\end{equation}
and the polynomials $m_{31}(z),m_{32}(z),m_{33}(z),m_{34}(z)$ are defined as
\begin{equation}
\begin{aligned}
m_{31}(z) &:= -(1 -\epsilon)^2\,z - \epsilon\,(1 - \epsilon) + 
(1 - \epsilon) (N + \epsilon)\,z^N -
\epsilon (1 - 2 N - 3 \epsilon)\,z^{N+1},\\
m_{32}(z) &:= -(1 - \epsilon)(1 + N) z^N - \epsilon\,(1 - \epsilon) - 
\epsilon (1 + 2 N + \epsilon)\,z^{N+1},\\
m_{33}(z) &:= (1 -\epsilon)^2\,z^N + \epsilon\,(1 - \epsilon)\,z^{N+1} - 
(1 - \epsilon) (N + \epsilon)\,z +
\epsilon (1 - 2 N - 3 \epsilon),\\
m_{34}(z) &:= (1 + N)(1 - \epsilon)\,z + 
\epsilon\,(1 - \epsilon)\,z^{N+1} + \epsilon (1 + 2 N + \epsilon).
\end{aligned}
\end{equation}
We remark that most of the terms appearing in the inverse simplify because of \eqref{eq:product of two roots}. We define the four vectors $\vec{c}_1,\vec{c}_2,\vec{c}_3,\vec{c}_4$ as the respective rows of $M_\epsilon^{-1}$, i.e.,
\begin{equation}
\label{eq:definition of c1,c2,c3,c4}
\begin{aligned}
\vec{c}_1 &:= (M_\epsilon^{-1})^T \vec{e}_1,\quad
\vec{c}_2 := (M_\epsilon^{-1})^T \vec{e}_2,\\
\vec{c}_3 &:= (M_\epsilon^{-1})^T \vec{e}_3,\quad
\vec{c}_4 := (M_\epsilon^{-1})^T \vec{e}_4,
\end{aligned}
\end{equation}
where 
$$
\begin{aligned}
&\vec{e}_1 := \big[
\begin{array}{c c c c}
1 & 0 & 0 & 0
\end{array}
\big]^T,& 
&\vec{e}_{2} :=\big[
\begin{array}{c c c c}
0 & 1 & 0 & 0
\end{array}
\big]^T,\\
&\vec{e}_{3} := \big[
\begin{array}{c c c c}
0 & 0 & 1 & 0
\end{array}
\big]^T,& 
&\vec{e}_{4} := \big[
\begin{array}{c c c c}
0 & 0 & 0 & 1
\end{array}
\big]^T.
\end{aligned}
$$

\end{document}